\newtheorem*{theorem*}{Theorem}
\newtheorem{theorem}{Theorem}[section]
\newtheorem{lemma}[theorem]{Lemma}
\newtheorem{proposition}{Proposition}[section]
\newtheorem{corollary}[proposition]{Corollary}
\theoremstyle{definition}
\newtheorem{definition}[theorem]{Definition}
\newtheorem{example}[theorem]{Example}
\theoremstyle{remark}
\newtheorem{remark}[theorem]{Remark}
\newtheorem{remarks}[proposition]{Remarks}
\newcommand{\Hom}{{\rm Hom}}
\newcommand{\Ext}{{\rm Ext}}
\newcommand{\im}{{\rm Im}\,}
\newcommand{\Id}{{\rm Id}\,}
\numberwithin{equation}{section}
\title{The structure of simply colored coalgebras}
\author{Yang Mo}
\address{Department of Mathematics, Purdue University, West Lafayette, IN 47907}
\begin{document}
	\subjclass[2020]{Primary 16T15; Secondary 18M70}
	\maketitle

	\begin{abstract} 
	    A simply colored coalgebra is a coassociative counital coalgebra $C$ over an arbitrary ring $R$, which can be decomposed into a direct sum of two $R$-modules: one generated by set-like elements and another consisting of conilpotent elements. Our main result is the equivalence between simply colored coalgebras over a field $k$ and pointed coalgebras with a choice of splitting of its coradical. Additionally, we also prove that the category of simply colored coalgebras is both complete and cocomplete.
	\end{abstract}
	
	\section*{Introduction}
         The aim of this paper is to delve into the subtleties in \cite{kaufmann_mo_2022} and to extend its results. To begin, we briefly discuss the initial motivation for this collaborative work with R. Kaufmann.

        In \cite{MayMoreConcise}, Peter May constructs an component coalgebra in the following sense: Let $C_{g} = Rg \oplus \overline{C}_g$ and $\overline{C}_g = \{x | \Delta(x) = x \otimes g + x' \otimes x'' + g \otimes x, \deg(x') > 0 \text{ and } \deg(x'') > 0\}$ and $g$ is a set-like  \footnote{In this paper, we adopt the unconventional term \textbf{``set-like"} instead of the commonly used term \textbf{``group-like"} found in \cite{MayMoreConcise} \cite{Susan}, \cite{DR2}, \cite{Sweedler}, in order to maintain consistency.} element in $C_g$,. In essence, $C_{g}$ is a connected graded coalgebra over $R$ that lacks primitive elements and is indexed by the unique set-like element in $g$. Given such a family $\{C_g\}$ of them, we define the component coalgebra as the direct sum of coalgebras over the family, i.e. 
        $$C = \oplus_{g} C_{g}$$
        This construction is motivated from the study of loop spaces. It should be noted that each $g$ is also a set-like element in the direct sum $C$.
        A component bialgebra refers to a bialgebra in which its coalgebra is a component coalgebra. If the bialgebra is group-like, meaning every set-like element is invertible under the multiplication, then the bialgebra becomes a Hopf algebra.

        At a certain point in our study of Hopf algebras obtained from Feynman categories, we realized the need for a more general formulation. In a broader sense, the components should be indexed by two potentially distinct set-like elements, denoted as $g$ and $h$, within $C$. In other words, we have
        $$C = \oplus_{g, h}C_{g, h}$$

The good news is that pointed coalgebras (not necessarily graded) fulfill the required property, as shown in \cite{DR2}.
Although each $C_{g,h}$ may not be a coaugmented coalgebra, the existence of an antipode for a pointed coalgebra $C$ remains the same: every set-like element is invertible. However, the bad news is that the notion of pointed coalgebras is not well-defined when the ground ring is not a field. This difficulty arises from the fact that the notion of subcoalgebras may not be well-behaved.

To address this challenge, we introduce the concept of a simply colored coalgebra. It is defined as a triple $(C, G, \delta)$, where $C$ is a coalgebra, $G$ is a set of set-like elements in $C$ (this will be called colors), and $\delta: C \to C[G]$ is a coalgebra retraction that maps $C$ onto the set-like coalgebra $C[G]$. This decomposition allows us to express $C$ as the direct sum $C = C[G] \oplus \ker{\delta}$.

The simply colored coalgebra can be defined over any commutative rings with unity, such as $\mathbb{Z}$. Furthermore, we demonstrate that any non-negatively graded coalgebra with its zero degree freely spanned by set-like elements can be regarded as a simply colored coalgebra. In particular, a connected graded coalgebra is simply colored.

This framework enables us to generalize the results presented in \cite{IRA1} and \cite{IRA2}. For further details, We refer readers to \cite{kaufmann_mo_2022}, which uses different terminology and introduces additional variants.

In this paper, we present the following results to expand upon the topics discussed in \cite{kaufmann_mo_2022}. To be more precise, 

\begin{enumerate}
    \item The reduced comultiplication can be defined more simply in terms of a retraction (Theorem \ref{1}). This allows us to extend \cite[Lemma 2.19]{kaufmann_mo_2022} to any comonoid in an monoidal abelian category. This demonstrates that the use of set-like coalgebras is not necessary in the argument and that the coassociativity of the reduced comultiplication can be proved without it. 
    \item A pointed coalgebra with a splitting is a pointed coalgebra $C$ equipped with an additional coalgebra homomorphism $\delta$ onto its coradical, such that $C$ can be decomposed as $C = C_0 \oplus \ker \delta$. We show that a simply colored coalgebra over a field is equivalent to a pointed coalgebra with a splitting.
    More precisely,
 \begin{theorem*}[\ref{11}] {\ }
     Let $k$ be a field, 
     \begin{enumerate}
         \item Every simply colored coalgebra $(C, G, \delta)$ over $k$ can be viewed as a pointed coalgebra with a splitting $(C, \delta)$.
         \item Conversely, if $(C, \delta)$ is a pointed coalgebra with a splitting, it naturally gives rise to a simply colored coalgebra structure $(C, G, \delta)$, where $G$ is the set of all set-like elements in $C$.
     \end{enumerate}
 \end{theorem*}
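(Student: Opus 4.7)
The plan is to prove the two implications separately. In direction (1), the substantive task is to identify $G$ with the set of all set-like elements of $C$ and to establish that $C$ is pointed with coradical $C[G]$; in direction (2), the core work is to verify the conilpotence of $\ker\delta$ by induction along the coradical filtration.

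For (1), given a simply colored triple $(C, G, \delta)$ over $k$, I would first show that $G$ equals the set of set-like elements of $C$. Fix a set-like $x \in C$ and decompose $x = x_1 + x_2$ along $C = C[G]\oplus\ker\delta$. Applying $\delta$ to $\Delta(x) = x\otimes x$ yields $\Delta(\delta(x)) = \delta(x)\otimes\delta(x)$, and $\epsilon(\delta(x)) = \epsilon(x) = 1$, so $\delta(x)$ is a nonzero set-like element of $C[G]$; hence $\delta(x) = g$ for some $g \in G$, and $x_1 = \delta(x) = g$. Isolating $\Delta(x_2) = \Delta(x) - g\otimes g$ and applying the projection $p = \mathrm{id} - \delta$ to each tensor factor gives $\bar\Delta(x_2) = x_2\otimes x_2$; an immediate induction then yields $\bar\Delta^{(n)}(x_2) = x_2^{\otimes(n+1)}$ for every $n$, and conilpotence of $x_2$ forces $x_2 = 0$, so $x = g \in G$. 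Next I would argue that every simple subcoalgebra $S \subseteq C$ lies in $C[G]$. By Sweedler's theorem $S$ is finite-dimensional, and since $S^*$ is a simple algebra the only coideals of $S$ are $0$ and $S$; thus $\delta|_S$ is either zero or injective. The zero case is impossible because then $\epsilon_C|_S = \epsilon_{C[G]}\circ\delta|_S$ would vanish, violating the counit axiom on $S \neq 0$. Hence $\delta|_S$ embeds $S$ into the cosemisimple coalgebra $C[G]$, the image is a one-dimensional simple subcoalgebra $kg$, and $S$ itself is spanned by a set-like element of $C$, which by the preceding step lies in $G$. Therefore $S = kg \subseteq C[G]$, $C_0 = C[G]$, and $(C,\delta)$ is a pointed coalgebra with splitting.

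For (2), given a pointed coalgebra $(C, \delta)$ with splitting and $G$ the set of all set-likes of $C$, pointedness yields $C_0 = C[G]$, and $\delta : C \to C[G]$ is a coalgebra retraction with $C = C[G]\oplus\ker\delta$ by hypothesis. The only nontrivial point is that every element of $\ker\delta$ is conilpotent for $\bar\Delta = (p\otimes p)\Delta$, where $p = \mathrm{id} - \delta$. I would proceed by strong induction on $n$ to show that $\bar\Delta^{(n)}$ annihilates $C_n$. The key observations are $p(C_0) = 0$ and $p(C_i) \subseteq C_i$ (since $\delta(C_i) \subseteq C_0 \subseteq C_i$); together with $\Delta(C_n) \subseteq \sum_{i+j=n} C_i\otimes C_j$, this gives
\[
\bar\Delta(C_n) \subseteq \sum_{\substack{i,j\geq 1 \\ i+j=n}} (C_i\cap\ker\delta)\otimes (C_j\cap\ker\delta),
\]
so each tensor factor of $\bar\Delta(x)$ for $x \in C_n$ sits at a strictly lower filtration level. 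Iterating and using that the coradical filtration exhausts $C$, every element of $\ker\delta$ is conilpotent, yielding the simply colored structure $(C, G, \delta)$.

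The main obstacle, as I see it, is the bookkeeping in the induction of part (2): one has to track both the filtration level and the $\ker\delta$-membership of the tensor factors of $\bar\Delta^{(n)}(x)$, and verify that the inductive hypothesis applies at the correct stage. Once that is handled, the rest reduces to standard facts about the coradical filtration and about simple coalgebras over a field.
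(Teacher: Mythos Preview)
Your argument for part (2) is correct and is essentially the paper's approach: both show $\bar\Delta(C_n\cap\ker\delta)\subseteq (C_{n-1}\cap\ker\delta)\otimes(C_{n-1}\cap\ker\delta)$ and induct along the coradical filtration. Your treatment is in fact more self-contained than the paper's, which cites \cite{Susan} for this inclusion.

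Part (1), however, has a genuine gap. You assert that for a simple coalgebra $S$, ``since $S^*$ is a simple algebra the only coideals of $S$ are $0$ and $S$.'' This is false: coideals of a finite-dimensional coalgebra $S$ correspond to \emph{unital subalgebras} of $S^*$, not to two-sided ideals, and a simple algebra such as $M_n(k)$ has many proper unital subalgebras. Concretely, for $S=M_n^c(k)$ with $n\ge 2$ the subspace $\ker\epsilon_S$ is a coideal that is neither $0$ nor $S$. So nothing prevents $\delta|_S$ from having a nontrivial kernel a priori, and your dichotomy ``$\delta|_S$ is zero or injective'' is unjustified.

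The deeper issue is that your argument at this step makes no use of conilpotence, yet conilpotence is essential. If one drops it, there exist triples $(C,G,\delta)$ with a retraction onto $C[G]$ but with simple subcoalgebras of dimension $>1$ (e.g.\ $C=kg\oplus M_2^c(k)$ with $\delta(e_{ij})=\delta_{ij}g$); the conclusion then fails. The paper closes this gap differently: it uses conilpotence via Proposition~\ref{3} to show that $\{\wedge^n C[G]\}_{n\ge 0}$ is an exhaustive coalgebra filtration of $C$, and then invokes the standard fact that the coradical lies in the bottom term of any such filtration, giving $C_0\subseteq C[G]$. Your direct attack on simple subcoalgebras could perhaps be repaired, but it would have to route through conilpotence in some form---for instance by intersecting $S$ with the wedge filtration---at which point it converges to the paper's argument anyway.
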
  

In particular, it is worth noting that coaugmented conilpotent coalgebras can be precisely characterized as pointed coalgebras with a splitting where $G$ consists of a single element. While this fact is likely known in the literature (see, for example, \cite{ckd}), the detailed argument supporting this characterization is not commonly found.
    
    \item In the last section, we show the category of pointed coalgebras with a splitting is both complete and cocomplete. It is worth mentioning that this result is independently proven in \cite{ckd}. However, we will present a different approach that does not rely on pseudocompact algebras.
\end{enumerate}

We should highlight that the characterization of reduced comultiplication (as discussed in Theorem \ref{1} and Section \ref{33}) enables us to define a "pointed-coalgebra-like" cooperad. It provides a generalization of the existing notion of "conilpotent coaugmented-like" conilpotent coaugmented cooperads. As a consequence, it could potentially pave the way for a broader generalization of Koszul duality of operads, similar to the approach taken in \cite{ckd}, which may serve as a subject for future research.

\subsection*{Conventions and notations}
We assume that readers are familiar with the definitions and basic examples of coalgebras, bialgebras, and Hopf algebras. Throughout this paper, we work over a unital commutative associative ring $R$ or a field $k$. Given the commutativity of $R$, an $R$-module naturally becomes a bimodule. The tensor product $\otimes$ is considered to be over the ground ring $R$, unless otherwise stated. We use $\rho_l$ (and $\rho_r$) to denote a left coaction (and right coaction). The symbols $\omega_l$ and $\omega_r$ are reserved for coactions induced via retraction, which we will discuss later in the paper. It $M = N \oplus N'$ can be decomposed, then the notation $\pi_N: M \to N$ is used to represent the canonical projection onto the submodule $N$. If the submodule is clear from the context, we omit the subscript $B$ from $\pi_N$. Finally, in an unconventional manner, we will use the term \textbf{set-like} elements to refer to what is commonly known as group-like elements in most literature.
	\subsection*{Acknowledgement}
 
The author would like to express gratitude to R. Kaufmann, M. Rivera, and L. Guo for their valuable suggestions and insightful comments, which greatly contributed to the improvement of the original ideas. I would like to extend my special thanks to the organizers of the AMS special session on higher structures in topology, geometry, and physics, where I had the opportunity to present a preliminary version of this work.
	
	\section{Preliminaries}
	Most of the material in this section can be found in textbooks such as \cite{DR2}, \cite{Susan}, and \cite{Sweedler}. First, we will provide a brief overview of the fundamental concepts
	
	\begin{definition} {\ }
		\begin{enumerate}
			\item A \textit{cosimple} coalgebra is a coalgebra whose subcoalgebras are exactly zero and itself.
			\item A \textit{coradical} of coalgebra is a sum of all its cosimple subcoalgbras.  
			\item A \textit{cosemisimple} coalgebra is a coalgebra which equals to its own coradical. 
		\end{enumerate}
	\end{definition}
 
According to the fundamental theorem of coalgebras, any cosimple coalgebra must be finite-dimensional. Its dual algebra corresponds precisely to a simple algebra (see \cite{Susan}, Chapter 1). While it is commonly referred to as a simple coalgebra in most literature, we will retain the "co-" prefix for consistency. Additionally, it can be proven that the sum of cosimple coalgebras must be a direct sum (see for example \cite{Susan}).
	
	\begin{example} \cite{DR2}
		Here is an cosimple coalgebra over a field. We consider the (standard) matrix coalgebra $M^c_n(k)$ over a field $k$. As a vector space, it is spanned by elements $e_{ij}$ endowed with the following coalgebra structure
		$$\Delta(e_{ij}) = \sum_{1 \leq t \leq n}e_{it} \otimes e_{tj}$$
		$$\epsilon(e_{ij}) = \delta_{ij}$$
		$M^c_n(k)$ as a vector space is finite dimensional, so its linear dual $(M^c_n(k))^*$ is a algebra over $k$. In fact, one can check it is isomorphic to the standard matrix algebra $M_n(k)$ as $$e^*_{ij}e^*_{kl} = \delta_{jk}e^*_{il}$$ Since the matrix algebra $M_n(k)$ is simple (the only ideal is 0 and itself), by duality the only subcoalgebra is 0 and itself. By definition, $M^c_n(k)$ is a simple coalgebra. $M^c_n(k)$ is pointed if and only if $n = 1$, since the dimension of simple pointed coalgebra can only have dimension of 1 and by comparison the dimension of the matrix coalgebra $M^c_n(k)$ is $n^2$ ($n^2 > 1$ for $n > 1$).
	\end{example}
	\begin{definition}
		Let $C$ be a coalgebra over a field $k$ and $X, Y$ subspaces. Define the \textit{wedge product} $X \wedge Y$  of $X$ and $Y$ to be the kernel of the composition
		\[\begin{tikzcd}
			C && {C \otimes C} && {C/X \otimes C/Y}
			\arrow["{\pi \otimes \pi}", from=1-3, to=1-5]
			\arrow["\Delta", from=1-1, to=1-3]
		\end{tikzcd}\]
	\end{definition}
	
	There are two important observations from the definition, which were left as exercises in \cite{Sweedler}.
	\begin{proposition} \cite{Sweedler}
		\begin{enumerate}
			\item  $(X \wedge Y) \wedge Z = X \wedge (Y \wedge Z)$
			\item $X \wedge Y = \Delta^{-1}(C \otimes Y + X \otimes C)$
		\end{enumerate}
	\end{proposition}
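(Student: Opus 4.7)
The plan is to establish (2) first, and then obtain (1) essentially as a formal consequence of (2) together with the coassociativity of $\Delta$; in both parts the field hypothesis will enter only through exactness of the tensor product.

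For (2), I will identify $\Ker(\pi_X \otimes \pi_Y)$ inside $C \otimes C$. Tensoring the short exact sequence $0 \to X \to C \to C/X \to 0$ with $C/Y$ stays exact over $k$, so the kernel of $\pi_X \otimes \Id_{C/Y}$ is $X \otimes C/Y$. Composing with $\Id_C \otimes \pi_Y : C \otimes C \to C \otimes C/Y$, whose kernel is $C \otimes Y$, gives $\Ker(\pi_X \otimes \pi_Y) = X \otimes C + C \otimes Y$. Since $X \wedge Y$ is by definition the preimage of this kernel under $\Delta$, part (2) follows.

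For (1), I will form the threefold composite
\[
\Phi \;=\; (\pi_X \otimes \pi_Y \otimes \pi_Z)\,(\Delta \otimes \Id_C)\,\Delta ,
\]
which by coassociativity also equals $(\pi_X \otimes \pi_Y \otimes \pi_Z)\,(\Id_C \otimes \Delta)\,\Delta$. It suffices to show that $\Ker \Phi$ equals both $(X \wedge Y) \wedge Z$ and $X \wedge (Y \wedge Z)$. For the first identification, by definition of $X \wedge Y$ the map $(\pi_X \otimes \pi_Y)\,\Delta$ factors as $\bar{\iota} \circ \pi_{X \wedge Y}$, where $\bar{\iota} : C/(X \wedge Y) \hookrightarrow C/X \otimes C/Y$ is injective. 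A short diagram chase then rewrites $\Phi$ as $(\bar{\iota} \otimes \Id_{C/Z})\,(\pi_{X \wedge Y} \otimes \pi_Z)\,\Delta$, and because $\bar{\iota} \otimes \Id_{C/Z}$ remains injective over a field, this yields $\Ker \Phi = \Ker((\pi_{X \wedge Y} \otimes \pi_Z)\,\Delta) = (X \wedge Y) \wedge Z$. Applying the symmetric argument to the alternative expression for $\Phi$ gives $\Ker \Phi = X \wedge (Y \wedge Z)$, from which associativity follows.

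The main technical point to watch is the injectivity of tensored maps such as $\bar{\iota} \otimes \Id_{C/Z}$, which is precisely where the field hypothesis is used (over a general commutative ring one would instead need a flatness hypothesis on the quotients). Beyond this, everything reduces to coassociativity and the universal property of kernels, so I do not anticipate further obstacles.
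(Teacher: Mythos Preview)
Your proposal is correct and follows essentially the same route as the paper: for (2) the paper isolates the linear-algebra fact $\ker(f\otimes g)=\ker f\otimes V_2+V_1\otimes\ker g$ (proved via splitting rather than your exactness argument, but equivalent over a field), and for (1) it forms exactly your map $\Phi$ and appeals to coassociativity. If anything, your factorization $\Phi=(\bar\iota\otimes\Id_{C/Z})(\pi_{X\wedge Y}\otimes\pi_Z)\Delta$ makes explicit the step identifying $\Ker\Phi$ with $(X\wedge Y)\wedge Z$, which the paper leaves implicit.
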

	The first assertion follows from coassociativity $(\Delta \otimes id) \circ \Delta = (id \otimes \Delta) \circ \Delta$ that the kernel of 
	\[\begin{tikzcd}
		C && {C \otimes C \otimes C} && {C/X \otimes C/Y \otimes C/Z}
		\arrow["{\pi \otimes \pi \otimes \pi}", from=1-3, to=1-5]
		\arrow["(\Delta \otimes id) \circ \Delta ", from=1-1, to=1-3]
	\end{tikzcd}\]
	
	is the same as the kernel of 
	\[\begin{tikzcd}
		C && {C \otimes C \otimes C} && {C/X \otimes C/Y \otimes C/Z}
		\arrow["{\pi \otimes \pi \otimes \pi}", from=1-3, to=1-5]
		\arrow["(id \otimes \Delta) \circ \Delta ", from=1-1, to=1-3]
	\end{tikzcd}\]
	As a result, it make senses to use the notation $\wedge^nX = (\wedge^{n-1}X) \wedge X$ without attention to the order. The second assertion follows form the following linear algebra exercise.
	
	\begin{lemma}
		If $f:V_1 \to W_1$ and $g: V_2 \to W_2$ are linear maps between vector spaces. Then 
		$$\ker(f \otimes g) = \ker(f) \otimes V_2 + V_1 \otimes \ker(g)$$
	\end{lemma}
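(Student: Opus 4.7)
The plan is to prove the two inclusions $\ker(f) \otimes V_2 + V_1 \otimes \ker(g) \subseteq \ker(f \otimes g)$ and the reverse inclusion separately. The first inclusion is straightforward: on an elementary tensor $v_1 \otimes v_2$ with either $v_1 \in \ker(f)$ or $v_2 \in \ker(g)$, we have $(f \otimes g)(v_1 \otimes v_2) = f(v_1) \otimes g(v_2) = 0$, and extending by linearity takes care of the whole sum.

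For the reverse inclusion, I would exploit that we are working with vector spaces, so every subspace has a complement. Choose decompositions $V_1 = \ker(f) \oplus U_1$ and $V_2 = \ker(g) \oplus U_2$. Distributing the tensor product over the direct sums gives
\[
V_1 \otimes V_2 \;=\; \bigl(\ker(f) \otimes \ker(g)\bigr) \oplus \bigl(\ker(f) \otimes U_2\bigr) \oplus \bigl(U_1 \otimes \ker(g)\bigr) \oplus \bigl(U_1 \otimes U_2\bigr),
\]
and a direct check shows that the sum of the first three summands is exactly $\ker(f) \otimes V_2 + V_1 \otimes \ker(g)$. Clearly $f \otimes g$ kills the first three summands.

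The remaining step, which I expect to be the main obstacle, is to verify that $f \otimes g$ is \emph{injective} on $U_1 \otimes U_2$; once this is done, the direct sum decomposition forces $\ker(f \otimes g)$ to coincide with the first three summands and the proof is complete. Here $f|_{U_1} : U_1 \to W_1$ and $g|_{U_2} : U_2 \to W_2$ are injective by construction. Injectivity of $(f|_{U_1}) \otimes (g|_{U_2})$ can be obtained either by invoking flatness of vector spaces (so tensoring preserves injections, and a composition of two such preserved injections is still injective), or more concretely by fixing bases of $U_1$ and $U_2$, noting that their images are linearly independent in $W_1$ and $W_2$, and observing that the corresponding elementary tensors are therefore linearly independent in $W_1 \otimes W_2$. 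Either argument closes the lemma.
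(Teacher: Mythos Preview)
Your proposal is correct and follows essentially the same approach as the paper: both arguments choose complements $V_1 = \ker f \oplus U_1$, $V_2 = \ker g \oplus U_2$, split $V_1 \otimes V_2$ into four summands, and identify $\ker(f\otimes g)$ with the three summands other than $U_1 \otimes U_2$ by showing $f\otimes g$ is injective there. The only cosmetic difference is that the paper first reduces to the case where $f,g$ are surjective and then uses the resulting sections to witness that $(f\otimes g)|_{U_1\otimes U_2}$ is an isomorphism, whereas you argue injectivity directly via flatness or bases; the content is the same.
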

	\begin{proof}
		Every monomorphism of vector spaces splits. Thus the image of $f \otimes g$ is $f(V_1) \otimes f(V_2) \subset W_1 \otimes W_2$. Without loss of generality, assume both $f$ and $g$ surjective. In this case, since the short exact sequence splits, $f$ induces a splitting  as $V_1 = \ker f \oplus V'_1$ s.t. $V'_1 \cong W_1$ via $f$. Similarly, $g$ induces a splitting  as $V_2 = \ker g \oplus V'_2$ s.t. $V'_2 \cong W_2$ via $f$ Then 
		$$V_1 \otimes V_2 = \ker f \otimes \ker g \oplus \ker f \otimes V'_2 \oplus V'_1 \otimes \ker g \oplus V'_1 \otimes V'_2$$
		Note if $s \circ f = id$ and $t \circ g = id$, then $(s \otimes t) \circ (f \otimes g) = (s \circ f) \otimes (t \circ g) = id \otimes id = id$. This implies given the splitting we chose for $f$ and $g$, $im (f \otimes g) = V'_1 \otimes V'_2$ and
		$$\ker(f \otimes g) = \ker f \otimes \ker g \oplus \ker f \otimes V'_2 \oplus V'_1 \otimes \ker g  = \ker(f) \otimes V_2 + V_1 \otimes \ker(g)$$
	\end{proof}
	Let $C_0$ be the coradical for an coalgebra $C$. It is known (pp.125 \cite{DR2}) the sequence of subspaces $C_n = \wedge^nC_0$ forms a exhaustive coalgebra filtration; it is called \textit{coradical filtration} of $C$. 
	\begin{definition}
		A \textit{pointed} coalgebra over a field is defined to be a coalgebra in which every cosimple subcoalgebras is one dimensional.
	\end{definition}
	
Alternatively, a pointed coalgebra can be defined as a coalgebra whose coradical is generated freely by set-like elements, since every one-dimensional coalgebra is necessarily generated by a set-like element. It is essential to note that the set of set-like elements is linearly independent (refer to Lemma 2.1.12 in \cite{DR2} for further details).

\begin{example} {\ }
\begin{enumerate}
\item The simplest form of a pointed coalgebra is given by $kg$, where $g$ is a set-like element. By duality, every one-dimensional cosimple coalgebra is isomorphic to such a coalgebra $kg$. The element $g$ in this case is referred to as a \textit{set-like} element. 
Furthermore, the sum of one-dimensional cosimple coalgebras is also pointed.
\item According to \cite{Sweedler}, a cocommutative coalgebra over an algebraically closed field is pointed.
\item As stated in \cite{DR2}, a graded coalgebra with its degree zero component spanned by group-like elements is pointed. This follows from the observation that the coradical must be contained within the degree one component (Proposition 4.1.2 in \cite{DR2}).
\item Every path coalgebra associated with a quiver is a pointed coalgebra. This can be verified either directly or by utilizing property (2) and noting that the path coalgebra is graded, with its zero component being spanned by set-like elements (vertices). The path coalgebra is coradically graded. Further information can be found in \cite{Chin2003ABI}.
\end{enumerate}
\end{example}

We know that the path coalgebra associated with a quiver is pointed. However, the converse remains an open problem. Nonetheless, we can establish the following weaker converse:

\begin{proposition} \label{15} \cite{d_1997}
Let $C$ be a pointed coalgebra. Then there exists a quiver such that $C$ is isomorphic to an admissible subcoalgebra of the path coalgebra of the quiver.
\end{proposition}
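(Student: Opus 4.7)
The plan is to build the Ext quiver $Q$ of $C$ from its set-like elements and skew-primitives, and then construct an injective coalgebra morphism $\phi\colon C \to kQ$ whose image contains all vertices and arrows (hence is admissible). Let $G = G(C)$ be the set of set-like elements, so the coradical is $C_0 = \bigoplus_{g \in G} kg$. For each ordered pair $g, h \in G$, put
$$P_{g,h} = \{x \in C : \Delta(x) = g \otimes x + x \otimes h\},$$
and set $\overline{P}_{g,h} = P_{g,h}/k(g-h)$. Define $Q$ to have vertex set $Q_0 = G$ and, for each pair $(g,h)$, a set of arrows from $g$ to $h$ indexed by a chosen $k$-basis of $\overline{P}_{g,h}$.

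The map $\phi$ is built by induction on the coradical filtration $C_0 \subseteq C_1 \subseteq \cdots$, after fixing a $k$-linear splitting $C_n = C_{n-1} \oplus V_n$ with $V_0 = C_0$. On $V_0$, each $g$ is sent to its vertex. By the Taft--Wilson theorem the complement $V_1$ may be identified with $\bigoplus_{g,h}\overline{P}_{g,h}$, so each basis representative is sent to the corresponding arrow. For $x \in V_n$ with $n \geq 2$, the coalgebra filtration axiom $\Delta(C_n) \subseteq \sum_{i+j=n} C_i \otimes C_j$ forces the non-trivial piece of $\Delta(x)$ into strictly earlier filtration stages, so $(\phi \otimes \phi)\Delta(x)$ is already specified by the inductive hypothesis, and $\phi(x)$ is then recovered by applying the counit of $kQ$ to one tensor factor.

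Injectivity follows because $\phi$ preserves the filtrations (the coradical filtration of $kQ$ coincides with the path-length filtration) and its restriction to each $V_n$ is injective by the choice of bases, so $\phi$ is injective overall. Admissibility is automatic, since $V_0$ maps bijectively onto $kQ_0$ and $V_1$ maps injectively onto a full set of representatives of $kQ_1$.

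The main obstacle is the inductive step defining $\phi$ on $V_n$ for $n \geq 2$: one must verify that the recipe using $(\phi \otimes \phi)\Delta(x)$ together with the counit produces a well-defined element, independent of which tensor factor the counit is applied to, and that the resulting linear map is genuinely a coalgebra morphism rather than merely $k$-linear. This reduces to coassociativity combined with the specific combinatorics of path comultiplication in $kQ$, but the bookkeeping is delicate because $C$ need not be coradically graded and the splittings $V_n$ need not be stable under $\Delta$. One way to sidestep the direct construction is to invoke the fundamental theorem of coalgebras to write $C$ as a directed union of finite-dimensional subcoalgebras, treat the finite-dimensional case via duality to basic algebras and Gabriel's theorem, and then pass to the colimit while verifying compatibility of the resulting admissible embeddings.
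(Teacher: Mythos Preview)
The paper does not supply its own proof of this proposition; it is stated with a citation to \cite{d_1997} and used later (in the proof of Theorem~\ref{11}) as an external input. So there is no argument in the paper to compare yours against.

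On the substance of your sketch: the gap you flag in the inductive step is real, and the phrase ``$\phi(x)$ is then recovered by applying the counit of $kQ$ to one tensor factor'' is circular as a \emph{definition}. Applying $(\epsilon_{kQ}\otimes\mathrm{id})$ to $(\phi\otimes\phi)\Delta(x)$ returns $\phi(x)$ only once $\phi$ is already known to be a coalgebra map, so it cannot be used to produce $\phi(x)$ in the first place. What is actually needed is to show that the element $(\phi\otimes\phi)\Delta(x)\in kQ\otimes kQ$ lies in the image of $\Delta_{kQ}$ and has a preimage in path-length $\leq n$; this uses the specific fact that in $kQ$ an element is determined by its deconcatenations, together with coassociativity on the $C$ side. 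That verification is exactly the ``delicate bookkeeping'' you allude to, and without it the construction is incomplete.

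A cleaner route, already implicit in the paper, is to identify $kQ$ with the cotensor coalgebra $\mathrm{CoT}_{C[G]}(kQ_1)$ and invoke the universal property recorded later as Radford's theorem: after choosing a splitting of the coradical (which always exists for pointed $C$, as the paper notes), a linear map from $\overline{C}$ to the arrow space extends uniquely to a coalgebra map $C\to kQ$. Taking the arrow space to be the bicomodule of skew-primitives modulo the trivial ones, one obtains $\phi$ without any inductive construction, and injectivity then follows by a short induction on the coradical filtration since $\phi$ is an isomorphism on $C_0$ and on the degree-one part. Your fallback via the fundamental theorem and duality to basic algebras is viable in the finite-dimensional case, but gluing the resulting embeddings along the directed system is itself nontrivial unless one fixes a single target quiver in advance.
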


It has been shown in \cite{Susan} that every pointed coalgebra admits a splitting of its coradical. However, the choice of splitting is not unique. To record the information of the splitting $\delta$, we come up with the following notion. 

\begin{definition}
A \textit{pointed coalgebra with a splitting} is a pair $(C, \delta)$ consisting of a pointed coalgebra $C$ and a surjective coalgebra homomorphism $\delta$ from $C$ onto its coradical $C_0$.
\end{definition}

\begin{remark}
    The splitting of coradical is a part of structure, and it is not property of pointed coalgebras. 
\end{remark}

Note such a $\delta$ induces a splitting $C = C_0 \oplus I$ where $I = \ker \delta$. Conversely, if we are given a pointed coalgebra with $C = C_0 \oplus I$ where $I$ is a coideal, then the canonical projection $\pi_{C_0}: C \to C_0$ naturally provides a splitting. Hence, we occasionally use the notation $(C, I)$ to denote a pointed coalgebra with a splitting.
	
Lastly, we demonstrate that the category of pointed coalgebras is complete and cocomplete. We begin by presenting the following observations:

\begin{lemma} {\ }
\begin{enumerate}
\item Let $f: C \to D$ be a surjective coalgebra homomorphism. If $I$ is a coideal in $C$, then $f(I)$ is also a coideal in $D$.
\item If $f: C \to D$ is a surjective coalgebra homomorphism, where $C$ is pointed, then $D$ is also pointed, and $f(C_0) = D_0$.
\item The category of coalgebras over a field is complete and cocomplete, meaning it has both small limits and small colimits.
\end{enumerate}
\end{lemma}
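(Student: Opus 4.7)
The plan is to handle the three parts in order, with the only non-routine work appearing in (2).

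For (1), the coalgebra identities $\Delta_D\circ f=(f\otimes f)\circ\Delta_C$ and $\epsilon_D\circ f=\epsilon_C$ give $\Delta_D(f(I))=(f\otimes f)\Delta_C(I)\subseteq (f\otimes f)(I\otimes C+C\otimes I)=f(I)\otimes D+D\otimes f(I)$ (using surjectivity to replace $f(C)$ by $D$), together with $\epsilon_D(f(I))=\epsilon_C(I)=0$. This is exactly the coideal condition, so $f(I)$ is a coideal of $D$.

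For (2), I first note that any coalgebra homomorphism sends a set-like element either to zero or to another set-like element: the identities $\Delta_D(f(g))=f(g)\otimes f(g)$ and $\epsilon_D(f(g))=1$ force $f(g)\neq 0$ to be set-like whenever $g$ is. Since $C$ is pointed, $C_0=\bigoplus_{g\in G}kg$, so the image $f(C_0)$ is spanned by set-likes of $D$, hence cosemisimple, giving $f(C_0)\subseteq D_0$ immediately. The harder direction, and the main obstacle, is to show every cosimple subcoalgebra $T\subseteq D$ is both one-dimensional and contained in $f(C_0)$. My approach is to reduce to finite dimensions: by the fundamental theorem of coalgebras $T$ is finite-dimensional, so I lift a basis of $T$ to $C$ and pass to the finite-dimensional pointed subcoalgebra $C'\subseteq C$ they generate, obtaining $T\subseteq f(C')$. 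Dualising produces an inclusion of finite-dimensional algebras $f(C')^*\hookrightarrow (C')^*$ with $(C')^*$ basic (that is, $(C')^*/J\cong k^n$). Using that the Jacobson radical of a subalgebra $B$ of a finite-dimensional algebra $A$ satisfies $J(B)=B\cap J(A)$ (both inclusions being forced by nilpotence of $J$ in the Artinian setting), one obtains $f(C')^*/J\hookrightarrow (C')^*/J\cong k^n$; any unital $k$-subalgebra of $k^n$ is itself of the form $k^m$, so $f(C')^*$ is basic. Dually $f(C')$ is pointed, $T$ is one-dimensional, and $T\subseteq (f(C'))_0=f(C'_0)\subseteq f(C_0)$. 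The equality $(f(C'))_0=f(C'_0)$ comes by dualising the observation that every character $k^m\to k$ extends to a character $k^n\to k$.

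For (3), the constructions are standard. Coproducts in the category of coalgebras are direct sums with the induced coalgebra structure. For coequalizers of coalgebra maps $f,g:C\to D$, a direct expansion of $\Delta_D\circ(f-g)$ shows that $\im(f-g)\subseteq D$ is already a coideal, so the coequalizer is $D/\im(f-g)$. Equalizers arise as the largest subcoalgebra of $C$ contained in the subspace $\{c\in C:f(c)=g(c)\}$, which exists because any sum of subcoalgebras is again a subcoalgebra. Products exist either via the special adjoint functor theorem with the small set of finite-dimensional coalgebras as a strong generator, or by the known fact that the category of coalgebras over a field is locally finitely presentable (it is the Ind-completion of its small subcategory of finite-dimensional objects) and hence complete and cocomplete.
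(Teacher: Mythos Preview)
Your arguments for (1) and (3) match the paper's: for (1) the paper gives the same one-line computation, and for (3) the paper simply defers to the literature, so your sketch of the standard constructions is if anything more informative.

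For (2) your route is genuinely different from the paper's. The paper disposes of the hard inclusion $D_0\subseteq f(C_0)$ by citing Proposition~4.1.7 of \cite{DR2} (surjective coalgebra maps send coradicals onto coradicals), then observes that images of set-likes are set-like for the reverse inclusion. You instead localise to a finite-dimensional pointed subcoalgebra $C'$, dualise, and argue with basic algebras. This is a perfectly legitimate and more self-contained strategy, and it yields pointedness of $D$ and $D_0=f(C_0)$ simultaneously.

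There is, however, a gap in your justification. The assertion that $J(B)=B\cap J(A)$ for a subalgebra $B$ of an arbitrary finite-dimensional algebra $A$, with ``both inclusions forced by nilpotence'', is false in general: take $A=M_2(k)$ (so $J(A)=0$) and $B$ the upper-triangular matrices (so $J(B)\neq 0$). Nilpotence only gives $B\cap J(A)\subseteq J(B)$, since $B\cap J(A)$ is a nilpotent ideal of $B$; the reverse inclusion fails because $J(B)$ need not be an ideal of $A$. What rescues your argument is precisely the hypothesis that $A=(C')^*$ is \emph{basic}: from $B\cap J(A)\subseteq J(B)$ you get an embedding $B/(B\cap J(A))\hookrightarrow A/J(A)\cong k^n$, and since every unital subalgebra of $k^n$ is semisimple (indeed $\cong k^m$), it follows that $B\cap J(A)$ already equals $J(B)$. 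So the conclusion stands, but the order of the reasoning must be reversed: first embed the quotient by $B\cap J(A)$ into $k^n$, then deduce semisimplicity, then conclude the radical equality. Once this is repaired, your argument is complete and gives an independent proof of the step the paper outsources to \cite{DR2}.
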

	\begin{proof} {\ }
		\begin{enumerate} 
			\item A direct check: $\Delta(f(I)) = (f \otimes f) \Delta(I) \subset (f \otimes f) (I \otimes C + C \otimes I) \subset f(I) \otimes D + D \otimes f(I)$ and $\epsilon_D(f(I)) = \epsilon_C(I) = 0$. 
			\item First, $D_0 \subset f(C_0)$ if $f$ is surjective (Proposition 4.1.7 \cite{DR2}). Second, if $g$ is set-like in $C$, then $f(g)$ is also set-like and spans a simple subcoalgebra in $D$. Therefore, $f(C_0) \subset D_0$. 
			\item There are multiple approaches to prove this statement. One possible approach is discussed in \cite{limit} and \cite{limit2}.
		\end{enumerate}	
	\end{proof}
	
	\begin{theorem} {\ } 
		\begin{enumerate}
			\item Every pointed coalgebra is the filtered colimit of its finite-dimensional pointed subcoalgebra.
			\item The embedding $\iota$ from the category of pointed coalgebras to the category of coalgebras admits a right adjoint functor $\Phi$. 
			\item The category of pointed coalgebras is bicomplete.
		\end{enumerate}
	\end{theorem}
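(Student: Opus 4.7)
For part (1), my plan is to invoke the fundamental theorem of coalgebras---every coalgebra is the filtered colimit of its finite-dimensional subcoalgebras---and then verify that each finite-dimensional subcoalgebra $D$ of a pointed coalgebra $C$ is itself pointed. This is immediate, since any cosimple subcoalgebra of $D$ is \emph{a fortiori} a cosimple subcoalgebra of $C$, hence one-dimensional by the pointed hypothesis.

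For part (2), I would define $\Phi(C)$ to be the sum of all pointed subcoalgebras of $C$. The universal property then falls out of part (2) of the preceding lemma: given any coalgebra homomorphism $f \colon \iota P \to C$ with $P$ pointed, the image $f(P)$ is a pointed subcoalgebra of $C$ and hence contained in $\Phi(C)$; conversely, any map $P \to \Phi(C)$ extends to $\iota P \to C$ via the inclusion $\Phi(C) \hookrightarrow C$. The nontrivial step is to verify that $\Phi(C)$ is itself pointed. For this I would first show that the sum $D_1 + D_2$ of two pointed subcoalgebras is pointed, by observing that the addition map $D_1 \oplus D_2 \twoheadrightarrow D_1 + D_2$ is a surjective coalgebra homomorphism whose source is pointed (its coradical decomposes as $(D_1)_0 \oplus (D_2)_0$), and then applying the lemma to the quotient. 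By induction this extends to arbitrary finite sums, and $\Phi(C)$ is the directed union of these finite sums; since any cosimple subcoalgebra of the union is finite-dimensional by the fundamental theorem, it must lie in some finite sum and is therefore one-dimensional.

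For part (3), cocompleteness follows cleanly from part (2): $\iota$ is a left adjoint and hence preserves colimits, and one can also check directly that the pointed subcategory is closed under colimits in the category of coalgebras---coproducts are direct sums, which are visibly pointed, and coequalizers are quotients, which remain pointed by the lemma. For completeness I would use the standard coreflection trick: given a diagram $D$ in the pointed category, compute the limit $L$ of $\iota \circ D$ in the complete category of coalgebras and declare the limit in pointed coalgebras to be $\Phi(L)$. The full faithfulness of $\iota$, combined with the adjunction $\iota \dashv \Phi$, identifies cones in the pointed category over $D$ with morphisms into $\Phi(L)$, which gives the required universal property. The main obstacle I anticipate is the verification that $\Phi(C)$ is pointed, since a cosimple subcoalgebra of $D_1 + D_2$ need not lie in either summand; the addition-map trick that reduces the problem to a quotient of the direct sum $D_1 \oplus D_2$ is the essential technical move, and everything else is a routine assembly of the pieces.
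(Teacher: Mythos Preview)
Your proposal is correct and follows essentially the same architecture as the paper: both define $\Phi(C)$ as the sum of all pointed subcoalgebras of $C$, and both deduce bicompleteness from the resulting coreflection $\iota \dashv \Phi$ together with the bicompleteness of the ambient category of coalgebras.

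There is one genuine technical difference worth noting. You flag the pointedness of $\Phi(C)$ as the main obstacle and resolve it by an ``addition-map trick'': for finitely many pointed subcoalgebras $D_1,\dots,D_n$, the surjection $D_1 \oplus \cdots \oplus D_n \twoheadrightarrow D_1 + \cdots + D_n$ has pointed source, so the preceding lemma (surjective images of pointed coalgebras are pointed) makes the target pointed; then any cosimple subcoalgebra of the directed union $\Phi(C)$ is finite-dimensional and hence lands in some finite sum. The paper instead invokes an external fact (Proposition~3.4.3 of \cite{DR2}) asserting directly that every simple subcoalgebra of a sum $\sum_\alpha C_\alpha$ already lies in one of the summands $C_\alpha$. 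Your route is more self-contained---it uses only the lemma already stated in the paper---while the paper's is shorter but depends on a reference. A second, minor difference: you verify the adjunction via the hom-set bijection, whereas the paper writes down the unit and counit and checks the triangle identities; these are of course equivalent.
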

	\begin{proof}	{\ }
		\begin{enumerate}
			\item Any subcoalgebra of a pointed coalgebra is also pointed, and this can be proven using the same proof as the fundamental theorem for coalgebras. 
			\item 	We have an explicit construction given by the formula $\Phi(C) = \sum_{\alpha}C_\alpha$, where $C_\alpha$ runs over all pointed subcoalgebras of $C$. Note that $\Phi(C)$ is not only a subcoalgebra of $C$ but also a pointed coalgebra. This is because, by proposition 3.4.3 in \cite{DR2}, every simple subcoalgebra in $\Phi(C)$ must be in one of the $C_\alpha$ and thus has dimension one. Moreover, the image of a pointed coalgebra under a coalgebra homomorphism is also a pointed coalgebra. This follows from the fact that the image is a subcoalgebra and the map is surjective onto its image (we can use the previous lemma in this section). A map $f: C \to D$ induces a map $\Phi(f): \Phi(C) \to \Phi(D)$.
   
            Now We define the unit $\eta_X: X \to (\Phi\iota)(X)$ to be the identity on a pointed coalgebra $X$, and the counit $\varepsilon_Y: (\iota\Phi)(Y) \to Y$ as the natural inclusion of the pointed subcoalgebra $\Phi(Y)$ in a coalgebra $Y$. The maps $\eta$ and $\varepsilon$ are natural transformations. We need to show that the triangular identities hold.
            
            It is easy to check $(\Phi\iota\Phi)(X) = \Phi(X)$ and $(\iota\Phi\iota)(Y) = \iota(Y)$. In other words, the following triangular identities hold
			$$(\Phi(\varepsilon_X) \circ \eta_{\Phi(X)})(\Phi(X)) = \Phi(X)$$
			$$(\varepsilon_{\iota(X)} \circ \iota(\eta_X))(\iota(X))  = \iota(X)$$
			It follows from \cite{lane_2010} that there is an adjunction such that $\Phi$ is right adjoint to $\iota$.
			\item (2) shows that the category of pointed coalgebras is a coreflective subcategory of the category of coalgebras. Since the category of coalgebras is complete and cocomplete, it follows from the corollaries in \cite{herrlich_strecker_1971} that the category of pointed coalgebras is also complete and cocomplete.
		\end{enumerate}
	\end{proof}
 
\section{Algebraic perspective} 
In this section, we explore the algebraic structure of simply colored coalgebras. We start by examining the general simply colored comonoid in any monoidal abelian category. we show reduced comultiplication is coassociative. Subsequently, we narrow our scope to simply colored coalgebras in the category of $R$-modules. We prove the equivalence between simply colored coalgebras over a field and pointed coalgebras with a splitting.

\subsection{Simply colored comonoid}
In this subsection, we extend the constructions and proofs presented in Section 2.3 of \cite{kaufmann_mo_2022} to a monoidal category. We consider $\mathcal{C}$ to be a small monoidal category $(\mathcal{C}, \otimes, \mathds{1})$ that also possesses an abelian structure. It is important to note that, at this stage, we do not assume the compatibility of the monoidal structure with the abelian structure.

\begin{remark}
According to Mac Lane's Coherence Theorem, the associators and unitors can be assumed to be strict in the calculations. Additionally, the tensor product of multiple objects can be specified without ambiguity. Hence, in this paper, we assume any monoidal category is strict without loss of generality.
\end{remark}
	
\subsubsection{Reduced comultiplication}
Next, let $\delta: \mathbf{C} \to \mathbf{S}$ be a comonoid retraction between two comonoids $\mathbf{C}$ and $\mathbf{S}$ in the category. This means that $\delta$ is the left inverse of the monomorphism of comonoids $\iota: \mathbf{S} \to \mathbf{C}$. Since the category $\mathcal{C}$ is abelian, we have $\mathbf{C} \cong \mathbf{S} \oplus \mathbf{I}$, where $\mathbf{I}$ is the kernel of $\delta$.
We identify $\mathbf{S}$ with the image of $\iota$, and we consider $\delta$ as the composition $\iota \circ \delta$ (which becomes an idempotent). Consequently, $\mathbf{S}$ is a well-defined subcomonoid.
\begin{remark}
The notion of a subcomonoid is not generally well-defined. While $\Delta(\iota(\mathbf{S})) \subset \iota(\mathbf{S}) \otimes \iota(\mathbf{S})$, it may not be a subobject of $\mathbf{C} \otimes \mathbf{C}$. However, in our case, we require the map $\iota: \mathbf{S} \to \mathbf{C}$ to be a \textit{split} monomorphism of comonoids. The monoidal product of two split monomorphisms is also a split monomorphism, thanks to the interchange law. Hence, $\iota(\mathbf{S})$ is indeed a well-defined subcomonoid of $\mathbf{C}$ that is isomorphic to $\mathbf{S}$.
\end{remark}
	
	Consequently, the following equations make sense.
	\begin{equation}
		\delta \circ \delta = \delta
	\end{equation}
	\begin{equation}
		\epsilon \circ \delta = \epsilon
	\end{equation}
	\begin{equation}
		\Delta \circ \delta = (\delta \otimes \delta) \circ \Delta
	\end{equation}
	
	Now we will construct the following $\mathbf{S}$-bicomodule on $\mathbf{C}$ via $\omega_l = (\delta \otimes id) \circ \Delta$ and $\omega_r = (id \otimes \delta) \circ \Delta$. 
	\begin{proposition}
		$\omega_l = (\delta \otimes id) \otimes \Delta$ and $\omega_r = (id \otimes \delta) \otimes \Delta$ induces an $\mathbf{S}$-bicomodule on $\mathbf{C}$
	\end{proposition}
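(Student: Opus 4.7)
The plan is to verify the five defining axioms of an $\mathbf{S}$-bicomodule on $\mathbf{C}$: the two counit axioms, coassociativity of $\omega_l$, coassociativity of $\omega_r$, and the bicomodule compatibility $(\omega_l \otimes id) \circ \omega_r = (id \otimes \omega_r) \circ \omega_l$. Each of these reduces, by a short diagram chase, to the three displayed identities $\delta \circ \delta = \delta$, $\epsilon \circ \delta = \epsilon$, and $\Delta \circ \delta = (\delta \otimes \delta) \circ \Delta$, combined with coassociativity of $\Delta$ and the counit axiom of the comonoid $\mathbf{C}$. No additional ingredient will be required.

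First I would dispatch the counit axioms. For the left coaction,
\[
(\epsilon \otimes id) \circ \omega_l \;=\; ((\epsilon \circ \delta) \otimes id) \circ \Delta \;=\; (\epsilon \otimes id) \circ \Delta \;=\; id,
\]
using the second displayed identity together with the counit axiom for $\mathbf{C}$; the right counit axiom is symmetric.

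Next I would check coassociativity of $\omega_l$. Applying the third displayed identity gives
\[
(\Delta \otimes id) \circ \omega_l \;=\; ((\Delta \circ \delta) \otimes id) \circ \Delta \;=\; (\delta \otimes \delta \otimes id) \circ (\Delta \otimes id) \circ \Delta,
\]
while direct expansion yields
\[
(id \otimes \omega_l) \circ \omega_l \;=\; (\delta \otimes \delta \otimes id) \circ (id \otimes \Delta) \circ \Delta,
\]
and these agree by coassociativity of $\Delta$ on $\mathbf{C}$. The coassociativity of $\omega_r$ is dual. For the bicomodule compatibility, unwinding the definitions and using only functoriality of $\otimes$ reduces both $(\omega_l \otimes id) \circ \omega_r$ and $(id \otimes \omega_r) \circ \omega_l$ to
\[
(\delta \otimes id \otimes \delta) \circ (\Delta \otimes id) \circ \Delta \;=\; (\delta \otimes id \otimes \delta) \circ (id \otimes \Delta) \circ \Delta,
\]
where the equality is, once more, coassociativity of $\Delta$.

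There is no genuine obstacle here: the argument lives entirely inside the monoidal calculus of the comonoid $\mathbf{C}$ and does not use any property of the abelian structure beyond the existence of the kernel $\mathbf{I}$. The only subtle point worth flagging is that the identity $\Delta \circ \delta = (\delta \otimes \delta) \circ \Delta$ must make sense as a morphism $\mathbf{C} \to \mathbf{C} \otimes \mathbf{C}$ in $\mathcal{C}$, and this is precisely what the remark preceding the three displayed identities ensures, since the splitting of $\iota$ makes $\mathbf{S}$ a genuine (split) subcomonoid of $\mathbf{C}$.
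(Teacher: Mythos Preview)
Your proposal is correct and follows essentially the same approach as the paper: both verify the five bicomodule axioms by direct manipulation, reducing everything to coassociativity of $\Delta$ together with the three displayed identities for $\delta$. Your presentation is a bit more compressed---you factor out the $\delta$'s to exhibit each side as $(\delta \otimes \cdots)$ applied to one side of coassociativity, whereas the paper expands step by step in aligned computations---but the substance is identical.
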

	\begin{proof}
		We need to check the following equation for the right comodule structure. 
		\begin{equation}
			(\omega_r \otimes id) \circ \omega_r =  (id \otimes \Delta) \circ \omega_r
		\end{equation}
		Do the computation:
		\begin{align*}
			(\omega_r \otimes id) \circ \omega_r &= (((id \otimes \delta) \circ \Delta) \otimes id) \circ (id \otimes \delta) \circ \Delta \\
			&= (((id \otimes \delta) \circ \Delta) \otimes \delta)  \circ \Delta \\ 
			&=  ((id \otimes \delta)\otimes \delta) \circ (\Delta \otimes id) \circ \Delta \\
			&= (id \otimes (\delta \otimes \delta)) \circ (id \otimes \Delta) \circ \Delta \\
			&= (id \otimes \Delta) \circ (id \otimes \delta) \circ \Delta \\
			&= (id \otimes \Delta) \circ \omega_r
		\end{align*}
		Note we have used the coassociativity and the fact $\delta$ is a comonoid homomorphism. 
		\begin{equation}
			(id \otimes \epsilon) \circ \omega_r = id
		\end{equation}
		Do the computation again:
		\begin{align*}
			(id \otimes \epsilon) \circ \omega_r &= (id \otimes \epsilon) \circ (id \otimes \delta) \circ \Delta \\
			&= (id \otimes (\epsilon \circ \delta)) \circ \Delta \\ 
			&= (id \otimes \epsilon) \circ \Delta \\
			&= id
		\end{align*}
		Here in $(\epsilon \circ \delta) = \epsilon$ we use the fact $\delta$ is a comonoid retraction. 
		With similar computation which we ignore, we show the following equations for the left comodule structure.
		\begin{equation}
			(id \otimes \omega_l) \circ \omega_l =  (\Delta \otimes id) \circ \omega_l
		\end{equation}
		\begin{equation}
			(\epsilon \otimes id) \circ \omega_l = id
		\end{equation}
		Finally, for the bicomodule structure, we need to compute
		\begin{equation}
			(\omega_l \otimes id) \circ \omega_r = (id \otimes \omega_r) \circ \omega_l
		\end{equation}
		\begin{align*}
			(\omega_l \otimes id) \circ \omega_r &= (((\delta \otimes id)\circ \Delta) \otimes id) \circ  (id \otimes \delta) \circ \Delta \\
			&= ((\delta \otimes id) \otimes id) \circ (\Delta \otimes id) \circ  (id \otimes \delta) \circ \Delta \\
			&=((\delta \otimes id) \otimes id) \circ (\Delta \otimes \delta) \circ \Delta \\
			&= ((\delta \otimes id) \otimes id) \circ ((id \otimes id) \otimes \delta) \circ (\Delta\otimes id) \circ \Delta \\
			&= (\delta \otimes (id \otimes id)) \circ (id \otimes (id \otimes \delta)) \circ (id \otimes \Delta) \circ \Delta \\
			&= (\delta \otimes (id \otimes id)) \circ (id \otimes ((id \otimes \delta) \circ \Delta)) \circ \Delta \\
			&=(\delta \otimes ((id \otimes \delta) \circ \Delta)) \circ \Delta \\
			&= (id \otimes ((id \otimes \delta) \circ \Delta)) \circ (\delta \otimes id) \circ \Delta \\
			&=(id \otimes \omega_r) \circ \omega_l
		\end{align*}
	\end{proof}
	
	\begin{remark}
		In the theory of commutative rings, if $B$ is an $A$-algebra, it follows the multiplication of $B$ is compatible with action of $A$. Analogously, the retraction-induced bicomodule is compatible with the comonoid structure. The compatibility means the following equations. 
		\begin{equation}
			(\omega_r \otimes id) \circ \Delta = (id \otimes \omega_l) \circ \Delta
		\end{equation}
		If the category $\mathcal{C}$ is abelian, this equation means the comultiplication factors through ``cotensor" over $\mathbf{S}$. We will define what we mean by cotensor later.  Now we compute:
		\begin{align*}
			(\omega_r \otimes id) \circ \Delta &= ((id \otimes \delta)\circ \Delta) \otimes id)\circ \Delta \\
			&= ((id \otimes \delta) \otimes id) \circ (\Delta \otimes id) \circ \Delta \\
			&= (id \otimes (\delta \otimes id)) \circ (id \otimes \Delta) \circ \Delta \\
			&= (id \otimes (\delta \otimes id) \circ \Delta) \circ \Delta \\
			&= (id \otimes \omega_l) \circ \Delta
		\end{align*}
		\begin{equation}
			(id \otimes \Delta) \circ \omega_l = (\omega_l \otimes id) \circ \Delta
		\end{equation}
		\begin{align*}
			(id \otimes \Delta) \circ \omega_l &= (id \otimes \Delta) \circ (\delta \otimes id) \circ \Delta \\
			&= (\delta \otimes \Delta) \circ \Delta \\
			&= (\delta \otimes (id \otimes id)) \circ (id \otimes \Delta) \circ \Delta \\
			&= ((\delta \otimes id) \otimes id) \circ (\Delta \otimes id) \circ \Delta \\
			&= (((\delta \otimes id) \circ \Delta) \otimes id) \circ \Delta \\
			&= (\omega_l \otimes id) \circ \Delta 
		\end{align*}
		In a similar computation, it follows
		\begin{equation}
			(id \otimes \omega_r) \circ \Delta = (\Delta \otimes id) \circ \omega_r
		\end{equation}
		Moreover, both $\omega_l$ and $\omega_r$ act on $\mathbf{S}$ as its own comultiplication. Therefore, we have more equations
		\begin{equation}
			(id \otimes \omega_r) \circ \omega_r = (id \otimes \omega_l) \circ \omega_r = (id \otimes \Delta) \circ \omega_r = (\omega_r \otimes id) \circ \omega_r
		\end{equation}
		We will compute some of equations and the rest is left to the readers.  
		\begin{align*}
			(id \otimes \omega_r) \circ \omega_r &= (id \otimes (id \otimes \delta) \circ \Delta) \circ (id \otimes \delta) \circ \Delta \\
			&= (id \otimes (id \otimes \delta)) \circ (id \otimes \Delta) \circ (id \otimes \delta) \circ \Delta \\
			&= (id \otimes (id \otimes \delta))\circ (id \otimes (\Delta \circ  \delta)) \circ \Delta \\
			&=  (id \otimes (id \otimes \delta))\circ (id \otimes (\delta \otimes  \delta)) \circ (id \otimes \Delta) \circ \Delta \\
			&= (id \otimes (\delta \otimes (\delta \circ \delta))) \circ (id \otimes \Delta) \circ \Delta \\
			&= (id \otimes (\delta \otimes (\delta))) \circ (id \otimes \Delta) \circ \Delta \\
			&=  (id \otimes (\delta \otimes (\delta))) \circ (id \otimes \Delta) \circ \Delta \\
			&=  (id \otimes (\delta^2 \otimes \delta)) \circ (id \otimes \Delta) \circ \Delta \\
			&=  (id \otimes (\delta \otimes id))\circ (id \otimes (\delta \otimes  \delta)) \circ (id \otimes \Delta) \circ \Delta \\
			&= (id \otimes (\delta \otimes id))\circ (id \otimes \Delta) \circ (id \otimes \delta) \circ \Delta\\
			&= (id \otimes ((\delta \otimes id))\circ \Delta) \circ (id \otimes \delta) \circ \Delta \\
			&= (id \otimes \omega_l) \circ \omega_r
		\end{align*}
		\begin{align*}
			(id \otimes \omega_r) \circ \omega_r &= (id \otimes (\delta \otimes (\delta \circ \delta))) \circ (id \otimes \Delta) \circ \Delta \\
			&= (id \otimes (\delta \otimes \delta)) \circ (id \otimes \Delta) \circ \Delta \\
			&= (id \otimes \Delta) \circ (id \otimes \delta) \circ \Delta \\
			&= (id \otimes \Delta) \circ \omega_r 
		\end{align*}
		
		Also, in a similar computation
		\begin{equation}
			(\omega_l \otimes id) \circ \omega_l = (\omega_r \otimes id) \circ \omega_l = (\Delta \otimes id) \circ \omega_l = (id \otimes \omega_l) \circ \omega_l
		\end{equation}
	\end{remark}
	
	\begin{definition}
		The \textit{reduced decomposition} associated to the retraction $\delta$ is defined as $\overline{\Delta} = \Delta - \omega_r - \omega_l$.
	\end{definition}
	
	If the category $\mathcal{C}$ is an abelian monodial category, that is,  the following is a homomorphism of abelian groups.
	$$\mathcal{C}(\mathbf{X}, \mathbf{Y}) \otimes_{\mathbb{Z}} \mathcal{C}(\mathbf{X'}, \mathbf{Y'}) \to \mathcal{C}(\mathbf{X} \otimes \mathbf{X'}, \mathbf{Y} \otimes \mathbf{Y'})$$
	In particular, it implies the tensor product is linear in both entries. We call this condition \textit{biadditive}. 
	The reduced decomposition is coassociative in the following sense. It is a generalization of Lemma 2.19 in \cite{kaufmann_mo_2022}
	
	\begin{theorem} \label{1}
		If $\otimes$ is biadditive, $(\overline{\Delta} \otimes id) \overline{\Delta} = (id \otimes \overline{\Delta}) \overline{\Delta}$
	\end{theorem}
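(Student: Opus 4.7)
The plan is to use biadditivity of $\otimes$ to expand both sides of the claimed identity into signed sums of nine triple compositions, and then to reduce each term to one of eight canonical forms using coassociativity together with the compatibility identities~(2.8)--(2.13).

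Substituting $\overline{\Delta}=\Delta-\omega_r-\omega_l$ and distributing yields
\begin{align*}
(\overline{\Delta}\otimes id)\,\overline{\Delta}
&= (\Delta\otimes id)\Delta - (\Delta\otimes id)\omega_r - (\Delta\otimes id)\omega_l \\
&\quad - (\omega_r\otimes id)\Delta + (\omega_r\otimes id)\omega_r + (\omega_r\otimes id)\omega_l \\
&\quad - (\omega_l\otimes id)\Delta + (\omega_l\otimes id)\omega_r + (\omega_l\otimes id)\omega_l
\end{align*}
with a mirror expansion for $(id\otimes\overline{\Delta})\overline{\Delta}$. Introducing the shorthand $T_{abc}=(\delta^{a}\otimes\delta^{b}\otimes\delta^{c})\circ(\Delta\otimes id)\circ\Delta$ for $(a,b,c)\in\{0,1\}^{3}$, with $\delta^{0}=id$ and $\delta^{1}=\delta$, the task reduces to showing that each of the eighteen triple compositions equals some $T_{abc}$ and that the two signed sums coincide.

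The matching I would carry out proceeds as follows. Coassociativity gives $(\Delta\otimes id)\Delta=(id\otimes\Delta)\Delta=T_{000}$. The mixed ``$\Delta$-plus-one-$\omega$'' terms split into two groups: those reducing to single-$\delta$ canonical forms via~(2.9)--(2.11), namely $(\omega_l\otimes id)\Delta=(id\otimes\Delta)\omega_l=T_{100}$, $(\omega_r\otimes id)\Delta=(id\otimes\omega_l)\Delta=T_{010}$, and $(\Delta\otimes id)\omega_r=(id\otimes\omega_r)\Delta=T_{001}$; and those reducing to double-$\delta$ forms via $\Delta\circ\delta=(\delta\otimes\delta)\Delta$, namely $(\Delta\otimes id)\omega_l=T_{110}$ and $(id\otimes\Delta)\omega_r=T_{011}$. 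The double-$\omega$ terms collapse via~(2.12)--(2.13) combined with $\delta\circ\delta=\delta$: four forms each equal $T_{110}$ or $T_{011}$, while the bicomodule identity~(2.8) supplies the crucial matching $(\omega_l\otimes id)\omega_r=(id\otimes\omega_r)\omega_l=T_{101}$.

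After substitution and collection, both sides simplify to
\[
T_{000}-T_{100}-T_{010}-T_{001}+T_{110}+T_{101}+T_{011},
\]
with the three copies of $T_{110}$ on the LHS (with signs $-,+,+$) and the three copies of $T_{011}$ on the RHS collapsing to a single positive copy each; the absent $T_{111}$ never arises because every triple composition uses at most two instances of $\delta$. The main obstacle is purely organizational: keeping track of which of the eighteen compositions maps to which canonical $T_{abc}$ via precisely which identity from~(2.8)--(2.13), and verifying the final sign tally. Once this substitution table is drawn up, the coassociativity of $\overline{\Delta}$ becomes an immediate bookkeeping check that requires no element-wise reasoning and hence carries over verbatim to any biadditive monoidal abelian category.
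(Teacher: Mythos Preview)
Your proof is correct and takes essentially the same approach as the paper: expand both sides via biadditivity into nine terms each and simplify using coassociativity together with the identities (2.8)--(2.13). Your canonical-form device $T_{abc}=(\delta^{a}\otimes\delta^{b}\otimes\delta^{c})\circ(\Delta\otimes id)\circ\Delta$ is a tidier bookkeeping mechanism than the paper's term-by-term cancellation between the two expansions, and it exposes the inclusion--exclusion pattern $\sum_{(a,b,c)\neq(1,1,1)}(-1)^{a+b+c}T_{abc}$ explicitly, but the underlying computation and the identities invoked are the same.
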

	\begin{proof}
		The left hand side is \\
		$(\overline{\Delta} \otimes id) \circ \overline{\Delta} =
		(\Delta \otimes id) \circ \Delta - (\omega_r \otimes id) \circ \Delta - (\omega_l \otimes id) \circ \Delta -(\Delta \otimes id) \circ \omega_r + (\omega_r \otimes id) \circ \omega_r + (\omega_l \otimes id) \circ \omega_r 
		- (\Delta \otimes id) \circ \omega_l + (\omega_r \otimes id) \circ \omega_l + (\omega_l \otimes id) \circ \omega_l$ \\
		
		The right hand side is\\
		$(id \otimes \overline{\Delta}) \circ \overline{\Delta} =
		(id \otimes \Delta) \circ \Delta - (id \otimes \omega_r) \circ \Delta - (id \otimes \omega_l) \circ \Delta -(id \otimes \Delta) \circ \omega_r + (id \otimes \omega_r) \circ \omega_r + (id \otimes \omega_l) \circ \omega_r
		-(\Delta\otimes id) \circ \omega_l + (id \otimes \omega_r) \circ \omega_l + (id \otimes \omega_l) \circ \omega_l $ \\
		
		Note that $(\Delta \otimes id) \circ \Delta = (id \otimes \Delta) \circ \Delta$ by coassociavity. And most of the terms are cancelled by previous computation. We only need to check the following remaining terms.
		\begin{equation}
			(id \otimes \Delta) \circ \omega_l = (\omega_l \otimes id) \circ \Delta
		\end{equation}
		\begin{align*}
			(id \otimes \Delta) \circ \omega_l &= (id \otimes \Delta) \circ (\delta \otimes id) \circ \Delta  \\
			&= (\delta \otimes \Delta) \circ \Delta \\
			&= (\delta \otimes (id \otimes id)) \circ (id \otimes \Delta) \circ \Delta \\
			&= ((\delta \otimes id) \otimes id) \circ (\Delta \otimes id) \circ \Delta \\
			&= (((\delta \otimes id) \circ \Delta) \otimes id) \circ \Delta \\
			&= (\omega_l \otimes id) \circ \Delta
		\end{align*}
		By similar calculation as above
		\begin{equation}
			(id \otimes \omega_r) \circ \Delta = (\Delta \otimes id) \circ \omega_r
		\end{equation}
		After cancelling terms, we obtain $(\bar{\Delta} \otimes id) \bar{\Delta} - (id \otimes \bar{\Delta}) \bar{\Delta} = 0$ and the proof is completed
	\end{proof}
	
In this case, we will refer to $\bar{\Delta}$ as the \textit{reduced comultiplication}. There are two reasons for using the term "comultiplication." Firstly, it is coassociative. Secondly, when considering a coalgebra over a field, the retraction $\delta: C \to S$ leads to a splitting $C = S \oplus I$ for a certain coideal $I$. An elementary argument (which we will present later) shows that $I$ is closed under the reduced comultiplication induced by $I$. Consequently, $(I, \bar{\Delta})$ forms a coalgebra without a counit. \\
	
\subsubsection{Short discussion on simply colored cooperads} \label{33} {\ }
	
If $\mathds{1}$ is a unit object, it naturally possesses a comonoid structure induced by the left and right unitors $\mathds{1} \cong \mathds{1} \otimes \mathds{1}$, and the counit map is simply an isomorphism of comonoids. We can define the set-like comonoid $\mathbf{S}$ over a set $S$ in the category $\mathcal{C}$ as a direct sum of comonoids $\bigoplus_{s \in S}\mathds{1}_s$, where $S$ is a set and $\mathds{1} \cong \mathds{1}_s$ is an isomorphism of comonoids. It is expected that simply colored comonoids can be defined in any monoidal category with an abelian structure that may not necessarily be compatible. The challenge lies in finding a suitable description for the conilpotent condition. The following example of a simply colored cooperad serves as a starting point for the concept of simply colored comonoids. \\

We use the notations and definitions from \cite{LodayVallette}. The ground ring we consider is a field with characteristic zero. Recall that a cooperad is a comonoid in the category of $\mathbb{S}$-modules, which is both an abelian category and a monoidal category with $\bar{\circ}$ as the non-biadditive monoidal product. The construction we employ is similar to that of the conilpotent cooperad in \cite{LodayVallette}. We start with a cooperad $\mathbf{S}$ concentrated in arity one, where $\mathbf{S}(0) = C[S]$ is a set-like coalgebra over a set $S$. Next, we consider a cooperad $\mathbf{C}$ such that $\delta: \mathbf{C} \to \mathbf{S}$ is an operadic retraction. Similar to the coalgebra case, there exists a cooperadic coideal $\mathbf{I}$ such that $\mathbf{C} = \mathbf{S} \oplus \mathbf{I}$. We adopt Loday's method of defining the conilpotent condition, and we define the following concepts in order:
	$$\tilde{\Delta}^0 = id$$
	$$\tilde{\Delta}^1 = \Delta + (id \bar{\circ} \delta)\Delta$$
	Then, we can define it iteratively,
	$$\tilde{\Delta}^n = (id \bar{\circ} \tilde{\Delta})\tilde{\Delta}^{n-1}$$
	and 
	$$\hat{\Delta}^n = \tilde{\Delta}^n - (id \bar{\circ} ((id \bar{\circ} \delta)\Delta))\tilde{\Delta}^{n-1}$$
	This yields a filtration $F_0\mathbf{C} = \mathbf{S}$ and $F_n\mathbf{C} = \ker\hat{\Delta}^n$. 
	Then, we can define the simply colored cooperad is the pair $(\mathbf{C}, \delta)$ s.t. the filtration we just defined is exhaustive (our conilpotent condition). Note that by counit axiom, it can be shown that when $\mathbf{S} \cong \mathds{1}$ the monoidal unit , the right coaction $\omega_r = (id \bar{\circ} \delta)\Delta$ amounts to ``adding a level made up of the unique set-like $|$", which generalizes construction (see page 169 \cite{LodayVallette}) using $id \bar{\circ} \eta$ .

As a consequence of our generalization to arbitrary monoidal categories, one can extend the discussion for conilpotent cooperad in \cite{LodayVallette} to  simply colored cooperads, or even simply colored co-properads. This is due to the possibility to define operads and properads as plethysm monoids. So their dual structures are given by a comultiplication. The existence of such a plethysm construction more generally has been given in \cite{KMonaco22} in terms of a plus construction. The details of this connection will be studied in future work.

\subsection{Simply colored coalgebras}
We begin by introducing the concept of set-like elements.

\begin{definition} {\ }
\begin{enumerate}
\item An element $g$ in a coalgebra $C$ is referred to as \textit{set-like} if $\Delta(g) = g \otimes g \in C \otimes C$, and $\epsilon(g) = 1$.
\item The \textit{set-like} coalgebra over a set $G$ is a free module with the basis $G$, where $\Delta(g) = g \otimes g$ and $\epsilon(g) = 1$ for $g \in G$. In this paper, we will denote this coalgebra as $C[G]$.
\end{enumerate}
\end{definition}

It is important to note that $C[G] = \bigoplus_{g \in G} Rg$. There is a unique coalgebra structure on $R$ by considering $1$ as a set-like element since $R$ is unital. Recall that a retraction $\delta: C \to C[G]$ implies the existence of a well-defined reduced comultiplication $\overline{\Delta} = (id - \delta \otimes id - id \otimes \delta) \circ \Delta$. 

\begin{definition}
Let $C$ be a coalgebra, $G$ be a set of set-like elements in $C$, and $\delta: C \to C[G]$ be a retraction from $C$ to the set-like coalgebra over $G$. A \textit{simply colored coalgebra} is a triple $(C, G, \delta: C \to C[G])$ such that for every $x \in \ker \delta$, there exists an integer $N$ (which may depend on $x$) such that $\bar{\Delta}^N(x) = 0$.
\end{definition}

\begin{remark}
In the definition of a simply colored coalgebra $(C,G,\delta)$, we do not require $G$ to contain all set-like elements of $C$. However, the fact $G$ contains all set-like elements of $C$ follows from the conilpotent condition, as we will demonstrate in Corollary \ref{14}.
\end{remark}

\begin{example}
    A simple example of a simply colored coalgebra is a set-like coalgebra with a zero coideal. 
\end{example}

In principle, one can replace $C[G]$ with any coalgebra if the representation of that coalgebra is well-understood. Occasionally, we use the notation $C = C[G] \oplus I$ instead of $(C, G, \delta)$ (where $I = \ker \delta$) to represent a simply colored coalgebra.

The definition implies the following properties of set-like coalgebras and simply colored coalgebras.
	
	\begin{proposition} {\ }
		\begin{enumerate}
			\item Let $C = C[G] \oplus I$ be a simply colored coalgebra, then $ \overline{{\Delta}}(I) \subset I \otimes I \subset C \otimes C$. In fact, $I$ is a $C[G]$-subbicomodule of $C$.
			\item The tensor product of two set-like coalgebras equipped the canonical coalgebra structure \footnote{If $C$ and $D$ are two coalgebras, the tensor product $C \otimes D$ has a natural coalgebra structure: $$\Delta_{C \otimes D} = (id \otimes \tau_{12} \otimes id) \circ (\Delta_C \otimes \Delta_D)$$ and $$\epsilon_{C \otimes D} = \epsilon_C \otimes \epsilon_D$$ where $\tau_{12}(c \otimes d) = d \otimes c$ is a linear isomorphism on $C \otimes D$ to $D \otimes C$} is also set-like, and thus simply colored
			\item The tensor coalgebra of two simply colored coalgebras $C = C[G] \oplus I$ and $D = C[S] \oplus J$ is also a simply colored coalgebra.
		\end{enumerate}
	\end{proposition}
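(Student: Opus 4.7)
The plan is to establish the three parts in turn, with the last being the most delicate.

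For part (1), I would verify $\bar{\Delta}(I) \subset I \otimes I$ by testing $\bar{\Delta}(x)$ against the two projections $\delta \otimes \text{id}$ and $\text{id} \otimes \delta$. Writing $\bar{\Delta}(x) = \Delta(x) - \omega_l(x) - \omega_r(x)$, the idempotence $\delta \circ \delta = \delta$ makes $(\delta \otimes \text{id})\Delta(x)$ and $(\delta \otimes \text{id})\omega_l(x)$ coincide, while $(\delta \otimes \text{id})\omega_r(x) = (\delta \otimes \delta)\Delta(x) = \Delta \delta(x) = 0$ by the coalgebra-homomorphism property of $\delta$. A symmetric argument on the other side shows $\bar{\Delta}(x) \in (I \otimes C) \cap (C \otimes I)$, and the direct sum decomposition $C = C[G] \oplus I$ gives the four-way splitting $C \otimes C = C[G] \otimes C[G] \oplus C[G] \otimes I \oplus I \otimes C[G] \oplus I \otimes I$, so this intersection is $I \otimes I$. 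The companion identities $(\text{id} \otimes \delta)\omega_l(x) = \Delta \delta(x) = 0$ and $(\delta \otimes \text{id})\omega_r(x) = 0$ give at once that $\omega_l(I) \subset C[G] \otimes I$ and $\omega_r(I) \subset I \otimes C[G]$, so $I$ is a $C[G]$-sub-bicomodule.

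For part (2), the canonical tensor coalgebra structure on $C[G] \otimes C[G']$ yields $\Delta(g \otimes g') = (g \otimes g') \otimes (g \otimes g')$ and $\epsilon(g \otimes g') = 1$, so each $g \otimes g'$ is set-like. Hence $C[G] \otimes C[G']$ is freely spanned by the set-like elements $\{g \otimes g'\}$ and is isomorphic as a coalgebra to $C[G \times G']$, trivially simply colored with zero coideal.

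For part (3), put $\delta := \delta_C \otimes \delta_D \colon C \otimes D \to C[G] \otimes C[S] \cong C[G \times S]$. By (2) the codomain is set-like, and since the tensor product of coalgebra retractions is again a coalgebra retraction (take the tensor of the two sections), $\delta$ is one; its kernel decomposes as $K = C[G] \otimes J \oplus I \otimes C[S] \oplus I \otimes J$. A short Sweedler-style computation that exploits $\Delta_C(g) = g \otimes g$ reduces $\bar{\Delta}_{C \otimes D}$ on $g \otimes j$ to $g \otimes \bar{j}_{(1)} \otimes g \otimes \bar{j}_{(2)}$, where $\bar{\Delta}_D(j) = \bar{j}_{(1)} \otimes \bar{j}_{(2)}$. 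Iterating produces patterns of the form $g \otimes \bullet \otimes g \otimes \bullet \otimes \cdots$ with entries drawn from $\bar{\Delta}_D^N(j)$, so conilpotency of $g \otimes j$ is inherited from that of $j$ in $D$; the summand $I \otimes C[S]$ is handled symmetrically.

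The main obstacle is the summand $I \otimes J$. The key algebraic input is the identity $\bar{\Delta}_{C \otimes D} = \Delta_C \odot \Delta_D - \omega_l^C \odot \omega_l^D - \omega_r^C \odot \omega_r^D$, where $\odot$ denotes the combination $(\text{id} \otimes \tau \otimes \text{id}) \circ (- \otimes -)$; substituting $\Delta_C = \bar{\Delta}_C + \omega_l^C + \omega_r^C$ (and similarly for $D$) leaves seven cross-terms. Evaluated on $i \otimes j$, each of these seven summands lands in $K \otimes K$ with tensor factors either in the \emph{single-sided} pieces $C[G] \otimes J$ or $I \otimes C[S]$ already handled, or in $I \otimes J$ but built from Sweedler components of $\bar{\Delta}_C(i)$ or $\bar{\Delta}_D(j)$. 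I would then induct on a joint depth function such as $d_C(i) + d_D(j)$ with $d_C(i) := \min\{n : \bar{\Delta}_C^n(i) = 0\}$, arranging that every Sweedler summand produced by iterating $\bar{\Delta}_{C \otimes D}$ either falls into a single-sided piece, where finiteness is known, or into a piece of $I \otimes J$ with strictly smaller total depth, closing the induction and establishing conilpotency throughout $K$.
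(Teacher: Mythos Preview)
Your treatment of parts (1) and (2) is essentially the same as the paper's, with only a stylistic difference in (1): the paper invokes directly that $I$ is a coideal, so $\Delta(I)\subset I\otimes C + C\otimes I = I\otimes C[G] + C[G]\otimes I + I\otimes I$, and then reads off $\bar\Delta(I)\subset I\otimes I$ and the subbicomodule property from this decomposition; you instead test $\bar\Delta(x)$ against $\delta\otimes\mathrm{id}$ and $\mathrm{id}\otimes\delta$ using the idempotence and coalgebra-morphism property of $\delta$. Both are fine and amount to the same computation.

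For part (3) you actually go further than the paper. The paper's argument only verifies that $K=C\otimes J + I\otimes D$ is a coideal (it checks $\Delta(C\otimes J)\subset C\otimes D\otimes C\otimes J + C\otimes J\otimes C\otimes D$ and the symmetric containment for $I\otimes D$) and stops there; it never addresses the conilpotency condition that is part of the definition of a simply colored coalgebra. Your expansion of $\bar\Delta_{C\otimes D}$ into seven cross-terms and the induction on total depth is a genuine attempt to close that gap.

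One caution about your induction: the quantity $d_C(i)$ is defined on elements, but the ``Sweedler components'' of $\bar\Delta_C(i)$ are not uniquely determined, so the phrase ``strictly smaller total depth'' needs to be interpreted via the filtration rather than elementwise. A clean way to make your argument rigorous is to set $F^C_n=\ker\bar\Delta_C^{\,n}\subset I$ (and similarly $F^D_m$), observe from coassociativity that $\bar\Delta_C(F^C_n)\subset \sum_{p+q\le n} F^C_p\otimes F^C_q$, and then show that the seven-term formula gives $\bar\Delta_{C\otimes D}\bigl(F^C_n\otimes F^D_m\bigr)\subset \sum_{(p,q)+(p',q')\le (n,m)} (F^C_p\otimes F^D_q)\otimes(F^C_{p'}\otimes F^D_{q'})$ modulo the single-sided pieces $C[G]\otimes J$ and $I\otimes C[S]$, which you have already handled. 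With that reformulation your induction goes through and supplies exactly the conilpotency verification the paper omits.
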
 
	\begin{proof} {\ }
		\begin{enumerate}
			\item $I$ is a coideal, so $\Delta(I) \subset I \otimes C + C \otimes I = I \otimes C[G] + C[G] \otimes I + I \otimes I$. It is clear that $\bar{\Delta}(I) \subset I \otimes I$ and $(\delta \otimes id)\Delta(I) \subset C[G] \otimes I \subset C[G] \otimes C$ (and $(id \otimes \delta)\Delta(I) \subset I \otimes C[G] \subset C[G] \otimes C$.
			\item 	The tensor of two free modules is still free. So it suffices to show tensor of set-like elements is also set-like. We compute for $g$ set-like in $C$ and $s$ set-like in $D$:
			\begin{align*}
				\Delta_{C \otimes D}(g \otimes s) &= (id \otimes \tau_{12} \otimes id) \circ (\Delta_C \otimes \Delta_D)(g \otimes s) \\
				&= g \otimes s \otimes g \otimes s
			\end{align*}
			\item $C \otimes D = C[G] \otimes C[S] \oplus C[G] \otimes J \oplus I \otimes C[S] \oplus I \otimes J$. We showed $C[G] \otimes C[S]$ is also a set-like coalgebra. Then we need to show $C[G] \otimes J \oplus I \otimes C[S] \oplus I \otimes J = C \otimes J + I \otimes D$ is an coideal. The counit condition is straightforward. We finish the proof by checking the comultiplication requirement \newline $\Delta(C \otimes J) \subset C \otimes D \otimes C \otimes J + C \otimes J \otimes C \otimes D$ and $\Delta(I \otimes D) \subset I \otimes D \otimes C \otimes D + C \otimes D \otimes I \otimes D$. Therefore, $\Delta(C \otimes J + I \otimes D) \subset (C \otimes J + I \otimes D) \otimes (C \otimes D) + (C \otimes D) \otimes (C \otimes J + I \otimes D)$. 
		\end{enumerate}
	\end{proof}
	\begin{remark}
		 The statement in the above proposition is also true if $C[G]$ is replaced by any coalgebra $S$. 
	\end{remark}
	\begin{example}
		Every coaugmented conilpotent coalgebra is simply colored. Recall that an coaugmentation map is an coalgebra homomorphism $u: R \to C$ s.t. $\epsilon \circ u = id$. If ground ring $R$ is a field $k$, the condition $\epsilon \circ u = id$ is obvious and does not need to specify. This induces a splitting $C = Re \oplus \ker\epsilon$ in which $e = u(1)$. We claim $\ker\epsilon$ is an coideal. It is obvious that $\epsilon(\ker\epsilon) = 0$. It suffices to show $\Delta(\ker\epsilon) \subset \ker\epsilon \otimes C + C \otimes \ker\epsilon$. Let $x \in \ker\epsilon$. In general 
		$$\Delta(x) \in (Re \otimes Re) \oplus (Re \otimes \ker\epsilon) \oplus (\ker\epsilon \otimes Re) \oplus (\ker\epsilon \otimes \ker\epsilon)$$
		We only need the fact $\Delta(x)$ has no terms in $Re \otimes Re$. By the left counit identity $(\epsilon \otimes id) \circ \Delta (x) = x$, it follows there exists $r  \in R$ and $i \in \ker\epsilon$ such that $x = re + i$ and $\Delta(x) - r (e \otimes e) - (e \otimes i) \in (\ker\epsilon \otimes Re) \oplus (\ker\epsilon \otimes \ker\epsilon)$. However, $x \in \ker\epsilon$ implies $r = 0$. Thus, we show $\ker\epsilon$ is an coideal and the projection $\pi: C \to C/\ker\epsilon \cong Re$ is an retraction that agrees with our notion. The reduced comultiplication then becomes $\bar{\Delta}(x) = \Delta(x) - e \otimes x - x \otimes e$, which agrees with the usual notion of reduced comultiplication. 
	\end{example}
	
	The following proposition provides a major source of simply colored coalgebras.  
	\begin{proposition} \label{10}
		Any non-negatively graded coalgebra $C = \bigoplus_{i \geq 0} C(i)$ whose zeroth homogeneous component $C(0)$ is a set-like coalgebra $C[X]$ over some set $X$ is simply colored with the choice of the coideal $\bigoplus_{i > 0} C(i)$. We will call such graded coalgebras \textit{space-like}. 
	\end{proposition}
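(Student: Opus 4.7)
The plan is to take $\delta\colon C \to C(0) = C[X]$ to be the canonical projection onto the degree zero component, making $\ker \delta = \bigoplus_{i>0} C(i)$ the proposed coideal. The proof then splits into two tasks: first, verifying that $\delta$ is a coalgebra retraction; second, checking the conilpotency condition $\bar{\Delta}^N(x) = 0$ for each $x \in \ker \delta$.

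For the first task, I would use the two defining properties of a graded coalgebra: $\Delta(C(n)) \subseteq \bigoplus_{i+j=n} C(i) \otimes C(j)$ and $\epsilon(C(n)) = 0$ for $n > 0$. I would verify $(\delta \otimes \delta)\circ \Delta = \Delta \circ \delta$ piecewise. On $C(0) = C[X]$, both sides restrict to $\Delta|_{C[X]}$. On $C(n)$ with $n > 0$, the right side vanishes because $\delta$ kills $C(n)$, and the left side vanishes term by term, since every summand $C(i) \otimes C(j)$ with $i + j = n > 0$ contains at least one positive-degree factor on which $\delta$ is zero. The counit compatibility $\epsilon \circ \delta = \epsilon$ follows immediately from $\epsilon$ vanishing on positive degrees, and $\delta$ is by construction the left inverse of the inclusion $C[X] \hookrightarrow C$.

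For the second task, the key step is a grading bound on $\bar{\Delta}$. If $x \in C(n)$ with $n > 0$, I would decompose $\Delta(x) = \sum_{i+j=n} y_{ij}$ with $y_{ij} \in C(i) \otimes C(j)$, and observe that the correction terms in $\bar{\Delta} = \Delta - (\delta \otimes id)\Delta - (id \otimes \delta)\Delta$ extract precisely $y_{0n}$ and $y_{n0}$, because $\delta$ acts as the identity on $C(0)$ and as zero on $C(i)$ for $i > 0$. Hence
$$\bar{\Delta}(x) \in \bigoplus_{i+j=n,\, i,j > 0} C(i) \otimes C(j).$$
Iterating and invoking the coassociativity of $\bar{\Delta}$ from Theorem \ref{1}, an induction on $k$ shows that $\bar{\Delta}^k(x)$ lies in a sum of tensor products of $k+1$ factors of strictly positive degree with total degree $n$. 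Since $k+1$ strictly positive integers have sum at least $k+1$, this forces $\bar{\Delta}^k(x) = 0$ whenever $k \geq n$. For a general element $x \in \ker \delta$, writing $x = \sum_{i=1}^{N} x_i$ as a finite sum of homogeneous components with $N$ the largest degree appearing, linearity then gives $\bar{\Delta}^N(x) = 0$.

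The argument is essentially grading bookkeeping, and I do not anticipate any serious obstacle. The only point requiring care is the initial computation showing that the two correction terms in $\bar{\Delta}$ remove exactly the degenerate $(i=0)$ and $(j=0)$ pieces of $\Delta(x)$; everything else flows from the grading inductively.
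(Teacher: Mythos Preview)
Your proposal is correct and follows essentially the same approach as the paper: both arguments rest on the observation that $\bar{\Delta}$ removes the $C(0)\otimes C(n)$ and $C(n)\otimes C(0)$ pieces, so iterated $\bar{\Delta}$ lands in tensors of strictly positive-degree factors whose degrees sum to $n$, forcing vanishing once the number of factors exceeds $n$. The paper phrases this as an induction on the degree of $x$ while you induct on the number of iterations $k$, but this is cosmetic; your version is arguably cleaner, and you are more thorough in explicitly verifying that $\delta$ is a coalgebra retraction, which the paper takes for granted.
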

	\begin{proof} 
		It suffices to show the conilpotent condition. We prove by induction that if $x \in C(i)$ for $i > 0$, then $\bar{\Delta}^n(x) = 0$ for any $n \geq i$. The base case for $i = 1$ is straightforward. Suppose it is true for $i = k$. Let $x \in C(k+1)$, $\Delta(x) \in \sum_{i = 0}^{k+1} C(i) \otimes C(n-i)$. And $\bar\Delta(x) \in \sum_{i = 1}^{n-1} C(i) \otimes C(n-i)$ because the reduced comultiplication kills the parts $C(0) \otimes C(k+1)$ and $C(k+1) \otimes C(0)$. Each $C(i)$ will be eliminated by at most $i$-iteration or reduced comultiplication by induction hypothesis. Since reduced comultiplication is billinear and coassociative, it follows $x$ is eliminated by at most $k+1$-iterations of reduced comultiplication. 
	\end{proof}
	
\begin{example} {\ }
\begin{enumerate}
\item A connected coalgebra is simply colored. Examples of simply colored coalgebras include Connes-Kreimer's Hopf algebra, Goncharov's Hopf algebra for multiple zeta values, and Baues' Hopf algebra for double loop spaces \cite{IRA1}. These three Hopf algebras can be unified categorically through the framework of Feynman Categories \cite{IRA1}. Specifically, if a Feynman category is factorization finite and almost connected, the associated Hopf algebra (as shown in Theorem 1.1 of \cite{IRA2}) is connected and, therefore, simply colored. The three mentioned Hopf algebras can be constructed by selecting appropriate almost connected and factorization finite Feynman categories.

\item The path coalgebra associated with a quiver is simply colored. This is because it is known to be a (coradically) graded coalgebra whose zero homogeneous components are spanned by set-like elements.

\item The categorical coalgebra arising from a Feynman category \cite{kaufmann_mo_2022} (see Theorem 5.3) is simply colored. A similar notion of categorical coalgebra in \cite{catcoalg} is also simply colored.

\item The component coalgebra \cite{MayMoreConcise} is simply colored.

\end{enumerate}
\end{example}
	
	\begin{example}
		A non-trivial example of simply colored coalgebra. Let $C = \mathbf{Z} \oplus \mathbf{Z} \oplus \mathbf{Z}/4\mathbf{Z}$ be a $\mathbf{Z}$-module. We give a coalgebra structure on $C$ by setting $g = (1, 0, 0)$ and $h= (0, 1, 0)$ two different set-like elements and $x = (0, 0, 1)$ a $g$-primitive elements. $\mathbf{Z}g \oplus \mathbf{Z}h$ is a well-defined subcoalgebra of $C$ (because the natural inclusion splits) and there is a natural splitting of coalgebra onto the subcoalgebra just by projection where the coideal is precisely generated by $x$. The key is that we allow the existence of torsion elements in the coalgebra.
	\end{example}
	
	\begin{definition}
		Given $C$ a coalgebra and $A$ an algebra, the $R$-module $\Hom(C, A)$ is a $R$-algebra with unit $\eta \circ \epsilon$ and multiplication $\star$ given by $(f \star g) (x) = m \circ (f \otimes g) \circ \Delta (x) = \sum_{x}\,f(x_{(1)}) g(x_{(2)})$. We say $\{\Hom(C, A), \star, \eta \circ \epsilon\}$ is \textit{the convolution algebra}.  
	\end{definition}

	If for $f,g \in \Hom(C, A)$ there is $f \star g  = g \star f = \eta \circ \epsilon$, we say $f$ is $\star$-inverse (or convolution inverse) to $g$ and vice versa. The antipode of the Hopf algebra $H$ can be equivalently formulated as the unique convolution inverse to the identity $id$ in $\Hom(H, H)$.
	
	The following theorem is a modification of Takeuchi's lemma used in \cite{kaufmann_mo_2022} and it allows one to simplify the problem of finding convolution inverse.
	\begin{theorem} \label{2}
		Let $F_0$ and $C$ be two coalgebras such that $C_0$ is a direct summand of $C = F_0 \oplus M$ as $R$-module and $C_0$ has a unique subcoalgebra structure inherit from $C$. Suppose for any $x \in C$, there exists $N$ s.t. its image under the composition of canonical projection onto $F_0$ with (the kernel is $M$) and iterated comultiplications is zero. 
		\[\begin{tikzcd}
			C && {C^{\otimes N}} && {M^{\otimes N}}
			\arrow["{\pi^{\otimes N}}", from=1-3, to=1-5]
			\arrow["\Delta^N", from=1-1, to=1-3]
		\end{tikzcd}\]Then the following conditions are equivalent for any element $f$ of $Hom(C, A)$ with an algebra $A$:
		\begin{enumerate}
			\item $f$ is convolution invertible
			\item $f|_{F_{0}}$ is convolution invertible in $Hom(F_{0}, A)$
		\end{enumerate}
	\end{theorem}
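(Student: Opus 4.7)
My plan is to follow the spirit of Takeuchi's classical lemma. The implication (1) $\Rightarrow$ (2) is immediate by restriction, so the substantive content lies in using the conilpotency hypothesis to build a convolution inverse of $f$ on all of $C$ from a convolution inverse of $f|_{F_0}$.

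For (1) $\Rightarrow$ (2): since $F_0$ is a subcoalgebra of $C$, for $x \in F_0$ the coproduct $\Delta(x)$ lies in $F_0 \otimes F_0$, so the convolution inverse $g$ of $f$ restricts to a convolution inverse of $f|_{F_0}$ in $\Hom(F_0, A)$.

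The substantive direction is (2) $\Rightarrow$ (1). Let $g_0$ be the convolution inverse of $f|_{F_0}$ and let $p : C \to F_0$ denote the projection with kernel $M$. Extend $g_0$ to $\tilde{g} := g_0 \circ p \in \Hom(C, A)$ and set
$$h := \eta \circ \epsilon - f \star \tilde{g}.$$
A direct check on $F_0$, using $\Delta(F_0) \subset F_0 \otimes F_0$ and $\tilde{g}|_{F_0} = g_0$, shows $h|_{F_0} = 0$. Since $C = F_0 \oplus M$ as $R$-modules, this forces $h$ to factor as $h = h' \circ \pi$, where $\pi : C \to M$ is the projection with kernel $F_0$. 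Writing $\Delta^n : C \to C^{\otimes n}$ for the iterated comultiplication producing $n$ tensor factors, we expand
$$h^{\star n} \;=\; m^{n-1} \circ h^{\otimes n} \circ \Delta^n \;=\; m^{n-1} \circ (h')^{\otimes n} \circ \pi^{\otimes n} \circ \Delta^n.$$
By the conilpotency hypothesis, for each $x \in C$ there exists $N$ with $\pi^{\otimes N} \circ \Delta^N(x) = 0$, so $h^{\star N}(x) = 0$; thus $h$ is pointwise convolution nilpotent. The pointwise-finite sum $w := \sum_{n \geq 0} h^{\star n}$ is therefore a well-defined element of $\Hom(C, A)$, and it furnishes a convolution inverse of $f \star \tilde{g} = \eta\epsilon - h$. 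Setting $g_R := \tilde{g} \star w$ gives $f \star g_R = \eta\epsilon$, a right inverse of $f$. Running the symmetric argument on $\tilde{h} := \eta\epsilon - \tilde{g} \star f$ yields a left inverse $g_L$, and $g_R = g_L$ by associativity of $\star$.

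The main obstacle I anticipate is purely bookkeeping: confirming that the factorization $h = h' \circ \pi$ matches up correctly with the iterated coproduct convention in the hypothesis, so that the exponent $N$ guaranteed by the conilpotency assumption is exactly the one annihilating $h^{\star N}(x)$. Once that is in place, everything else reduces to the standard geometric-series formula $(\eta\epsilon - h)^{\star -1} = \sum_{n \geq 0} h^{\star n}$ interpreted pointwise in the convolution algebra.
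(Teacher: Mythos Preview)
Your proof is correct and follows exactly the classical Takeuchi argument that the paper invokes: the paper does not spell out the details but simply refers to \cite{kaufmann_mo_2022}, Theorem 3.3, noting that the conilpotency hypothesis replaces the original $\Ext$ vanishing condition, which is precisely what your geometric-series construction with $h = \eta\epsilon - f \star \tilde{g}$ exploits. Your bookkeeping on the factorization $h = h' \circ \pi$ and the matching of the exponent $N$ with the hypothesis is sound.
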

	When $C$ is an an coalgebra over a field and $F_0$ is the coradical, this is known as Takeuchi's lemma \cite{Takeuchi}. The proof for the modified version is the same as the one in theorem 3.3 \cite{kaufmann_mo_2022}. The key is we translate the condition $\Ext(C/F^{QT}_C, A) =0$ to the wedge product characterization.

	\begin{proposition} \label{3}
		Let $C = C[G] \oplus I$ be a simply colored coalgebra and $\pi: C \to I$ be the canonical projection induced by splitting. We have $\pi^{\otimes N}( \bar{\Delta}^N(x)) = \pi^{\otimes N}({\Delta}^N(x))$ for $x \in I$.  
	\end{proposition}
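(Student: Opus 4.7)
The plan is to exploit that $\delta$ and $\pi$ are complementary idempotents on $C$: since $C=C[G]\oplus I$, we have $\pi+\delta=id$, $\pi\delta=\delta\pi=0$, and $\pi$ restricts to the identity on $I$. The key observation is that each of $\omega_{l}=(\delta\otimes id)\Delta$ and $\omega_{r}=(id\otimes\delta)\Delta$ contains a $\delta$ in some tensor slot and is therefore annihilated once post-composed with $\pi\otimes\pi$. Combining $\bar{\Delta}=\Delta-\omega_{l}-\omega_{r}$ with $(\pi\otimes\pi)\omega_{l}=(\pi\delta\otimes\pi)\Delta=0$, and symmetrically for $\omega_{r}$, gives the ``one-step'' identity $(\pi\otimes\pi)\bar{\Delta}=(\pi\otimes\pi)\Delta$ on all of $C$. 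Specializing to $x\in I$ and using $\bar{\Delta}(I)\subset I\otimes I$ from the previous proposition (so that $\pi\otimes\pi$ acts as the identity on the output) gives in particular $\bar{\Delta}(x)=(\pi\otimes\pi)\Delta(x)$ for $x\in I$, which already handles the $N=2$ case.

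Next I would record a small auxiliary identity, $(\pi\otimes\pi)\Delta\circ\pi=(\pi\otimes\pi)\Delta$ on all of $C$. On $I$ this is tautological since $\pi|_{I}=id$; on $C[G]$ both sides vanish, because $\pi|_{C[G]}=0$ and because $\Delta(C[G])\subset C[G]\otimes C[G]$ is annihilated by $\pi\otimes\pi$. This identity lets me freely insert or delete a $\pi$ sitting between the comultiplication and a subsequent outer projection when chaining composites.

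The main argument is then an induction on $N$ showing $\bar{\Delta}^{N}(x)=\pi^{\otimes N}\Delta^{N}(x)$ for all $x\in I$. The base case $N=1$ is immediate since $\bar{\Delta}^{1}=\Delta^{1}=id$ and $\pi|_{I}=id$. For the inductive step, coassociativity of $\bar{\Delta}$ from Theorem \ref{1} allows me to write $\bar{\Delta}^{N}=(\bar{\Delta}\otimes id^{\otimes(N-2)})\circ\bar{\Delta}^{N-1}$. The inductive hypothesis rewrites $\bar{\Delta}^{N-1}(x)$ as $\pi^{\otimes(N-1)}\Delta^{N-1}(x)$; since the image of $\pi$ lies in $I$, the first tensor factor now belongs to $I$, and there the one-step identity replaces the outer $\bar{\Delta}$ by $(\pi\otimes\pi)\Delta$; the auxiliary identity then absorbs the spare $\pi$ on that slot, so the whole composition collapses to $\pi^{\otimes N}(\Delta\otimes id^{\otimes(N-2)})\Delta^{N-1}(x)=\pi^{\otimes N}\Delta^{N}(x)$. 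Because $\bar{\Delta}^{N}(x)\in I^{\otimes N}$ automatically yields $\pi^{\otimes N}\bar{\Delta}^{N}(x)=\bar{\Delta}^{N}(x)$, combining the two gives exactly the claim of the proposition.

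There is no genuine obstacle; the only delicate point is the bookkeeping in the inductive step, ensuring that the coassociativity of $\bar{\Delta}$ (established earlier in the abelian monoidal setting and hence available here) really does let the outer $\bar{\Delta}$ be fed directly by $\bar{\Delta}^{N-1}(x)$, so that the one-step identity and the induction hypothesis chain cleanly.
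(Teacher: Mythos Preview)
Your proof is correct and follows essentially the same idea as the paper's: both hinge on the observation that $\Delta-\bar{\Delta}=\omega_l+\omega_r$ lands in $C[G]\otimes C + C\otimes C[G]$ and is therefore annihilated by $\pi\otimes\pi$. The paper's argument is a two-line sketch (``the difference will be killed when passed to the quotient \ldots\ by the same reason''), whereas you supply the careful induction and the auxiliary identity $(\pi\otimes\pi)\Delta\circ\pi=(\pi\otimes\pi)\Delta$ that make the iterated statement rigorous; this is a welcome elaboration rather than a genuinely different route.
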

	\begin{proof}
		It follows because ${\Delta}(x) - \overline{\Delta}(x) \in C[G] \otimes C + C \otimes C[G]$ and the difference will be killed when passed to the quotient. ${\Delta}^N(x) - \bar{\Delta}^N(x)$ will be also eliminated when passed to the quotient by the same reason. 
	\end{proof}
	It follows that
	\begin{corollary} \label{14} {\ }
		\begin{enumerate}
			\item If $C = C[G] \oplus I$ is simply colored, then $G$ is the set of all set-likes.
			\item Given algebra $A$, any $f \in \Hom(C, A)$ is convolution invertible if $f|_{C[G]} \in \Hom(C|_{C[G]}, A)$ is convolution invertible.
			\item We call an bialgebra is simply colored if its coalgebra structure is simply colored. The simply colored bialgebra has an/the antipode if and only if the set of set-likes form a group under the multiplication. 
		\end{enumerate}
	\end{corollary}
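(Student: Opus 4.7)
All three parts reduce, via Proposition \ref{3}, to a single machinery: on the coideal $I$ the iterated reduced comultiplication and the iterated comultiplication agree after projection to $I^{\otimes N}$, so the conilpotent hypothesis in a simply colored coalgebra supplies exactly the ``iterated-comultiplication eventually vanishes'' input required by the Takeuchi-style Theorem \ref{2} with $F_0 = C[G]$ and $M = I$. The plan is to prove (1) by a direct calculation on an arbitrary set-like, then read off (2) and (3) as applications.

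For (1), I would fix any set-like $g\in C$ and decompose $g = g_0 + g_1$ with $g_0 := \delta(g) \in C[G]$ and $g_1 := g - g_0 \in \ker\delta = I$. Because $\delta$ is a coalgebra homomorphism, $g_0$ is set-like in $C[G]$, so $\Delta^N(g) = g^{\otimes N}$ and $\Delta^N(g_0) = g_0^{\otimes N}$ for every $N$, and hence
$$\pi^{\otimes N}(\Delta^N(g_1)) = \pi(g)^{\otimes N} - \pi(g_0)^{\otimes N} = g_1^{\otimes N}.$$
Proposition \ref{3} identifies this with $\pi^{\otimes N}(\bar{\Delta}^N(g_1))$, which the conilpotent condition forces to vanish for some $N$; from $g_1^{\otimes N} = 0$ we deduce $g_1 = 0$ and hence $g = g_0 \in C[G]$. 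It then remains to check that a set-like element sitting inside $C[G]$ must already lie in $G$: expanding $g_0 = \sum_h r_h h$ in the $G$-basis, the relation $\Delta(g_0) = g_0\otimes g_0$ forces the coefficients to be mutually orthogonal idempotents in $R$, and $\epsilon(g_0) = 1$ selects exactly one of them.

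For (2) I would apply Theorem \ref{2} directly with $F_0 = C[G]$ and $M = I$; the only clause needing verification is the vanishing of $\pi^{\otimes N}\circ\Delta^N$ on each element of $C$, which on $I$ is the computation above and on $C[G]$ is automatic since $C[G]$ is a subcoalgebra and $\pi|_{C[G]} = 0$, so the splitting $C = C[G]\oplus I$ together with biadditivity of $\otimes$ glues the two cases into a global statement. Part (3) is then a specialization of (2) to $A = H$ and $f = id_H$ for a simply colored bialgebra $H$: the antipode is by definition the $\star$-inverse of $id_H$, so its existence is equivalent to convolution invertibility of $id|_{C[G]}:C[G]\to H$, and unpacking that equation on a basis set-like $g\in G$ collapses to $g\cdot u(g) = u(g)\cdot g = 1$; conversely, if every $g\in G$ is invertible in $H$, the $R$-linear extension of $g\mapsto g^{-1}$ provides such a $u$. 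The main obstacle I anticipate is the implication $g_1^{\otimes N} = 0 \Rightarrow g_1 = 0$ in part (1), which is immediate over a field but is the one place where torsion in $R$ could interfere with a fully ring-theoretic version.
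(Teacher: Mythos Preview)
Your argument for (1) is essentially the paper's: decompose a set-like as $g_0 + g_1$ with $g_0\in C[G]$ and $g_1\in I$, use Proposition~\ref{3} to identify $\pi^{\otimes N}\Delta^N(g_1)$ with $\pi^{\otimes N}\bar{\Delta}^N(g_1)$, invoke conilpotency, and conclude $g_1^{\otimes N}=0$. The paper stops there, asserting $i^{\otimes N}\neq 0$ without comment and never addressing why a set-like element of $C[G]$ must lie in $G$ itself; you are more careful on both points. Your explicit verification of the Theorem~\ref{2} hypothesis for (2), and the unpacking of (3) on the basis $G$, are likewise more than the paper supplies (it writes only ``we only need to show (1)'').

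Two small remarks. First, your step ``$\epsilon(g_0)=1$ selects exactly one of them'' for the orthogonal idempotents $r_h$ needs $R$ to be connected (no nontrivial idempotents); over a product ring such as $k\times k$ one genuinely gets extra set-likes in $C[G]$ not lying in $G$, so the literal statement of (1) requires this hypothesis---a point the paper does not flag either. Second, your closing worry about $g_1^{\otimes N}=0\Rightarrow g_1=0$ is well placed: the paper's proof silently assumes it too, and it can fail for torsion elements (e.g.\ $2\otimes 2=0$ in $\mathbb{Z}/4\otimes_{\mathbb{Z}}\mathbb{Z}/4$). Over a field both issues evaporate, which is the setting where the corollary is principally used.
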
 
	\begin{proof}
		We only need to show (1). Let $t$ be an set-like not in $C[G]$. It cannot be in $I$ as well since $\epsilon(g) \neq 0$. So it has to be the form of $r + i$ for $ 0 \neq t \in C[G]$ and $0 \neq i \in I$. Since $\Delta^n(t) = t^{\otimes n}$, $\pi^{\otimes n}( \bar{\Delta}^n(t)) = \pi^{\otimes n}({\Delta}^n(t)) = i^{\otimes n} \neq 0$ for any $n \geq 0$. However, the conilpotent condition implies that every $i \in I$ can be eliminated by finite iterations of reduced comultiplication. Thus, $\pi^{\otimes N}( \bar{\Delta}^N(t)) = \pi^{\otimes N}(\bar{\Delta}^N(r)) + \pi^{\otimes N}(  \bar{\Delta}^N(t)) =\pi^{\otimes N}( \bar{\Delta}^N(t)) = 0$ for $N$ sufficiently large, which is an contradiction. 
	\end{proof}
	
	\subsubsection{Simply colored coalgebra over a field is equivalent to pointed coalgebra with a splitting}

  \begin{theorem}\label{11} {\ }
     Let $k$ be a field, 
     \begin{enumerate}
         \item Every simply colored coalgebra $(C, G, \delta)$ over $k$ can be viewed as a pointed coalgebra with a splitting $(C, \delta)$.
         \item Conversely, if $(C, \delta)$ is a pointed coalgebra with a splitting, it naturally gives rise to a simply colored coalgebra structure $(C, G, \delta)$, where $G$ is the set of all set-like elements in $C$.
     \end{enumerate}
 \end{theorem}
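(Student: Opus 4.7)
The plan is to exploit Corollary \ref{14}, which on the simply colored side identifies $G$ with the full set of set-likes of $C$, and to match the retraction $\delta$ with a splitting of the coradical using a filtration/rigidity argument in each direction.

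For part (1), suppose $(C, G, \delta)$ is simply colored; I aim to show $C_0 = C[G]$, after which $\delta \colon C \to C[G] = C_0$ is automatically a coradical splitting. Since each $kg$ ($g \in G$) is a one-dimensional, hence cosimple, subcoalgebra, the inclusion $C[G] \subseteq C_0$ is free. For the reverse inclusion, take any cosimple subcoalgebra $D \subseteq C$. The kernel of $\delta|_D$ is a subcoalgebra of $D$, hence $0$ or $D$. If $\ker(\delta|_D) = D$, then $D \subseteq I := \ker\delta$, and for $x \in D$ the inclusion $\Delta(x) \in D \otimes D \subseteq I \otimes I$ forces $\omega_l(x) = \omega_r(x) = 0$, so $\bar\Delta|_D = \Delta|_D$. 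Iterating gives $\bar\Delta^N(x) = \Delta^N(x)$; conilpotency yields $\Delta^N(x) = 0$, and applying counits recovers $x = 0$, contradicting $D \neq 0$. Hence $\delta|_D$ is injective, and $\delta(D)$ is a cosimple subcoalgebra of the cosemisimple coalgebra $C[G] = \bigoplus_{g \in G} kg$, so $\delta(D) = kg$ for a unique $g \in G$ and $D$ is one-dimensional. After rescaling a generator, $D = kd$ for a set-like element $d$, which by Corollary \ref{14} lies in $G$; since $\delta$ fixes $G$ pointwise, the equalities $\delta(d) = d$ and $\delta(D) = kg$ force $d = g$ and $D = kg \subseteq C[G]$. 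Therefore $C_0 = C[G]$ and $C$ is pointed.

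For part (2), let $(C, \delta)$ be a pointed coalgebra with a splitting, set $G$ to be the set-likes of $C$, and observe that pointedness gives $C_0 = C[G]$. Writing $C = C_0 \oplus I$ with $I = \ker\delta$, only the conilpotency of $I$ needs verification. First, for $x \in I$ decompose $\Delta(x)$ along $(C_0 \otimes C_0) \oplus (C_0 \otimes I) \oplus (I \otimes C_0) \oplus (I \otimes I)$: the $C_0 \otimes C_0$ component vanishes because $(\delta \otimes \delta)\Delta(x) = \Delta(\delta(x)) = 0$, while $\omega_l(x)$ and $\omega_r(x)$ pick out precisely the $C_0 \otimes I$ and $I \otimes C_0$ parts, so $\bar\Delta(x)$ equals the $I \otimes I$ component of $\Delta(x)$. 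Second, invoke the classical estimate $\Delta(C_n) \subseteq \sum_{i+j=n} C_i \otimes C_j$ for the coradical filtration: for $x \in C_n \cap I$, the terms that survive after projecting to $I \otimes I$ come only from indices $i, j \geq 1$, so $\bar\Delta(x) \in \sum_{i+j=n,\, i,j \geq 1} (C_i \cap I) \otimes (C_j \cap I)$. Iterating, $\bar\Delta^N(x)$ is a sum of $(N+1)$-fold tensors with factors in $C_{i_k} \cap I$ where each $i_k \geq 1$ and $\sum i_k \leq n$; this sum is empty once $N \geq n$, so $\bar\Delta^N(x) = 0$. Exhaustiveness of the coradical filtration then extends the conclusion to all of $I$.

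The main obstacle is in part (1): the retraction $\delta$ is a piece of structure, while the coradical is an intrinsic property, and one must use the conilpotency axiom as a rigidity statement to exclude a nontrivial cosimple subcoalgebra $D$ from lying inside $I$. The careful case analysis on $\ker(\delta|_D)$ combined with Corollary \ref{14} is what aligns structure with property; part (2) then reduces to the coradical filtration computation together with the simple identification $\bar\Delta|_I = (\pi_I \otimes \pi_I)\circ \Delta$ that falls out of the decomposition.
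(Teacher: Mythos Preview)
Your argument for part~(2) is correct and is essentially the paper's approach made self-contained: the paper cites \cite[Theorem~5.4.1]{Susan} for the inclusion $\bar\Delta(I_n)\subset I_{n-1}\otimes I_{n-1}$ (with $I_n=C_n\cap I$), which is exactly what your coradical-filtration computation establishes.

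Part~(1), however, contains a genuine gap. You assert that ``the kernel of $\delta|_D$ is a subcoalgebra of $D$, hence $0$ or $D$,'' but the kernel of a coalgebra homomorphism is a \emph{coideal}, not a subcoalgebra, and a cosimple coalgebra can have many proper nonzero coideals. For instance, in $M_2^c(k)$ the one-dimensional subspace $k\,e_{12}$ is a coideal (dually, it is the annihilator of the upper-triangular subalgebra of $M_2(k)$). So your dichotomy fails, and you cannot conclude that $\delta|_D$ is injective from $D\not\subset I$.

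The argument is repairable along the lines you intend. Replace $\ker(\delta|_D)$ by $D\cap C[G]$, which \emph{is} a subcoalgebra of the cosimple $D$ and therefore equals $0$ or $D$. If $D\cap C[G]=D$ you are done. If $D\cap C[G]=0$, then the projection $\pi_I|_D$ (not $\delta|_D$) is injective; writing $i=\pi_I(d)$ for $d\in D$ and using $\Delta(d)\in D\otimes D$, one checks inductively that $\bar\Delta^N(i)=(\pi_I)^{\otimes(N+1)}\Delta^N(d)$, and conilpotency together with injectivity of $(\pi_I)^{\otimes(N+1)}$ on $D^{\otimes(N+1)}$ forces $d=0$. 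This is a direct variant of what the paper does: the paper instead invokes Proposition~\ref{3} to show that the filtration $\{\wedge^n C[G]\}$ is exhaustive and then appeals to the standard fact (\cite[Exercise~4.1.9]{DR2}) that the coradical sits inside the bottom of any exhaustive coalgebra filtration, which yields $C_0\subset C[G]$ without ever touching an individual cosimple $D$.
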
  
	\begin{proof} {\ }
		\begin{enumerate}
			\item Let $C = C[G] \oplus I$ where $I = \ker \delta$. We show the coalgebra filtration formed by $\{\wedge^n(C[G])\}_{n \geq 0}$ is exhaustive. It follows directly from Proposition \ref{3}: if $ \bar{\Delta}^n(x) = 0$, then $x \in \wedge^n(C[G])$. By Exercise 4.1.9 in \cite{DR2}, the coradical $C_0 \subset C[G]$. Therefore, every cosimple subcoalgebra is one dimensional and $C$ is pointed.
			\item The first statement follows again from the fact every pointed coalgebra is isomorphic to a subcoalgebra of a path coalgebra associated to a quiver (proposition \ref{15}) and the fact subcoalgebra of coradically graded coalgebra is coradically graded (Exercise 4.4.6 \cite{DR2}). Now take any splitting $C = C_0 \oplus I$ of pointed coalgebras. The reduced comultiplication is induced by this splitting. By discussion of proof of theorem 5.4.1 in \cite{Susan}, it follows $\overline{\Delta}(I_n) \subset I_{n-1} \otimes I_{n-1}$ where $I_n = C_n \cap I$. Since $C_0 \cap I = \emptyset$, every element in $I$ is eliminated through some iterations of reduced comultiplication. 
		\end{enumerate}
		
	\end{proof}
	
	Therefore, we see simply colored coalgebra and pointed coalgebra with a splitting are the same thing. Since the subcoalgebra of pointed coalgebra over a field is well-defined and also pointed. It follows immediately that 
	\begin{corollary} {\ }
		\begin{enumerate}
			\item the subcoalgebra of simply colored coalgebra over a field can be realized to be a simply colored coalgebra over a field.
			\item A simply colored coalgebra over a field is a sum of finite dimensional simply colored subcoalgebras. Thus, every simply colored coalgebra over a field is a filtered limit of finite dimensional simply colored coalgebras. 
		\end{enumerate}
	\end{corollary}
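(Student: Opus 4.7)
The plan is to reduce both assertions to Theorem \ref{11}, which identifies simply colored coalgebras over a field with pointed coalgebras equipped with a splitting.

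For part (1), let $(C, G, \delta)$ be a simply colored coalgebra over $k$ and $D \subset C$ a subcoalgebra. By Theorem \ref{11}(1), $C$ is pointed. Any cosimple subcoalgebra of $D$ is also a cosimple subcoalgebra of $C$ and therefore one-dimensional, so $D$ is itself pointed. Invoking the result cited earlier in the paper from \cite{Susan} that the coradical of every pointed coalgebra over a field admits a splitting, I would choose any coalgebra retraction $\delta_D: D \to D_0$. Then $(D, \delta_D)$ is a pointed coalgebra with a splitting, and Theorem \ref{11}(2) yields a simply colored structure $(D, G_D, \delta_D)$ on $D$, where $G_D$ is the set of set-like elements of $D$.

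For part (2), I would invoke the fundamental theorem of coalgebras: every element of $C$ lies in some finite-dimensional subcoalgebra, so $C = \sum_\alpha D_\alpha$, where $\{D_\alpha\}$ ranges over the finite-dimensional subcoalgebras of $C$. Part (1) endows each $D_\alpha$ with a simply colored structure, giving the first assertion. The poset of finite-dimensional subcoalgebras is directed under inclusion (the sum of two finite-dimensional subcoalgebras is again finite-dimensional), exhibiting $C$ as the corresponding filtered colimit.

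The main subtlety sits in part (2): the splittings produced by part (1) on distinct subcoalgebras are chosen independently, so the inclusions $D_\alpha \hookrightarrow D_\beta$ need not intertwine the chosen splittings $\delta_{D_\alpha}$ and $\delta_{D_\beta}$. Hence the filtered colimit description holds unambiguously at the level of underlying coalgebras, while its upgrade to a colimit in the category of simply colored coalgebras (with morphisms respecting splittings) relies on the cocompleteness theorem established in the next section of the paper.
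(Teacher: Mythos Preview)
Your argument for part (1) is correct, but it diverges from the paper's in one important respect: rather than invoking the abstract existence of a splitting on $D$ from \cite{Susan}, the paper \emph{restricts the ambient retraction} $\delta$ to $D$. Concretely, using Proposition~3.4.3 of \cite{DR2} one has $D_0 = D\cap C_0 = D\cap C[G]$, and the paper asserts that $\delta|_D:D\to D_0$ is already a splitting of the coradical of $D$. Your approach avoids having to check that $\delta(D)\subseteq D$, at the cost of producing a splitting with no relation to the one on $C$.

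That cost is exactly what bites you in part (2). You correctly diagnose that your independently chosen $\delta_{D_\alpha}$ need not be intertwined by the inclusions $D_\alpha\hookrightarrow D_\beta$, but your proposed remedy---appealing to cocompleteness of the category of simply colored coalgebras---does not actually repair this. Cocompleteness guarantees that diagrams \emph{in that category} have colimits; it says nothing when the inclusion maps fail to be morphisms in the category to begin with, which is precisely your situation. The paper's route sidesteps the problem entirely: since every $\delta_{D_\alpha}$ is the restriction of the single $\delta$ on $C$, the inclusions automatically respect the splittings, and the filtered system lives in the category of simply colored coalgebras from the start. So the paper's choice to restrict $\delta$ is not incidental---it is what makes the filtered colimit statement go through without further work.
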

\begin{proof} {\ }
\item    Consider a subcoalgebra $D$ of a pointed coalgebra with a splitting $C = C[G] \oplus I$. According to Proposition 3.4.3 \cite{DR2}, we have $D \cap C_0 = D \cap C[G] = D_0$. Thus, $D_0 \subseteq C[G]$ is also pointed, and in particular, a set-like coalgebra since $C[G]$ is cosemisimple. The splitting restricts to $D$ as $\delta|_{D}: D \to D_0$, resulting in a pointed subcoalgebra with a splitting, denoted as $(D, \delta|_{D})$.

\item By the fundamental theorem of coalgebras, every element $x$ in the coalgebra $C$ belongs to a finite-dimensional subcoalgebra. Moreover, any subcoalgebra of a pointed coalgebra is also pointed. Therefore, for any element $x$, there exists a finite-dimensional pointed subcoalgebra with a splitting where $x$ resides. Consequently, the pointed coalgebra $(C, I)$ with a splitting can be regarded as the sum of all finite-dimensional pointed subcoalgebras with a splitting.
	\end{proof}
	\begin{example}
		As a result, the pointed curved coalgebra in \cite{ckd} is also a simply colored coalgebra. 
	\end{example}
	
	\subsubsection{How is simply colored coalgebra `colored'?} \label{12} 
The question has already been addressed in \cite{kaufmann_mo_2022} through the use of orthogonal projectors. This method involves a specific choice of an orthogonal idempotent family, which has been extensively studied in the context of pointed coalgebras (e.g., see \cite{pointed} and Chapter 4 of \cite{DR2}). In this section, we provide a summary of the results presented in the aforementioned papers and books.

When $A = R$, the convolution algebra $C^* = \Hom_R(C, R)$ is referred to as the \textbf{dual algebra} of $C$. The multiplication in $C^*$ is inherited from the multiplication in $R$, thus we omit the star notation. Any right $C$-comodule $M$, which is also an $R$-module, induces a left $C^*$-module structure on $M$ defined as $f \cdot m = (id \otimes f) \rho_r(m)$. Similarly, any left $C$-comodule $M$ gives rise to a right $C^*$-module structure on $M$ defined as $m \cdot f = (f \otimes id) \rho_l(m)$.
	
\begin{definition} \cite{DR2}
		Let $C$ be an coalgebra, An \textbf{orthogonal family of idempotents} $\mathcal{E}$ of $C^*$ is a family of element $\{e_i\}_{i \in I}$ of $C^*$ s.t. $e_ie_i = e_i$ and $e_ie_j = 0$ for $i \neq j$. Furthermore, a orthogonal family of idempotents $\mathcal{E}$ is called \textbf{orthonormal} if the counit $\epsilon = \sum_{i \in I}e_i$ \footnote{By the fundamental theorem of coalgebras, a coalgebra is a direct limit of finite-dimensional (sub)coalgebras. Thus, its dual becomes an inverse limit of algebras, which possesses a topology that allows for certain infinite sums to make sense. This algebra is sometimes referred to as a \textit{pseudocompact} algebra in the literature.}.
	\end{definition}

	Denote $L_{e}(c) = e \cdot c$ and $R_{e^g} = c \cdot  e$ for $e \in \mathcal{E}$. The following identites are true (see pp 132 and pp 110 in \cite{DR2} for detail). It still works if the ground ring is only assumed to be commutative unital.
	\begin{lemma} \label{5} \cite{DR2} {\ }  
		\begin{enumerate}
			\item $L_e \circ L_{e'} = \delta_{e, e'}L_e$ and $R_e \circ R_{e'} = \delta_{e, e'}R_e$
			\item $\sum_{e \in \mathcal{E}}L_e = I_C = \sum_{e \in \mathcal{E}}R_e$ where $I_C$ is the identity action on $C$ of $C^*$.
			\item $(id \otimes L_{e}) \circ \Delta = \Delta \circ L_{e}$ and $(R_{e} \otimes id) \circ \Delta = \Delta \circ R_{e}$ 
			\item $\sum_{e \in \mathcal{E}}(L_e \otimes R_e) \circ \Delta = \Delta$
			\item $L_e \circ R_{e'} = R_{e'} \circ L_e$
		\end{enumerate}
	\end{lemma}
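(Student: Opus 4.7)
The plan is to unpack the definitions so that, in Sweedler notation, $L_e(c) = \sum c_{(1)} e(c_{(2)})$ and $R_e(c) = \sum e(c_{(1)}) c_{(2)}$, recognizing these as the left and right module actions of the convolution algebra $C^*$ on $C$ coming from $\Delta$ viewed as a (right, resp.\ left) coaction. With this dictionary, items (1), (2), (3), and (5) are formal consequences of coassociativity together with the two defining properties of $\mathcal{E}$: idempotence ($ee' = \delta_{e,e'}e$, where multiplication in $C^*$ is convolution) and orthonormality ($\sum_{e\in\mathcal{E}} e = \epsilon$).

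For (1), I would expand by coassociativity to obtain
\[
L_e \circ L_{e'}(c) = \sum c_{(1)} e(c_{(2)}) e'(c_{(3)}) = L_{ee'}(c) = \delta_{e,e'} L_e(c),
\]
and a symmetric computation handles $R_e \circ R_{e'}$. For (2), $\sum_e L_e(c) = \sum c_{(1)} (\sum_e e)(c_{(2)}) = \sum c_{(1)}\epsilon(c_{(2)}) = c$ by the counit axiom; the $R$-version is identical. For (3), both sides of $\Delta \circ L_e = (id \otimes L_e)\circ\Delta$ reduce, via Sweedler expansion, to $\sum c_{(1)} \otimes c_{(2)} e(c_{(3)})$, and the $R$-identity is analogous. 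For (5), applying the formulas for $L_e$ and $R_{e'}$ and using coassociativity shows that both $L_e R_{e'}(c)$ and $R_{e'} L_e(c)$ equal $\sum e'(c_{(1)}) c_{(2)} e(c_{(3)})$, since $L_e$ and $R_{e'}$ act on disjoint Sweedler slots and therefore commute.

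The only substantive piece is (4). The cleanest route, in my view, is the rewriting
\[
(L_e \otimes R_e) \circ \Delta = (id \otimes e \otimes e \otimes id) \circ \Delta^{(3)} = (id \otimes (e \star e) \otimes id) \circ \Delta^{(2)},
\]
where $\Delta^{(n)}$ denotes the $n$-fold iterated comultiplication landing in $C^{\otimes(n+1)}$. The first equality unpacks $L_e$ and $R_e$; the second collapses the inner $e \otimes e$ composed with one further comultiplication into $e \star e$, via coassociativity. Idempotence gives $e \star e = e$, and then summing and using orthonormality together with the counit axiom on the middle slot of $\Delta^{(2)}$ gives
\[
\sum_{e\in\mathcal{E}} (L_e \otimes R_e) \circ \Delta = (id \otimes (\textstyle\sum_e e) \otimes id) \circ \Delta^{(2)} = (id \otimes \epsilon \otimes id) \circ \Delta^{(2)} = \Delta.
\]

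The main (rather mild) obstacle lies in (4): one must be careful to justify the reassociation from $\Delta^{(3)}$ to $\Delta^{(2)}$ and the identification of $e \otimes e$ precomposed with the reintroduced middle $\Delta$ with the convolution $e \star e$; both are applications of coassociativity, but the bookkeeping with iterated Sweedler indices is what makes the argument look longer than it is. Once this reformulation is in place, the rest is a one-line consequence of $e \star e = e$ and $\sum_e e = \epsilon$.
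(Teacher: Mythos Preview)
Your proof is correct: the Sweedler-notation computations you give for (1)--(3) and (5) are standard and valid, and your treatment of (4) via the identity $(L_e\otimes R_e)\circ\Delta = (id\otimes e\otimes e\otimes id)\circ\Delta^{(3)} = (id\otimes(e\star e)\otimes id)\circ\Delta^{(2)}$ is a clean way to reduce the statement to idempotence plus orthonormality plus the counit axiom. The paper itself does not supply a proof of this lemma---it simply cites \cite{DR2} (pp.~110 and 132)---so there is no in-paper argument to compare against; your write-up is exactly the kind of direct verification one would expect in that reference.
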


For a simply colored coalgebra $C = C[G] \oplus I$, we can choose the orthogonal family to be ${\delta_{g}}_{g \in G}$, where $e_g \in C[G]^*$ represents the dual basis to $g$. This choice is sensible due to the hypothesis that $C[G]$ is free. It can be verified that this family is also orthonormal. As a consequence of the previous lemma, we have the following proposition:

\begin{proposition}\label{4}
An $C[G]$-bicomodule $M$ can be decomposed as $M = \bigoplus_{g, g' \in G}{^gM^{g'}}$, where $^gM^{g'} = e_{g'} \cdot M \cdot e_g$. Moreover, it has an explicit description ${^gM^{g'}} = {m \in {^gM^{g'}}: \rho_l(m) = g \otimes m, \rho_r(m) = m \otimes g'}$.
\end{proposition}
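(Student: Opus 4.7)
The plan is to build the left and right decompositions separately, exploiting the free structure of $C[G]$, and then glue them via the bicomodule compatibility axiom.

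First, I would establish the left-comodule decomposition $M = \bigoplus_{g \in G} {^gM}$, where ${^gM} = \{m \in M : \rho_l(m) = g \otimes m\}$. Since $C[G] = \bigoplus_{g \in G} Rg$ is free on $G$, for any $m \in M$ there exist uniquely determined $m^g \in M$ with $\rho_l(m) = \sum_g g \otimes m^g$. The counit axiom $(\epsilon \otimes id)\rho_l = id$ yields $m = \sum_g m^g$, giving the spanning. Coassociativity $(\Delta \otimes id)\rho_l = (id \otimes \rho_l)\rho_l$, combined with linear independence of $G$ in the first tensor factor, forces $\rho_l(m^g) = g \otimes m^g$, so each $m^g$ lies in ${^gM}$. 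Directness of the sum follows by applying $\rho_l$ to any relation $\sum_g m^g = 0$ and invoking linear independence once more. The symmetric argument yields the right decomposition $M = \bigoplus_{g' \in G} M^{g'}$.

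Second, I would combine the two decompositions via the bicomodule axiom $(\rho_l \otimes id)\rho_r = (id \otimes \rho_r)\rho_l$. For $m \in {^gM}$, computing both sides and equating in $C[G] \otimes M \otimes C[G]$, the linear independence of $G$ in the first factor shows that $\rho_r({^gM}) \subset {^gM} \otimes C[G]$; that is, the right coaction preserves the left-graded pieces. Decomposing each ${^gM}$ further under $\rho_r$ produces $M = \bigoplus_{g, g' \in G} {^gM^{g'}}$, with the claimed characterization ${^gM^{g'}} = \{m : \rho_l(m) = g \otimes m,\ \rho_r(m) = m \otimes g'\}$.

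Finally, I would identify ${^gM^{g'}}$ with $e_{g'} \cdot M \cdot e_g$ by unpacking the conventions introduced earlier: $e_{g'} \cdot m = (id \otimes e_{g'})\rho_r(m)$ extracts the $g'$-component of $\rho_r(m)$, while $m \cdot e_g = (e_g \otimes id)\rho_l(m)$ extracts the $g$-component of $\rho_l(m)$. These two operations act on different tensor slots after applying the bicomodule axiom (the direct analogue of Lemma \ref{5}(5) for bicomodules), so they commute, and their composition is precisely the projection onto ${^gM^{g'}}$ in the bigraded decomposition just constructed. The main obstacle, though modest, is the compatibility step: one must use the bicomodule axiom together with linear independence of $G$ in \emph{both} tensor factors to conclude that each ${^gM}$ is stable under $\rho_r$. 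Once this is secured, everything else reduces to bookkeeping enabled by the freeness of $C[G]$ and the orthonormality of the idempotent family $\{e_g\}_{g \in G}$.
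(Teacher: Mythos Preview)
Your proposal is correct. It takes the direct route that the paper itself flags in the Remark immediately following the proposition: ``Alternatively, one can directly prove the result using the definition of a bicomodule.'' The paper's own argument is terser and more algebraic---it simply invokes Lemma~\ref{5} on the orthonormal idempotent family $\{e_g\}$: parts (1) and (2) give idempotence, orthogonality, and $\sum_g L_{e_g} = id = \sum_g R_{e_g}$, which immediately yield the direct-sum decomposition, while part (5) ($L_e \circ R_{e'} = R_{e'} \circ L_e$) handles the compatibility of the two gradings. Your approach instead unpacks the comodule and bicomodule axioms by hand, using the freeness of $C[G]$ to extract components and linear independence of $G$ to separate them; you only appeal to the idempotent picture at the end to match the stated description $^gM^{g'} = e_{g'}\cdot M \cdot e_g$. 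Both routes are short; the paper's buys uniformity (the same lemma drives several later computations), while yours is self-contained and makes explicit why freeness of $C[G]$ and the bicomodule axiom are the essential hypotheses---precisely the point the paper's Remark emphasizes.
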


The key observation is that every left/right/bi-comodule over $C[G]$ is cosemisimple. In particular, a $C[G]$-bicomodule is equivalent to a $G$-bigraded module.

\begin{remark}
Alternatively, one can directly prove the result using the definition of a bicomodule. However, it is important to note that in this case, $ {^g}M \cap M^{g'} = {^gM^{g'}}$ holds due to the orthonormal condition. It is not true that the equality holds in general, and one cannot deduce it from the same argument presented in Example 1.6.7 in \cite{Susan}. The conditions of freeness of $C[G]$ and being a bicomodule are essential for the result.
\end{remark}
	
	Move back to the main discussion. The splitting $C = C[G] \oplus I$ endows $C$ with a $C[G]$-bicomodule structure and give rise to a reduced comultiplication. 
	\begin{proposition}
		Given a splitting $C = C[G] \oplus I$, where $I$ is an coideal, it follows
		\begin{enumerate}
			\item $\Delta({^{g'}C^g}) \subset \sum_{s \in G}{^{g'}C^s} \otimes {^{s}C^g}$
			\item $I = \oplus_{g, g' \in G}({^{g'}C^g} \cap I)$ and $I \cap {^{g'}C^g} = {^{g'}C^g}$ for $g' \neq g$
			\item As a result of (1) and (2), for $ x \in {^{g'}C^g}$, we have both $\Delta(x) - g' \otimes x \in I \otimes C$ and $\Delta(x) - x \otimes g \in C \otimes I$. 
			
		\end{enumerate}
	\end{proposition}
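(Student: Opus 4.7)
The plan is to prove the three claims in the order (3), (1), (2). Item (3) is essentially a direct consequence of the identification of ${^{g'}C^g}$ given in Proposition~\ref{4}; (1) combines that identification with the orthonormal idempotent decomposition of $C \otimes C$ supplied by Lemma~\ref{5}; and (2) follows from a short linear-independence argument inside $C[G]$.

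For (3), I would decompose $\Delta(x) \in C \otimes C = (C[G] \otimes C) \oplus (I \otimes C)$ and apply $\delta \otimes id$. Since $\delta$ is the identity on $C[G]$ and zero on $I$, the $C[G] \otimes C$-summand is preserved while the $I \otimes C$-summand is killed, so the $C[G] \otimes C$-summand of $\Delta(x)$ must equal $(\delta \otimes id)\Delta(x) = \omega_l(x) = g' \otimes x$, using the characterization of ${^{g'}C^g}$ in Proposition~\ref{4}. Hence $\Delta(x) - g' \otimes x \in I \otimes C$. The dual statement $\Delta(x) - x \otimes g \in C \otimes I$ follows symmetrically by decomposing along $C \otimes C = (C \otimes C[G]) \oplus (C \otimes I)$ and applying $id \otimes \delta$.

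For (1), I would first observe that $\Delta$ is a $C[G]$-bicomodule map when $C \otimes C$ is equipped with the bicomodule structure $(\omega_l \otimes id,\, id \otimes \omega_r)$; this is precisely the pair of identities $(\omega_l \otimes id)\Delta = (id \otimes \Delta)\omega_l$ and $(id \otimes \omega_r)\Delta = (\Delta \otimes id)\omega_r$ already established in the remarks following the $\mathbf{S}$-bicomodule proposition. Evaluating on $x \in {^{g'}C^g}$, these give $(\omega_l \otimes id)\Delta(x) = g' \otimes \Delta(x)$ and $(id \otimes \omega_r)\Delta(x) = \Delta(x) \otimes g$, so that $\Delta(x)$ lies in the $(g', g)$-component of $C \otimes C$, which by Proposition~\ref{4} equals ${^{g'}C} \otimes C^g$. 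To refine the middle factor, I would then invoke Lemma~\ref{5}(iv), namely $\sum_{s \in G}(L_{e_s} \otimes R_{e_s}) \Delta = \Delta$, whose $s$-summand lies in $C^s \otimes {^sC}$. Intersecting with ${^{g'}C} \otimes C^g$ yields $\Delta(x) \in \bigoplus_{s \in G} {^{g'}C^s} \otimes {^sC^g}$, which is the claim.

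For (2), Proposition~\ref{4} supplies $C = \bigoplus_{g,g' \in G} {^{g'}C^g}$, and I would show that this refines the splitting $C = C[G] \oplus I$. An immediate check gives $kg \subseteq {^gC^g}$ from $\Delta(g) = g \otimes g$; for $g \neq g'$, any $y = \sum_s \alpha_s s \in C[G] \cap {^{g'}C^g}$ would have to satisfy $\sum_s \alpha_s(s \otimes s) = g' \otimes y$, and the linear independence of the distinct simple tensors $s \otimes s$ forces every $\alpha_s = 0$. Hence ${^{g'}C^g} \subseteq I$ and $I \cap {^{g'}C^g} = {^{g'}C^g}$ when $g \neq g'$, and summing across all $(g, g')$ yields the direct-sum decomposition of $I$. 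The main obstacle I expect is the bookkeeping in (1): correctly identifying the bicomodule structure on $C \otimes C$ that makes $\Delta$ a bicomodule morphism, and then reconciling this outer $G$-bigrading with the middle $G$-grading produced by the orthonormal idempotent family, so that the two decompositions combine cleanly into the stated refinement.
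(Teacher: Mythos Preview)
Your overall strategy is sound and in places cleaner than the paper's: proving (3) directly from $\omega_l(x)=g'\otimes x$ together with the splitting $C\otimes C=(C[G]\otimes C)\oplus(I\otimes C)$, rather than as a corollary of (1) and (2), is a genuine simplification. Your argument for (1) via the bicomodule-morphism identities and Lemma~\ref{5}(iv) is essentially what the paper intends when it cites Lemma~\ref{5} and Proposition~\ref{4}; just note that the reason each $(L_{e_s}\otimes R_{e_s})$ preserves ${^{g'}C}\otimes C^g$ is Lemma~\ref{5}(v), the commutation of $L$ and $R$.

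There is, however, a real gap in your proof of (2). A minor point first: the equation $\sum_s\alpha_s(s\otimes s)=g'\otimes y$ only forces $\alpha_s=0$ for $s\neq g'$; the term $\alpha_{g'}g'$ survives, and you need the right-coaction condition $\omega_r(y)=y\otimes g$ to kill it (using $g\neq g'$). More seriously, from $C[G]\cap{^{g'}C^g}=0$ you conclude ${^{g'}C^g}\subset I$, but trivial intersection with one direct summand does \emph{not} in general imply containment in the complementary summand. What makes it work here is that $I$ is a $C[G]$-sub-bicomodule (since $I$ is a coideal, $\omega_l(I)\subset C[G]\otimes I$ and $\omega_r(I)\subset I\otimes C[G]$), so the retraction $\delta$ respects the bigrading: $\delta({^{g'}C^g})\subset C[G]\cap{^{g'}C^g}=0$, whence ${^{g'}C^g}\subset\ker\delta=I$. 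The paper's proof exploits exactly this sub-bicomodule property, letting the idempotents act on $I$ to obtain its decomposition directly from Proposition~\ref{4}. Once you insert this observation your argument goes through; you should also say a word about the diagonal case ${^gC^g}=kg\oplus({^gC^g}\cap I)$ before ``summing across all $(g,g')$''.
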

	\begin{proof} {\ }
		\begin{enumerate}
			\item It follows from Lemma \ref{5} and Proposition \ref{4}. 		
			\item For $i \in I$, we know $i = \epsilon \cdot  i \cdot  \epsilon$. Let $i_{gg'} = e_g' \cdot  i \cdot  e_g$ so $i = \sum_{g, g' \in G}i_{gg'}$ a direct sum where $i_{gg'} = e_g(i_1)e_{g'}(i_2)i_3$. Note if $g' \neq g$, $i_{gg'} \in g \cdot  (C[G]) \cdot  g' \oplus g \cdot  I \cdot  g' = g \cdot  I \cdot  g'$. If $g = g'$, then $i_{gg'} = c_g g \oplus i'_{gg'}$ where $g \in G$ and $i'_{gg'} \in g \cdot  I \cdot  g' \subset I$. $C[G] \oplus I = C$ implies $c_g = 0$ for all $g$. We conclude then $I = \oplus_{g, g' \in G} (I \cap {^{g'}C^g})$. To show ${^{g'}C^g} \subset I \cap {^{g'}C^g}$, just apply $\epsilon = \sum_{s \in G}e_s$ to $e_{g'} \cdot  x \cdot  e_{g}$ for $x \in C$ and check $\sum_{s \in G}e_{g'}(x_1)e_s(x_2)e_g(x_3) = 0$ by orthogonality. 
			\item Observe $\Delta({^{g'}x^g}) - g' \otimes {^{g'}x^g}  \in \sum_{s \in G; s\neq g'}{^{g'}C^s} \otimes {^{s}C^g}$ and $\epsilon({^{g'}C^s}) = 0$. Thus $\Delta(x) - g' \otimes x \in I \otimes C$. The other is similar. 
		\end{enumerate}	
	\end{proof}
	
	We show a simply colored coalgebra has the following structure:
	\begin{itemize}
		\item $C$ is a $G$-bigraded module: $C = \bigoplus_{g, h \in G} {^gC^h}$ and $I$ is a $G$-bigraded submodule of $C$ that $I = \oplus_{g, h \in G}({^{g}C^h} \cap I)$. Both are compatible with both comultiplication and the retraction:${^gC^g} = Rg \oplus {^gI^g}$, if $g \neq h$ then ${^gC^h} = {^gI^h}$ and $\Delta({^gC^h}) \subset \sum_{t \in G}{^gC^t}\otimes {^tC^h}$.
		\item There is a well-defined coassociative reduced comultiplication $\bar{\Delta}$ induced by $\delta$ s.t. For $x \in {^gC^h}$, $\bar{\Delta}(x) = \Delta(x) - x \otimes h - g \otimes x$. Furthermore, for any $i \in I$, there exists a non-negative integer $n$ (that depends on x) s.t. $\bar{\Delta}^n(x) = 0$. 
	\end{itemize}
	The algebraic structure also determines a simply colored coalgebra. We will take a categorical viewpoint on the algebraic structure of simply colored coalgebras in the next section.

	\section{Categorical perspective}	
In this section, we explore the categorical perspective of simply colored coalgebras. The key result we establish is that the category of simply colored coalgebras over a field (equivalently, pointed coalgebras with a splitting) admits all small limits and colimits. The main challenge lies in demonstrating that the category has both products and coequalizers. To overcome this challenge, we introduce the notion of reduced colored coalgebras, which bears resemblance to the approach used in studying conilpotent coaugmented coalgebras by disregarding the coaugmentation.
		
	\subsection{Three categories related to simply colored coalgebras}
	Note for this part we do not assume the ground ring is a field. 
	\begin{definition}
		The category of simply colored coalgebras is defined as follows:
		\begin{itemize}
			\item The objects are precisely simply colored coalgebras $(C, G, \delta)$.  
			\item a morphism between $(C, G, \delta)$ and $(D, H, \mu)$ is precisely a coalgebra homomorphism $f:C \to D$ such that $\mu \circ f = f|_{C[G]} \circ \delta$. 
		\end{itemize}
	\end{definition}
\begin{remark}
We have shown that the image of a set-like element under a morphism between simply colored coalgebras is also a set-like element. Therefore, the map $f|_{C[G]}$ has the codomain $C[H]$. Additionally, according to the fundamental theorems of coalgebra homomorphism \cite{Sweedler}, a morphism between $(C, G, \delta)$ and $(D, H, \mu)$ in the category of simply colored coalgebras can be equivalently described as a coalgebra homomorphism $f$ between $C$ and $D$ such that $f(\ker \delta) \subseteq \ker \mu$.
\end{remark}

Recall that we have shown how the reduced comultiplication makes a simply colored coalgebra $C = C[G] \oplus I$ to a coalgebra without counit $(I, \overline{\Delta})$. This coalgebra without counit possesses the property that every element in $I$ is eliminated by a finite number of iterations of the reduced comultiplication. Specifically, it is reduced in the following sense:

\begin{definition}
		A coalgebra without counit $\overline{C}$ is called \textit{reduced} simply colored coalgebra if
		\begin{itemize}
			\item It is conilpotent: for every $x \in \overline{C}$, there exists a finite natural number $n$ s.t. $\overline{\Delta}^n(x) = 0$. \footnote{Since counit axiom does not hold, the comultiplication is no longer guaranteed to be injective.}
			\item There exists a set $G$ and $\overline{C}$ is a $C[G]$-bicomodule. In particular, $\overline{C}$ is a naturally $G$-bigraded comodule $\overline{C} = \bigoplus_{g, h \in G} {^g\overline{C}{}^h}$.
			\item Furthermore, $\overline{C}$ is $G$-bigraded coalgebra over $R$: $\overline{C} = \bigoplus_{g, h \in G} {^g\overline{C}{}^h}$ and $\Delta({^g\overline{C}{}^h}) \subset \oplus_{s \in S} {^g\overline{C}{}^s} \otimes {^s\overline{C}{}^h}$.
		\end{itemize}
		The reduced colored coalgebra is then characterized as a pair $(\overline{C}, G)$.
\end{definition}

\begin{definition}
    The category of reduced (simply) colored coalgebras is defined as follows:
	\begin{itemize}
	       \item The objects are precisely reduced colored coalgebras $(\overline{C}, G)$.
			\item The morphism are precisely pairs $(\overline{f}, i)$  where $\overline{f}: \overline{C} \to \overline{D}$ is a coalgebra homomorphism that only preserves comultiplication and $i:G \to S$ is a morphism of sets.
		\end{itemize}
	\end{definition}

There is a third category $(V, G)$ formed by forgetting the coalgebra structure of the reduced colored coalgebras. 
    \begin{definition}
	The category of colored modules (over a ring $R$) has the following:
		\begin{itemize}
			\item The objects are precisely pairs $(V, G)$ where $V$ is a $G$ bigraded $R$-module with $V = \bigoplus_{g, h \in G} {^gV^h}$ 
			\item The morphisms are precisely pairs $(f, i):(V, G) \to (W, S)$ where $i: G \to S$ is a morphism of sets and $f({^gV^h}) \subset {^{i(g)}W^{i(h)}}$ \footnote{In other word, $(f, i) = \bigoplus_{g, h \in G}f|_{^gV^h}$ and $\im(f|_{^gV^h}) \subset {^{i(g)}W^{i(h)}}$. On the other hands, if we have a sequence of maps $\phi_{g, h}$ on ${^gV{}^h}$ s.t. $\im(\phi_{g, h}) \subset {^{i(g)}W^{i(h)}}$ given a function of sets $i: G \to S$. Then we can construct a morphism of colored modules via gluing $(\phi, i) = \bigoplus_{g, h \in G}\phi_{g, h}$.}.
		\end{itemize}
	\end{definition}
 
What are the relationships between these categories?  In the upcoming subsection, we will see in the next part that there exists a cofree functor from the category of colored vector spaces (colored modules over a field) to the category of pointed coalgebras with a splitting (simply colored coalgebras over a field). Unfortunately, this adjunction may not hold for over arbitrary ring $R$. For the rest of the subsection, we will establish an equivalence between the category of simply colored coalgebras and the category of reduced colored coalgebras. As discussed earlier, we already know how to obtain a reduced colored coalgebra from a simply colored coalgebra. Next, we will show the reverse process by extending the reduced comultiplication $\overline{\Delta}$. The procedure is as follows:

	\begin{itemize}
		\item Join the set-like coalgebra $C[G]$ over the set $S$ as a direct summand to $\overline{C}$, so $C = C[S] \oplus \overline{C}$.
		\item Extend the counit map $\epsilon$ from $C[G]$ by setting $\epsilon(\overline{C}) = 0$
		\item The comultiplication $\Delta$ on $C[G] \oplus \overline{C}$ is defined as follows: its restriction on $C[S]$ is the same as comultiplication on set-like coalgebra. Its restriction on $\overline{C} = \bigoplus_{g, h \in G}{^g\overline{C}{}^h}$ is constructed as $\Delta(x) = \rho_l(x) + \overline{\Delta}(x) + \rho_r(x)$ for $x \in \overline{C}$. It is helpful to read component-wise that $\Delta(x) = g \otimes x + \overline{\Delta}(x) + x \otimes h$ for $x \in {^g\overline{C}^h}$.
	\end{itemize}
	We need to show the resulting construction is indeed a coalgebra, and furthermore, a simply colored coalgebra.
	\begin{proposition} \label{6}
		The construction $C[G] \oplus \overline{C}$ described above produces a simply colored coalgebra $C = C[G]\oplus I$ where $I = \overline{C}$ is an coideal of $C$.
	\end{proposition}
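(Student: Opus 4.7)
The plan is to verify the coalgebra axioms for the described $\Delta$ and $\epsilon$ on $C = C[G] \oplus \overline{C}$ directly, identify the canonical projection $\delta:C\to C[G]$ (with kernel $\overline{C}$) as the required retraction, and then transport the conilpotency of $(\overline{C},\overline{\Delta})$ to the simply colored condition on $(C, G, \delta)$.

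First I would dispose of the counit axiom. On the $C[G]$ summand it is built in from the set-like coalgebra structure. For $x \in {^g\overline{C}{}^h}$, expanding
\[
(\epsilon \otimes id)\Delta(x) \;=\; \epsilon(g)\,x \;+\; (\epsilon\otimes id)\overline{\Delta}(x) \;+\; \epsilon(x)\,h
\]
collapses to $x$, because $\overline{\Delta}(x) \in \overline{C}\otimes\overline{C}$ and $\epsilon$ kills $\overline{C}$ by construction; the right counit is symmetric. For the same bigrading/vanishing reason, $\Delta(\overline{C}) \subset \overline{C}\otimes C + C\otimes \overline{C}$ is immediate from the formula $\Delta(x) = g\otimes x + \overline{\Delta}(x) + x\otimes h$, so $\overline{C}$ is a coideal and $\delta$ is a coalgebra retraction.

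The central (and most tedious) step is coassociativity on $x \in {^g\overline{C}{}^h}$. Using the bigrading of a reduced colored coalgebra, write $\overline{\Delta}(x) = \sum x_{(1)} \otimes x_{(2)}$ with each summand in ${^g\overline{C}{}^s}\otimes{^s\overline{C}{}^h}$ for some $s \in G$. Expanding both $(\Delta\otimes id)\Delta(x)$ and $(id\otimes\Delta)\Delta(x)$ produces the same ``boundary'' terms $g\otimes g\otimes x$, $g\otimes x\otimes h$, $x\otimes h\otimes h$, $g\otimes\overline{\Delta}(x)$, and $\overline{\Delta}(x)\otimes h$, together with the middle term $\sum x_{(1)}\otimes s\otimes x_{(2)}$ (which appears on both sides precisely because the bigrading tracks the label $s$). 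What remains is $(\overline{\Delta}\otimes id)\overline{\Delta}(x)$ on one side and $(id\otimes\overline{\Delta})\overline{\Delta}(x)$ on the other, and these agree by the coassociativity of $\overline{\Delta}$ built into the definition of a reduced colored coalgebra. I expect the main obstacle here to be purely bookkeeping — being careful about which terms sit in which summand of $C^{\otimes 3}$ — rather than anything conceptually deep.

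Finally, to check the conilpotency clause of the simply colored axiom, I would compute the reduced comultiplication induced by $\delta$ on $x \in {^g\overline{C}{}^h}$: since $\delta$ annihilates $\overline{C}$ and fixes $C[G]$,
\[
(\delta\otimes id)\Delta(x) = g\otimes x, \qquad (id\otimes \delta)\Delta(x) = x\otimes h,
\]
so $\Delta(x) - (\delta\otimes id)\Delta(x) - (id\otimes\delta)\Delta(x) = \overline{\Delta}(x)$. Thus the reduced comultiplication induced by the retraction $\delta$ restricted to $\ker\delta = \overline{C}$ coincides with the given $\overline{\Delta}$, and the conilpotency hypothesis on $(\overline{C}, \overline{\Delta})$ becomes exactly the conilpotency condition required of $(C, G, \delta)$, completing the verification that the construction yields a simply colored coalgebra.
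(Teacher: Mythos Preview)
Your proposal is correct and follows essentially the same approach as the paper: a direct verification of the coalgebra axioms, with the heart of the argument being the component-wise expansion of $(\Delta\otimes id)\Delta(x)$ and $(id\otimes\Delta)\Delta(x)$ for $x\in{^g\overline{C}{}^h}$ and matching terms via the coassociativity of $\overline{\Delta}$. You are in fact slightly more thorough than the paper's own proof, which leaves the counit axiom and the coideal check as ``straightforward'' and does not explicitly verify that the retraction-induced reduced comultiplication coincides with the given $\overline{\Delta}$ (and hence that conilpotency transfers); your final paragraph fills that gap cleanly.
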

	\begin{proof}
		First, we need to verify both counit and coassociativity axioms are satisfied. The counit axiom is straightforward once the constructed comultiplication is well-defined, whereas the coassociativity requires computations. It is enough to check component-wise. For simplicity, we use the notation ${^gx^h} \in {^g\overline{C} ^h}$ and $\overline{\Delta}(^gx^h) = \sum_{s \in G}{^gx^s} \otimes {^sx^h}$. 
		$$(\Delta \otimes id)\Delta({^gx^h}) = (\Delta \otimes id)(g \otimes {^gx^h} + \sum_{s \in G}{^gx^s} \otimes {^sx^h} + {^gx^h} \otimes h) $$
		$$= g \otimes g \otimes {^gx^h} + \sum_{s \in G}(g \otimes {^gx^s} + \sum_{t \in G}{^gx^t} \otimes {^tx^s} + {^gx^s} \otimes s)\otimes {^sx^h} + $$
		$$ (g \otimes {^gx^h} + \sum_{t \in G}{^gx^t} \otimes {^tx^s} + {^gx^h} \otimes h) \otimes h $$
		and
		
		$$(id \otimes \Delta)\Delta({^gx^h}) = (id \otimes \Delta)(g \otimes {^gx^h} + \sum_{s \in G}{^gx^s} \otimes {^sx^h} + {^gx^h} \otimes h) $$
		$$= g \otimes (g \otimes {^gx^h}) + \sum_{s \in G}{^gx^s} \otimes {^sx^h} + {^gx^h} \otimes h) + $$
		$$  \sum_{s \in G}{^gx^s} \otimes  (s \otimes {^sx^h} + \sum_{{t'} \in G}{^sx^{t'}} \otimes {^{t'}x^h} + {^sx^h} \otimes h)  + {^gx^h} \otimes h\otimes h$$
		Observe that due to coaasociativity of $\overline{\Delta}$\footnote{The Sweddler's notation is omitted, but it is easy to see this equation corresponds to $(\overline{\Delta} \otimes id) \overline{\Delta}({^gx^h}) = (id \otimes \overline{\Delta})\overline{\Delta}({^gx^h})$}, 
		$$\sum_{s \in G}\sum_{t \in G}{^gx^t} \otimes {^tx^s} \otimes {^sx^h} =  \sum_{s \in G}\sum_{{t'} \in G} {^gx^s} \otimes {^sx^{t'}}
		\otimes {^{t'}x^h}$$
		Then it follows that $(\Delta \otimes id) \circ \Delta = (id \otimes \Delta) \circ \Delta$.
		Finally, $I = \overline{C}$ is a coideal can be checked easily. 
	\end{proof}
		
To extend the construction of Proposition \ref{6} to a functor, we now define how it acts on each morphism.

	\begin{proposition} \label{7}
		A coalgebra homomorphism between simply colored coalgebras induces an homomorphism of coalgebras without counit. On the other hand, A coalgebra homomorphism without counit between reduced colored coalgebras has an extenstion to a coalgebra homomorphism to the simply colored coalgebras.
	\end{proposition}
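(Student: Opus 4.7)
The proposal is to construct each direction explicitly and then verify compatibility with the comultiplications, using the decomposition $\bar{\Delta}=\Delta-\omega_l-\omega_r$ together with the bigrading results of Section 2.

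For the first direction, let $f:(C,G,\delta)\to(D,H,\mu)$ be a morphism of simply colored coalgebras. By the remark preceding the proposition, $f$ maps set-like elements to set-like elements, inducing a map of sets $i:G\to H$, and by the characterization in the same remark, $f(\ker\delta)\subset\ker\mu$. I would set $\bar{C}=\ker\delta$, $\bar{D}=\ker\mu$, and define $\bar{f}:=f|_{\bar{C}}:\bar{C}\to\bar{D}$. The plan is then to check two things: (a) $\bar{f}$ intertwines the reduced comultiplications, and (b) $(\bar{f},i)$ respects the $G$-bigrading. For (a), the relation $\mu\circ f=f|_{C[G]}\circ\delta$ gives $(f\otimes f)\circ\omega_l^C=\omega_l^D\circ f$ and similarly for $\omega_r$, so
\begin{equation*}
(\bar{f}\otimes\bar{f})\circ\bar{\Delta}^C=(f\otimes f)\circ(\Delta^C-\omega_l^C-\omega_r^C)=(\Delta^D-\omega_l^D-\omega_r^D)\circ f=\bar{\Delta}^D\circ\bar{f}.
\end{equation*}
For (b), the $C[G]$-bicomodule structure on $\bar{C}$ arises from $\omega_l,\omega_r$, and the same intertwining identities (together with the fact that $f$ pushes the $g$-th component $e_g$ of the orthogonal family to $e_{i(g)}$) give $\bar{f}({}^{g}\bar{C}{}^{h})\subset{}^{i(g)}\bar{D}{}^{i(h)}$, which is exactly the condition for a morphism of reduced colored coalgebras.

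For the reverse direction, let $(\bar{f},i):(\bar{C},G)\to(\bar{D},S)$ be a morphism of reduced colored coalgebras and let $C=C[G]\oplus\bar{C}$ and $D=C[S]\oplus\bar{D}$ be the associated simply colored coalgebras produced by Proposition \ref{6}. I would define the extension $f:C\to D$ to be the direct sum $f:=C[i]\oplus\bar{f}$, where $C[i]:C[G]\to C[S]$ is the $R$-linear extension of $i$. Clearly $\mu\circ f=f|_{C[G]}\circ\delta$ by construction. It then remains to verify that $f$ is a counital coalgebra homomorphism. The counit axiom is immediate since $\epsilon$ vanishes on both $\bar{C}$ and $\bar{D}$ and equals $1$ on set-like elements, and $f$ sends set-likes to set-likes. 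For the comultiplication, the identity $\Delta^D\circ f=(f\otimes f)\circ\Delta^C$ decomposes component-wise: on a set-like $g\in G$ it holds trivially, while for $x\in{}^{g}\bar{C}{}^{h}$ the recipe of Proposition \ref{6} gives
\begin{equation*}
\Delta^D(f(x))=i(g)\otimes\bar{f}(x)+\bar{\Delta}^D(\bar{f}(x))+\bar{f}(x)\otimes i(h),
\end{equation*}
and the first and third terms match $(f\otimes f)$ applied to $g\otimes x$ and $x\otimes h$ respectively, while the middle term matches $(\bar{f}\otimes\bar{f})(\bar{\Delta}^C(x))$ by hypothesis on $\bar{f}$.

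The only step that requires some care is the compatibility with the bigrading in the first direction: one must check that the orthogonal idempotents $\{e_g\}_{g\in G}$ in $C[G]^*$ are pulled back correctly to $\{e_{i(g)}\}$ in $C[H]^*$, which amounts to unwinding that $f$ preserves set-likes and then invoking Lemma \ref{5} and Proposition \ref{4}. Once this is in hand, both directions are essentially bookkeeping using the splitting $\Delta=\omega_l+\bar{\Delta}+\omega_r$ established in the previous subsection, and no further conilpotency hypothesis is required at this stage.
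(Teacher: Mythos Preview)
Your proposal is correct and follows essentially the same approach as the paper: restrict $f$ to $\bar{C}$ for the first direction, and extend $(\bar{f},i)$ as $C[i]\oplus\bar{f}$ for the second, verifying the comultiplication identity component-wise via the formula $\Delta(x)=g\otimes x+\bar{\Delta}(x)+x\otimes h$ on ${}^{g}\bar{C}{}^{h}$. In fact you supply more detail than the paper does---its proof of the first direction is a one-line ``clear from the structure'', whereas you spell out the intertwining $(f\otimes f)\circ\omega_l^C=\omega_l^D\circ f$ and the bigrading compatibility explicitly, which is a welcome addition.
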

	\begin{proof}
		The first statement is clear from the structure of simply colored coalgebras because coalgebra homomorphism maps set-like elements to set-like elements , $i(g) = f(g)$ for $f: C \to D$ and $\overline{f} = f|_{\overline{C}}: \overline{C} \to \overline{D}$.\\ For the second statement, it is due to the way we construct the reduced comultiplication. If $(\overline{f}, i): (\overline{C}, G) \to (\overline{D}, S)$ between two reduced colored coalgebras, then there is an extension to the morphism between simply colored coalgebras as $f(g) = i(g)$ for $g \in G$ and $f(x) = \overline{f}(x)$ for $x \in \overline{C}$. We check it is well-defined: 
		\begin{itemize}
			\item $\Delta(f(x)) = i(g) \otimes \overline{f}(x) + \overline{f}(x') \otimes \overline{f}(x'') + \overline{f}(x) \otimes i(h)$ for $x \in {^g\overline{C}{}^h}$ where $\overline{f}(\overline{\Delta}(x)) = (\overline{\Delta} \otimes \overline{\Delta})\overline{f}(x) = \overline{f}(x') \otimes \overline{f}(x'')$. In this case, the $x \in {^g\overline{C}{}^h}$ is sent to $\overline{f}(x) \in {^{i(g)}\overline{C}{}^{i(h)}}$. Moreover, $\Delta(f(g)) = \Delta(i(g)) = i(g) \otimes i(g)$.
			\item $\epsilon(f(x)) = 0$ if $x \in \overline{C}$ and $\epsilon(f(g)) = \epsilon(i(g)) = 1$
		\end{itemize}
	\end{proof}  
	
	\begin{corollary}
		There is a functor $F$ from the category of simply colored coalgebras to the category of reduced colored coalgebras and a functor $G$ that goes in the other direction such that $F$ (and $G$) gives an isomorphism of categories.
	\end{corollary}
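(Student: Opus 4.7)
The plan is to use Propositions \ref{6} and \ref{7} as the substantive content and then verify the remaining bookkeeping, which is largely formal. On objects, I would define the functor $F$ to send a simply colored coalgebra $(C, G, \delta)$ to the reduced colored coalgebra $(\ker\delta, G)$, where $\ker\delta$ carries the reduced comultiplication $\overline{\Delta}$ from Theorem \ref{1} together with the $G$-bigrading inherited from the $C[G]$-bicomodule structure of Proposition \ref{4}. On a morphism $f:(C,G,\delta) \to (D,H,\mu)$, set $F(f) = (f|_{\ker\delta}, f|_G)$; this is a well-defined morphism of reduced colored coalgebras by the first half of Proposition \ref{7}, using that $f|_G$ takes values in $H$ because coalgebra maps preserve set-like elements and the compatibility $\mu \circ f = f|_{C[G]} \circ \delta$ forces $f(C[G]) \subseteq C[H]$.

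For the reverse direction, the functor (denoted $G$ in the statement, despite the clash with the notation for the coloring set) acts on objects via the extension of Proposition \ref{6}: $(\overline{C}, G) \mapsto (C[G] \oplus \overline{C}, G, \pi_{C[G]})$ with comultiplication $\Delta(x) = g \otimes x + \overline{\Delta}(x) + x \otimes h$ for $x \in {}^g\overline{C}{}^h$, and on morphisms via the extension in the second half of Proposition \ref{7}. Functoriality of both $F$ and $G$ is immediate: identities are sent to identities because restriction and direct-sum extension preserve identities, and compositions are preserved because both operations commute with composition component-wise.

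The last task is to verify that the two composites equal the respective identity functors strictly, not merely up to natural isomorphism. For the round trip starting from $(\overline{C}, G)$, applying $G$ yields a simply colored coalgebra whose kernel is visibly $\overline{C}$, and the induced reduced comultiplication $\Delta - \omega_r - \omega_l$ recovers $\overline{\Delta}$ since the summands $g \otimes x$ and $x \otimes h$ in the extended $\Delta$ are exactly $\omega_l(x)$ and $\omega_r(x)$. For the round trip starting from $(C, G, \delta)$, the splitting $C = C[G] \oplus \ker\delta$ identifies the underlying module with the result of the extension, and the reconstructed comultiplication agrees with the original because for $x \in {}^g(\ker\delta)^h$ the identity $\Delta(x) = g \otimes x + \overline{\Delta}(x) + x \otimes h$ is literally the definition of $\overline{\Delta} = \Delta - \omega_r - \omega_l$ paired with $\omega_l(x) = g \otimes x$ and $\omega_r(x) = x \otimes h$. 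On morphisms, both composites manifestly return the input since a morphism in either category is determined by its components on $C[G]$ and on $\ker\delta$ separately. I do not anticipate a genuine obstacle here; the only subtlety worth flagging is ensuring that the bigraded $C[G]$-bicomodule structure on $\ker\delta$ used by $F$ matches the one produced by the inverse construction, which follows at once from the definitions of $\omega_l, \omega_r$ as $(\delta \otimes id) \circ \Delta$ and $(id \otimes \delta) \circ \Delta$.
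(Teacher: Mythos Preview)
Your proposal is correct and follows essentially the same approach as the paper: define $F$ by $(C,G,\delta)\mapsto(\ker\delta,G)$ and $F(f)=(f|_{\ker\delta},f|_{C[G]})$, define the inverse functor via Propositions \ref{6} and \ref{7}, and then verify functoriality and that both composites are the identity. The paper dispatches all of this in one sentence as ``straightforward to check,'' whereas you have (correctly) spelled out the details, including the point that the reconstructed $\Delta$ matches the original because $\overline{\Delta}=\Delta-\omega_l-\omega_r$.
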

	\begin{proof}
		Define $F((C, I)) = (I, G)$ ($C = C[G] \oplus I$) and $F(f) = (f|_{I}, f|_{C[G]})$. Also, define $G$ based on Proposition \ref{6} and Proposition \ref{7}. It is straightforward to check both $F$ and $G$ are functors and $F \circ G = \Id$ and $G \circ F = \Id$.
	\end{proof}
	\begin{remark}
		From now on, we will write a simply colored coalgebra as $C = C[G] \oplus \overline{C}$ where $\overline{C}$ is the associated reduced colored coalgebra. 
	\end{remark}

	\subsection{Interlude: cofree construction}
From this point forward, we assume that the ground ring is a field. As established by previous theorems, a simply colored coalgebra is equivalent to a pointed coalgebra with a splitting. In his work \cite{pointed}, Radford demonstrates that the forgetful functor from the category of pointed coalgebras with a splitting to the category of colored vector spaces (colored bicomodules over $k$) admits a left adjoint. This left adjoint is known as the cofree pointed coalgebra (with a splitting) over a pair $(V, G)$, and it is constructed using a cotensor coalgebra. Let us recall the notion of the cotensor product.

Given a right $C'$-comodule $M$ with a right coaction $\rho_R$ and a left $C'$-comodule $N$ with a left coaction $\rho_L$, we define the \textit{cotensor product} $M \square_{C'} N$ as the equalizer of $id_M \otimes \rho_L$ and $\rho_R \otimes id_N$:
	\[
	\begin{tikzcd}
		M \square_{C'} N \arrow[r, dashed] & M \otimes N \arrow[r] \arrow[r] \arrow[r] \arrow[r] \arrow[r] \arrow[r, "\rho_r \otimes id_N"] \arrow[r, "id_M \otimes \rho_l"', shift right] & M \otimes C' \otimes N
	\end{tikzcd}
	\]
	Since $M \otimes N$ is a vector space, $M \square_{C'} N$ is the kernel of $(\rho_R \otimes id_N - id \otimes \rho_L)$ and subvector space of $M \otimes N$. When the underlying space is a vector space, there is an associativity $(M \square_{C'} N) \square_{C'} Q \cong M \square_{C'} (N \square_{C'} Q)$. Next, we can construct the cotensor coalgebra (\cite{nichols_1978}) over a pair $(S, M)$ of a coalgebra $S$ and $S$-bicomodule $M$ as follows (It is clear in this case the underlying coring of a comodule, so we omit the subscript $S$ in $\square_S$ as $\square$): 
	$$CoT_{S}(M) = \bigoplus_{n \geq 0}M^{\Box n} = S \oplus M \oplus (M \square M) \oplus (M \square M \square M) \oplus...$$
	Since the ground ring is a field, $M^{\Box n}$ is a vector subspace in $M^{\otimes n}$. Every element in $M^{\Box n}$ could be written as the normal bar notation $[x_1|x_2|...x_n]$ without confusion. This cotensor construction can be endowed with a natural coalgebra structure by deconcatenation: 
	$$\Delta([x_1|...|x_n]) =  [\rho_l(x_1)|...|x_i] + \sum_{i = 1}^{n-1} [x_1|...|x_i] \square [x_{i+1}|...|x_n] + [x_1|x_2|...|\rho_r(x_n)]$$ and counit
	$$\epsilon|_{S} = \epsilon_S \text{~and~} \epsilon|_{M^{\Box n}} = 0 \text{~for~all~}n$$
	Note the cotensor coalgebra has a clear grading by number of tensor factors of $M$. 
	Radford shows that $C(V, G) = CoT_{C[G]}(V)$ exhibiting the following universal property:
	
	\begin{theorem}[Radford, \cite{pointed}] 
		Let $C = k[G(C)] \oplus \overline{C}$ be a pointed coalgebra with a splitting, and $C(S, V)$ constructed from pair $(S, V)$. Let $f$ be a linear map from $\overline{C}$ to $V$. If $f$ is compatible with both bigrading in $\overline{C}$ and $V$, i.e. there is an set function $\phi: G(C) \to S$ s.t. $f({^g\overline{C}{}^h}) \subset ^{\phi(g)}V^{\phi(h)}$. Then there exists a unique coalgebra map $F: C \to C(G,V)$ such that the following diagram of $k$-linear maps commutes
		\[
		\begin{tikzcd}
			\overline{C} \arrow[r, "f"] \arrow[rd, "F|_{\overline{C}}"', dotted] & V                            \\
			& {C(G,V)} \arrow[u, "\pi_V"']
		\end{tikzcd}
		\]
		where $\pi_V$ is the projection from $C(S, V)$ to $V$ and $F({^gC^h}) \subset {^{\phi(g)}C(G,V)^{\phi(h)}}$ for $x, y \in G(C)$.
	\end{theorem}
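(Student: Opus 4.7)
The plan is to construct $F$ explicitly using iterated reduced comultiplication, taking advantage of the cotensor grading $C(G,V) = k[S] \oplus V \oplus V^{\Box 2} \oplus \cdots$, and then to verify the universal property. On the set-like summand $k[G(C)]$, define $F_0(g) = \phi(g)$; this is forced, since coalgebra maps send set-like elements to set-like elements, and the bigrading constraint $F({^g C^g}) \subset {^{\phi(g)}C(G,V)^{\phi(g)}}$ singles out $\phi(g)$ uniquely. On $\overline{C}$, set $F_n := f^{\otimes n} \circ \overline{\Delta}^{n-1}$ for each $n \geq 1$, and put $F|_{\overline{C}} = \sum_{n \geq 1} F_n$. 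The sum is pointwise finite because the conilpotency guaranteed by Theorem \ref{11} yields $\overline{\Delta}^N(x) = 0$ for some $N = N(x)$, so $F_n(x) = 0$ for $n > N$.

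The first substantive check is that $F_n$ actually lands in the cotensor power $V^{\Box n}$ rather than merely in $V^{\otimes n}$. This uses the bigrading compatibilities $\overline{\Delta}({^g \overline{C}^h}) \subset \bigoplus_{t \in G(C)} {^g\overline{C}^t} \otimes {^t\overline{C}^h}$ and $f({^g\overline{C}^h}) \subset {^{\phi(g)}V^{\phi(h)}}$: these together ensure that adjacent tensor factors in $F_n(x)$ share a common intermediate color in $S$, which is precisely the equalizer condition $(\rho_r \otimes id) = (id \otimes \rho_l)$ defining $V^{\Box n}$.

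Next, verify that $F = F_0 \oplus \sum_n F_n$ is a coalgebra map. The counit condition is immediate from $\epsilon_{C(G,V)}$ vanishing on $V^{\Box n}$ for $n \geq 1$ and matching $\epsilon_C$ on $k[G(C)]$. For comultiplication, on $x \in {^g\overline{C}^h}$ decompose $\Delta_C(x) = g \otimes x + \overline{\Delta}(x) + x \otimes h$ and compare with the deconcatenation on $V^{\Box n}$. The boundary terms $g \otimes x$ and $x \otimes h$ map under $F \otimes F$ to $\phi(g) \otimes F_n(x)$ and $F_n(x) \otimes \phi(h)$, which match the $\rho_l$ and $\rho_r$ endpoint pieces of the deconcatenation because $f$ respects bigradings. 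The middle term $\overline{\Delta}(x)$, combined with coassociativity of $\overline{\Delta}$, produces exactly the interior splits $F_i(x') \square F_{n-i}(x'')$. The projection identity $\pi_V \circ F|_{\overline{C}} = f$ holds because only $F_1 = f$ contributes to the $V$-component.

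For uniqueness, let $F'$ be any coalgebra map satisfying the hypotheses. Then $F'(g)$ is set-like in $C(G,V)$, and the bigrading constraint forces $F'(g) = \phi(g) = F(g)$. On $\overline{C}$, induct on $n$ to show $\pi_{V^{\Box n}} \circ F'|_{\overline{C}} = F_n$: the base case $n = 1$ is the hypothesis $\pi_V \circ F'|_{\overline{C}} = f$, and the inductive step uses that $F'$ commutes with comultiplication, together with the deconcatenation formula on $C(G,V)$, to reduce $\pi_{V^{\Box n}} \circ F'$ to a composite involving $\pi_V \circ F'|_{\overline{C}}$ and $\overline{\Delta}^{n-1}$. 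The main technical obstacle is the bookkeeping in the coalgebra-map verification: the three summands of $\Delta_C|_{\overline{C}} = \rho_l + \overline{\Delta} + \rho_r$ must be paired correctly with the boundary and interior pieces of the deconcatenation coproduct on $V^{\Box n}$, and it is precisely the $C[S]$-bicomodule structure on $V$ together with $f$'s bigrading compatibility that makes the endpoint terms align.
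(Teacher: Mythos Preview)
The paper does not actually prove this theorem; it is stated with attribution to Radford \cite{pointed} and then used as input for the cofree adjunction and the subsequent limit/colimit arguments. So there is no ``paper's own proof'' to compare against.

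Your construction is the standard one for cofree conilpotent objects and is correct. The explicit formula $F|_{\overline{C}} = \sum_{n\geq 1} f^{\otimes n}\circ\overline{\Delta}^{n-1}$, finiteness from conilpotency (via Theorem~\ref{11}), landing in $V^{\Box n}$ from the color-matching condition, and the comultiplication check pairing $\rho_l + \overline{\Delta} + \rho_r$ against the deconcatenation boundary/interior terms are all sound. Two small points worth tightening: first, in the $F_n \in V^{\Box n}$ verification you should say explicitly that the equalizer condition must hold at \emph{every} internal slot, not just between the first two factors, so that an induction on $n$ (or a direct appeal to the iterated bigrading inclusion $\overline{\Delta}^{n-1}({}^g\overline{C}{}^h) \subset \bigoplus_{t_1,\dots,t_{n-1}} {}^g\overline{C}{}^{t_1}\otimes\cdots\otimes{}^{t_{n-1}}\overline{C}{}^h$) is needed. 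Second, in the uniqueness argument the inductive step deserves one more line: from $(F'\otimes F')\Delta = \Delta F'$ project to $V^{\Box i}\otimes V^{\Box(n-i)}$ and use that on the target side only the deconcatenation split at position $i$ contributes, while on the source side only $(\pi_{V^{\Box i}}F'\otimes \pi_{V^{\Box(n-i)}}F')\circ\overline{\Delta}$ survives; taking $i=1$ and invoking the inductive hypothesis gives $\pi_{V^{\Box n}}F' = (f\otimes F_{n-1})\circ\overline{\Delta} = F_n$. With those clarifications the argument is complete.
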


		$CoT_{C[G]}(V)$ is pointed  with the splitting given by $\delta|_{C[G]} = id$ and $\delta|_{^{\Box n}} = 0$. Now we set $\overline{C(S,V)} = \bigoplus_{n>0}M^{\Box n}$ as the reduced colored coalgebra associated to $C(S,V)$. By the equivalence of reduced colored coalgebras and simply colored colagbras, the universal property becomes 
		\[
		\begin{tikzcd}
			\overline{C} \arrow[r, "f"] \arrow[rd, "\bar{F}"', dotted] & V                            \\
			& {\overline{C(G,V)}} \arrow[u, "\pi_V"']
		\end{tikzcd}
		\]
		Then, it becomes the cofree construction in the category of reduced colored coalgebras, and we will called it \textit{reduced cofree colored coalgebra} over a field.

	\begin{example}
		This theorem generalizes the known fact that \textit{the tensor coalgebra is cofree in the category of (coaugmented) conilpotent coalgebras} by letting $G$ to be a Singleton.  
	\end{example}
	\subsection{Limits and colimits}
Finally, we conclude this section by demonstrating that the category of pointed coalgebras with a splitting (equivalently, the category of reduced colored coalgebras over a field) is both complete and cocomplete. Our approach will differ from that of \cite{ckd}.
	
	\begin{theorem}
		The category of pointed coalgebras with a splitting has all small coproducts and equalizers
	\end{theorem}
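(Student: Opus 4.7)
My plan is to construct the two kinds of (co)limits separately: coproducts via the equivalence with reduced colored coalgebras, and equalizers by transporting them from the ambient category of pointed coalgebras (without a specified splitting), which has already been shown to be complete in Section 1.

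For coproducts, given a family $\{(C_\alpha, G_\alpha, \delta_\alpha)\}_{\alpha \in A}$, I would pass through the isomorphism of categories to the reduced colored coalgebras $(\overline{C}_\alpha, G_\alpha)$ and form the pair $(\overline{C}, G)$ with $\overline{C} := \bigoplus_\alpha \overline{C}_\alpha$ and $G := \bigsqcup_\alpha G_\alpha$, with bigrading ${}^g\overline{C}{}^h := {}^g\overline{C}_\alpha{}^h$ whenever $g, h \in G_\alpha$ for a common $\alpha$ and zero otherwise, and with comultiplication the direct sum of the $\overline{\Delta}_\alpha$. Conilpotence is immediate since any element involves only finitely many summands, each already conilpotent. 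Transporting back through the inverse equivalence yields a simply colored coalgebra $C = k[G] \oplus \overline{C}$ together with canonical inclusions, and the universal property follows from the universal property of the direct sum of modules, verified summand by summand.

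For equalizers, given parallel morphisms $f, g: (C, \delta_C) \to (D, \delta_D)$, I would take the equalizer $E$ in the category of pointed coalgebras, which exists by completeness. The key step is to show that $\delta_C$ restricts to a splitting of $E$. Since $f$ and $g$ are morphisms of pointed coalgebras with splittings, $\delta_D \circ f = f|_{C_0} \circ \delta_C$ and similarly for $g$, so for $x \in E$ we get
$$f(\delta_C(x)) = \delta_D(f(x)) = \delta_D(g(x)) = g(\delta_C(x)).$$
Hence $\delta_C(E)$ is a subcoalgebra of $C$ contained in the set-theoretic equalizer of $f$ and $g$, and by maximality of $E$ among such subcoalgebras we conclude $\delta_C(E) \subseteq E$. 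Because every subcoalgebra of a pointed coalgebra is pointed with coradical given by intersection with the ambient coradical (Proposition 3.4.3 of \cite{DR2}), $E_0 = E \cap C_0$, and therefore $\delta_C(E) \subseteq C_0 \cap E = E_0$. Setting $\delta_E := \delta_C|_E$ produces the desired splitting. The universal property transfers immediately: any $h: (X, \delta_X) \to (C, \delta_C)$ in the splitting category equalizing $f$ and $g$ factors uniquely through $E$ as a coalgebra map by universality in pointed coalgebras, and the factorization automatically commutes with splittings because $\delta_E$ is the restriction of $\delta_C$.

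The subtle point requiring care is that the equalizer in coalgebras is generally the \emph{largest subcoalgebra} of $C$ contained in the set-theoretic equalizer, not the set-theoretic equalizer itself. The argument above sidesteps this by exhibiting $\delta_C(E)$ as a subcoalgebra inside that set-theoretic equalizer and invoking maximality, rather than attempting to track individual elements. Everything else is routine: the coproduct construction is essentially the direct sum, and verifying the equalizer's universal property requires only reinterpreting the pointed-coalgebra universal property in the presence of a compatible splitting.
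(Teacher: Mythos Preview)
Your proposal is correct. The arguments differ from the paper's mainly in packaging rather than substance.

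For coproducts, the paper simply observes that the direct sum of coalgebras is already the coproduct in the category of coalgebras, that a direct sum of pointed coalgebras is pointed, and that the direct sum of the coideals $\ker\delta_\alpha$ is again a coideal; this yields the splitting on $\bigoplus_\alpha C_\alpha$ immediately. Your detour through the isomorphism with reduced colored coalgebras reconstructs exactly the same object $k[\bigsqcup_\alpha G_\alpha]\oplus\bigoplus_\alpha\overline{C}_\alpha$, so nothing is gained beyond a sanity check of the equivalence; you could shorten this considerably.

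For equalizers, the paper only writes ``the argument for the equalizer follows the same reasoning as presented in Theorem~1.1 of \cite{limit}'', i.e.\ take the largest subcoalgebra contained in the set-theoretic equalizer. Your route---borrow the equalizer $E$ from the already-established completeness of pointed coalgebras and then show that $\delta_C$ restricts to a splitting of $E$---is genuinely more explicit and fills in precisely the point the paper leaves implicit, namely why the splitting passes to $E$. One small remark: when you invoke ``maximality of $E$ among such subcoalgebras'' you are using the description of the equalizer in \emph{coalgebras}; this is legitimate because subcoalgebras of pointed coalgebras are pointed, so the two equalizers coincide, but you could equally well avoid that identification by noting that the pointed subcoalgebra $\delta_C(E)\hookrightarrow C$ equalizes $f,g$ and hence factors through $E$ by the universal property in pointed coalgebras directly.
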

	\begin{proof}
Recall that the coproduct of coalgebras corresponds to the direct sum of coalgebras. Given that the sum of pointed coalgebras is also pointed, and the sum of coideals is also a coideal, the coproduct of pointed coalgebras with a splitting inherits the structure from the coproduct in the category of coalgebras. It is precisely the direct sum of pointed coalgebras with a splitting induced by the direct sum of coideals. The argument for the equalizer follows the same reasoning as presented in Theorem 1.1 of \cite{limit}.
	\end{proof}
	
Now we come to the more challenging aspect. Instead of dealing with pointed coalgebras with a splitting, we will shift our focus to reduced colored coalgebras as they offer a less complex framework.

	\begin{lemma} \label{9}
		The category of colored vector spaces has all small products. 
	\end{lemma}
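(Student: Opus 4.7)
The plan is to construct the product explicitly, imitating the product of bigraded vector spaces over a fixed indexing set but accommodating the fact that the color sets $G_\alpha$ can differ. Given a small family $\{(V_\alpha, G_\alpha)\}_{\alpha \in A}$, my first move is to take the product of the color sets, $G = \prod_{\alpha \in A} G_\alpha$, with the canonical set-projections $i_\alpha : G \to G_\alpha$ (writing $g_\alpha = i_\alpha(g)$). This is forced by the universal property: any cone morphism $(\phi,j) : (W,S) \to (V,G)$ that recovers a prescribed $(\phi_\alpha, j_\alpha)$ after composition with the $\alpha$-th projection must satisfy $i_\alpha \circ j = j_\alpha$ for every $\alpha$, so $j$ is uniquely determined by its components.

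Next, for each pair $(g,h) \in G \times G$ I would define the bigraded piece as the vector-space product
$${^gV^h} = \prod_{\alpha \in A} {^{g_\alpha}V_\alpha^{h_\alpha}},$$
and set $V = \bigoplus_{g,h \in G}{^gV^h}$, so that $(V,G)$ is a colored vector space by construction. The projections $p_\alpha = (f_\alpha, i_\alpha)$ are defined by letting $f_\alpha$ act on each bigraded piece ${^gV^h}$ as the $\alpha$-th coordinate map into ${^{g_\alpha}V_\alpha^{h_\alpha}}$, and then extending by linearity across the direct sum. This clearly produces a morphism of colored vector spaces.

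To verify the universal property, I would start from an arbitrary cone $\{(\phi_\alpha, j_\alpha) : (W,S) \to (V_\alpha, G_\alpha)\}$, put $j(s) = (j_\alpha(s))_{\alpha}$, and define $\phi$ on a homogeneous element $w \in {^sW^t}$ as the tuple $(\phi_\alpha(w))_{\alpha}$, which lies in the single bigraded piece ${^{j(s)}V^{j(t)}}$. Extending by linearity gives $\phi : W \to V$, and $p_\alpha \circ (\phi,j) = (\phi_\alpha, j_\alpha)$ is immediate componentwise. Uniqueness follows because $f_\alpha \circ \phi = \phi_\alpha$ together with the product structure on each bigraded piece pins down every coordinate.

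The point I expect to be the main obstacle, and the one I would be most careful about in the write-up, is reconciling the infinite product appearing \emph{inside} each bigraded piece with the direct sum that the definition of colored vector space requires \emph{across} bigraded pieces. The resolution is that each $w \in W$ is a finite sum of bihomogeneous components, so its image $\phi(w)$ is supported in only finitely many bigraded pieces of $V$ and genuinely lives in the direct sum, not in any larger completion; the possibly huge product sitting inside a single ${^gV^h}$ never causes trouble because we never have to sum across infinitely many pieces. Once this distinction is kept straight, the remaining checks are routine.
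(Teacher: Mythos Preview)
Your proposal is correct and follows essentially the same approach as the paper: both take $G=\prod_\alpha G_\alpha$ and build the product as the ``graded product'' $\bigoplus_{g,h\in G}\prod_\alpha {^{g_\alpha}V_\alpha^{h_\alpha}}$, with the universal property checked via the corresponding properties in $\mathbf{Set}$ and $\mathbf{Vect}$. Your explicit external direct sum formulation is in fact slightly cleaner than the paper's description as ``the subspace of $\prod_\alpha \overline{C_\alpha}$ generated by homogeneous sequences,'' and your final paragraph correctly isolates the one genuine subtlety (direct sum across bidegrees versus product within a bidegree), which the paper also flags in its subsequent remark.
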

	\begin{proof}
		Given a family $\Lambda$ of colored vector spaces $(\overline{C_\alpha}, G_\alpha)$ for $\alpha \in \lambda$, we define the colored product
		$\mathbb{G}(\prod_{\alpha \in \Lambda}\overline{C_\alpha})$ as the vector subspace of $\prod_{\alpha \in \Lambda}\overline{C_\alpha}$ generated by the sequence of homogeneous elements $({^{g'_{\alpha}}x_\alpha}{}^{g_{\alpha}})_\alpha$ where each ${^{g'_{\alpha}}x_\alpha}{}^{g_{\alpha}} \in {^{g'_{\alpha}}\overline{C_\alpha}}{}^{g_{\alpha}}$
		for some $g'_{\alpha}$ and $g_\alpha$ in $G_{\alpha}$. $\mathbb{G}(\prod_{\alpha \in \Lambda}\overline{C_{\alpha}})$ is $(\prod_{\alpha \in \Lambda}G_{\alpha})$ bigraded by construction. The natural projection from product of vector spaces $p_\alpha$ from $\prod_{\alpha \in \Lambda} \overline{C_\alpha} \to \overline{C_\beta}$ for a $\lambda \in \Lambda$ restricts to the subspace $\mathbb{G}(\prod_{\alpha \in \Lambda}\overline{C_\alpha})$. The restriction $\tilde{p}_\alpha$ of each projection $p_\alpha$ from the universal property on the subspace $\mathbb{G}(\prod_{\alpha \in \Lambda}\overline{C_\alpha})$ is also surjective. The universal property of product of sets induces a map $\pi_\lambda: \prod_{\alpha' \in \Lambda} G_{\alpha'} \to G_{\lambda}$. \newline 
		Now we would like to show $((\mathbb{G}(\prod_{\alpha \in \Lambda}\overline{C_{\alpha}}), \prod_{\alpha \in \Lambda}G_{\alpha}), \{(\tilde{p}_\alpha, \pi_\alpha)\}_{\alpha \in \lambda})$ satisfies the universal property of product.  \newline
		Let $(\overline{D'}, S)$ be a colored vector space and a family of map $(\psi_\alpha, \xi_\alpha): (\overline{D'}, S) \to (\overline{C_\alpha}, G_\alpha)$ for each $\alpha$ in a family $\Lambda$ of a colored vector spaces $(\overline{C_\alpha}, G_\alpha)$. We have $\psi_\alpha ({^g\overline{D'}{}^h}) \subset {^{\xi_{\alpha}(g)}\overline{C_\alpha}{}^{\xi_{\alpha}(h)}}$ and $\xi_{\alpha}(S) \subset G_\alpha$. 
		There are three observations:
		\begin{itemize}
			
			\item The construction of product of sets implies that $\xi_{\alpha}(S) \subset G_\alpha$ for $\alpha \in \lambda$ induces a morphism of sets  $\xi(S) \subset \prod_{\alpha \in \Lambda}G_\alpha$. The map $\xi$ sends $s \in S$ to a sequence of elements $(\xi_{\alpha}(s))_{{\alpha \in \Lambda}}$. 
			\item The construction of product of vector spaces implies that $\psi_\alpha(\overline{D'}) \subset \overline{C_\alpha}$ induces a morphism of vector spaces $\phi: \overline{D'} \to \prod_{\alpha \in \lambda}\overline{C}_{\alpha}$. The map sends $d \in \overline{D'}$ to a sequence of vectors $(\phi_{\alpha}(d))_{{\alpha \in \Lambda}}$
			\item It is an important observation that the ``homogeneous" map $\phi: \overline{D'} \to \prod_{\alpha \in \Lambda} \overline{C_\alpha}$ sends $d \in {^g\overline{D'}{}^h}$ to $(\phi_{\alpha}(d))_{{\alpha \in \Lambda}} \in\prod_{\alpha \in \Lambda} {^{\xi_{\alpha}(g)}  \overline{C_\alpha}{}^{\xi_{\alpha}(h)}}$ because $\psi_\alpha(d) \in {^{\xi_{\alpha}(g)}\overline{C_\alpha}{}^{\xi_{\alpha}(h)}}$. Moreover, ${^{\xi_{\alpha}(g)}\overline{C_\alpha}{}^{\xi_{\alpha}(h)}}$ is in the image of $\tilde{p}_\alpha$.
		\end{itemize}
		Therefore, the image of $\phi$ is in the colored vector spaces $\mathbb{G}(\prod_{\alpha \in \Lambda}\overline{C_{\alpha}})$ and preserves the degrees based on the degree map $\xi$. It follows the family of maps $\{\phi_\alpha, \xi_\alpha\}_{\alpha \in \lambda}$ induces a map $(\phi|_{\mathbb{G}(\prod_{\alpha \in \Lambda}\overline{C_{\alpha}})}, \xi)$ from $(\overline{D'}, S)$ to $(\mathbb{G}(\prod_{\alpha \in \Lambda}\overline{C_{\alpha}}), \prod_{\alpha \in \Lambda}G_\alpha)$. The uniqueness of $(\phi|_{\mathbb{G}(\prod_{\alpha \in \Lambda}\overline{C_{\alpha}})}, \xi)$ follows from the fact that $\xi$ and $\phi$ are unique up to unique isomorphism because of universal properties of products of sets and vector spaces respectively.  
	\end{proof}
	\begin{remark}
The infinite product of vector spaces presents several subtleties. One notable instance is the existence of a non-trivial topology: while many vectors in the product can be expressed as an infinite sum of a set of vectors, not all infinite sums of vectors may belong to this product. This issue is reflected in the fact that $\oplus_i \prod_j M_{i, j} \ncong \prod_j \oplus_iM_{i, j}$, and the usual grading cannot be applied. To address this, we are motivated by the concept of the \textit{graded product} of $\mathbb{N}$-graded modules. By leveraging the property of homogeneous maps, $\oplus_i \prod_j M_{i, j}$ becomes the appropriate choice.
	\end{remark}
	\begin{theorem}
		The category of reduced colored coalgebras over a field has all small coequalizers and small products 
	\end{theorem}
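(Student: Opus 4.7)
The plan is to handle products and coequalizers separately, leveraging the colored vector space product from Lemma \ref{9} together with the reduced cofree construction from the interlude.

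For small products, given a family $\{(\overline{C_\alpha}, G_\alpha)\}_{\alpha \in \Lambda}$ of reduced colored coalgebras, I would first invoke Lemma \ref{9} to form the colored vector space product $(\mathbb{G}(\prod_\alpha \overline{C_\alpha}), \prod_\alpha G_\alpha)$ together with its colored projections $(\tilde{p}_\alpha, \pi_\alpha)$. Applying the reduced cofree functor yields the reduced colored coalgebra $\overline{P} := \overline{C(\prod_\alpha G_\alpha, \mathbb{G}(\prod_\alpha \overline{C_\alpha}))}$, equipped with a canonical colored linear projection onto each $\overline{C_\alpha}$. I would then define the actual product as the sum (colimit) inside $\overline{P}$ of all images of morphisms $(\overline{D}, T) \to \overline{P}$ that arise from a compatible family $\{(\overline{D}, T) \to (\overline{C_\alpha}, G_\alpha)\}_\alpha$ via the universal property of the cofree construction. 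This mimics the standard construction of products in the category of coalgebras. The universal property will follow because every such compatible family factors uniquely through the sum by the cofree universal property applied component-wise, and the sum inherits a reduced colored coalgebra structure as a sum of subcoalgebras.

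For coequalizers, given parallel morphisms $(\overline{f}, i), (\overline{g}, j): (\overline{C}, G) \to (\overline{D}, S)$, I would construct the coequalizer by a two-stage quotient. First, at the set level, take $T = S/\sim$, where $\sim$ is the smallest equivalence relation making $i(g) \sim j(g)$ for every $g \in G$, with quotient map $q: S \to T$. The $S$-bigrading on $\overline{D}$ coarsens along $q$ to a $T$-bigrading, since we may set ${^t\overline{D}^u} = \bigoplus_{q(s)=t,\, q(s')=u} {^s\overline{D}{}^{s'}}$. Under this coarsening, $\overline{f}(x)$ and $\overline{g}(x)$ land in the same $T$-bigraded piece for every $x \in {^g\overline{C}^h}$, so the subspace $\overline{K}$ generated by the differences $\overline{f}(x) - \overline{g}(x)$ is $T$-bigraded. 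I would then verify that $\overline{K}$ is a coideal for the reduced comultiplication and that conilpotency descends, making $(\overline{D}/\overline{K}, T)$ a reduced colored coalgebra; the universal property is then straightforward.

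The main obstacle in both constructions is the interplay between the bigrading data (indexed by sets) and the coalgebra data (indexed by vector spaces). For the product, the subtle step is verifying that the chosen subcoalgebra of $\overline{P}$ is well-defined and carries a reduced colored coalgebra structure that recovers the compatible family on each factor; this hinges on the colored refinement of the cofree universal property established in the interlude. For the coequalizer, the delicate point is checking that $\overline{K}$ is stable under $\overline{\Delta}$ after bigrading coarsening --- this requires that $\overline{\Delta}(\overline{f}(x) - \overline{g}(x))$ can be expressed modulo $\overline{K} \otimes \overline{D} + \overline{D} \otimes \overline{K}$ using only the cotensor structure over $C[T]$ rather than $C[S]$. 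Once these are in place, conilpotency of the quotient is immediate since any representative of an equivalence class is already eliminated by a finite iteration of $\overline{\Delta}$.
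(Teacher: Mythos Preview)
Your proposal is correct and follows essentially the same strategy as the paper. For coequalizers you do exactly what the paper does: coequalize the color sets, coarsen the $S$-bigrading on $\overline{D}$ to a $T$-bigrading, observe that $\mathrm{im}(\overline{f}-\overline{g})$ is a $T$-homogeneous coideal, and pass to the quotient. For products you also match the paper: form the colored vector-space product of Lemma~\ref{9}, apply the reduced cofree construction, and then cut down to the appropriate largest subcoalgebra, as in Agore's argument. The only cosmetic difference is that you describe that subcoalgebra as the sum of images of maps coming from compatible families, whereas the paper (following \cite{limit}) describes it as the sum of all subcoalgebras $E\subset\overline{P}$ for which each composite $E\hookrightarrow\overline{P}\to\overline{C_\alpha}$ is a coalgebra map; these yield the same object.
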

 
	\begin{proof}
		We start with the coeqaulizer first. We would like to coequalizer the morphisms $(f, a), (g, b):(\overline{C}, G) \to (\overline{D}, S)$. It begins with the three observations:
		\begin{itemize}
			\item In the category of sets, there exists the coequalizer (up to unique isomorphism) of morphisms of sets $a, b: G \to S$, which is a quotient set $\frac{S}{\sim}$ and the equivalence relation $\sim$ is generated by $a(x) \sim b(x)$ for all $x \in G$. Let the natural map from $S$ to $\frac{S}{\sim}$ be $p$.
			\item $\overline{D}$ can be endowed with a new bigrading based on $\frac{S}{\sim}$. We set $^{[g]}\overline{D}{}^{[h]}$ equal $\bigoplus\limits_{s, t \in S; p(s) = p(g), p(t) = p(h)} {^{g}\overline{D}{}^{h}}$ for $[g] = p(g), g \in S$. The comultiplication is compatible with the new bigrading: if $\Delta(^g{\overline{D}}{}^h) \subset \sum_{t \in S} ^g{\overline{D}}{}^t \otimes ^t{\overline{D}}{}^h$, then $\Delta(^{[g]}{\overline{D}}{}^{[h]}) \subset \sum_{t \in S} {^{[g]}{\overline{D}}{}^{[t]}} \otimes {^{[t]}{\overline{D}}{}^{[h]}} = \sum_{[t] \in \frac{S}{\sim}} {^{[g]}{\overline{D}}{}^{[t]}} \otimes {^{[t]}{\overline{D}}{}^{[h]}}$ because $^s{\overline{D}}{}^t \subset {^{[s]}{\overline{D}}{}^{[t]}}$ for any $s, t \in S$ by construction.
			\item  $f(x) - g(x) \in {}^{[a(g)]}\overline{D}{}^{[a(h)]}$ for $x \in {}^g\overline{C}{}^h$ and $p(a(g)) = p(b(g)) = [a(g)]$ ($p(a(h)) = p(b(h)) = [a(h)]$). It implies that 
			$$\im(f - g) = \bigoplus_{[s], [t] \in \frac{S}{\sim}}({}^{[s]}\overline{D}{}^{[t]} \cap \im(f - g))$$ i.e. it is generated by homogeneous elements. $f$ and $g$ that restrict to ${}^g\overline{C}{}^h$ is coequalized in $\frac{{}^{[a(g)]}\overline{D}{}^{[a(h)]}}{{}^{[a(g)]}\overline{D}{}^{[a(h)]} \cap \im(f - g)}$. In addition, $\im(f - g)$ is a graded coideal $\bigoplus_{\text{cond}_2}{{}^{s'}\overline{C}{}^{t'}}$. The check is the same as in the case of normal coalgebra, but we do not need to check counit condition.

		\end{itemize}    
		We claim $(\bigoplus_{[s], [t] \in \frac{S}{\sim}}\frac{{}^{[s]}\overline{D}{}^{[t]}}{{}^{[s]}\overline{D}{}^{[t]} \cap \im(f - g)}, (\pi, p))$ is the coequalizer (up to unique isomorphism) of $(f, a)$ and $(g, b)$ where $\pi$ is the natural projection which is homogeneous according to $p$. Suppose there is another $((\overline{B}, Q), (h, c))$ that coequalizes the two morphisms. On the level of set, there exists a unique morphism $\tilde{c}$ from $\frac{S}{\sim} \to Q$ s.t. $\tilde{c} \circ p = c$.  
		\[
		\begin{tikzcd}
			G \arrow[r, "a", shift left] \arrow[r, "b"', shift right] & S \arrow[r, "p"] \arrow[rd, "c"] & \frac{S}{\sim} \arrow[d, "\exists!\tilde{c}", dashed] \\
			&                                  & Q                                                    
		\end{tikzcd}
		\]
		This induces the following diagram of homogeneous maps on the level of vector spaces. Fixing two element $[s], [t]$ in $\frac{S}{\sim}$. Name \textbf{condition 1} to be $p(a(m)) = p(b(m)) = [s]$, $p(a(n)) = p(b(n)) = [t]$ and $m, n \in G$. Likewise, name the \textbf{condition 2} to be $p(s') = [s]$, $p(t') = [t]$ and $s', t' \in S$. The goal is to simplify the writing under the plus sign. There exists a coequalizer on the level of vector spaces. 
		
		\[  
		\begin{tikzcd}
			\bigoplus_{\text{cond}_1}{{}^{m}\overline{C}{}^{n}} \arrow[r, "{(f, a)_\text{res}}", shift left] \arrow[r, "{(g, b)_\text{res}}"', shift right] & \bigoplus_{\text{cond}_2}{{}^{s'}\overline{D}{}^{t'}} \arrow[r, "\tilde{\pi}"] \arrow[rd, "{(h, c)}"'] & {\frac{{}^{[s]}\overline{D}{}^{[t]}}{\im(f - g)_\text{res}}} \arrow[d, "\tilde{h}_{[s], [t]}", dashed]\\
			&                                                                                                                  & {^{c(s)}\overline{B}{}^{c(t)}}                                                      
		\end{tikzcd}
		\]
		Note $\bigoplus_{\text{cond}_2}{{}^{s'}\overline{D}{}^{t'}} = {{}^{[s]}\overline{D}{}^{[t]}}$ as vector spaces (but not colored ones). Thus the natural projection $\tilde{\pi}$ is in fact the restriction of $(\pi, p)$. The dash arrow is unique due to isomorphism theorems of vector spaces; it is defined as $\tilde{h}_{[s], [t]}(\tilde{\pi}(s)) = h(s)$ (remember $\tilde{\pi}$ is surjective). $\im(f - g)_\text{res}$ is the image of $f - g$ that restricts on $\bigoplus_{\text{cond}_1}{{}^{m}\overline{C}{}^{n}}$. Because of previous observations, $\im(f - g)_\text{res}$ is Furthermore, 
		$$\frac{{}^{[s]}\overline{D}{}^{[t]}}{{}^{[s]}\overline{D}{}^{[t]} \cap \im(f - g)}  = \frac{{}^{[s]}\overline{D}{}^{[t]}}{\im(f - g)_\text{res}}$$
		Then glue $\tilde{h}_{[s], [t]}$ for each element $[s], [t] \in \frac{S}{\sim}$; it yields $(\tilde{h}, \tilde{c}) = \bigoplus_{[s], [t] \in \frac{S}{\sim}}\tilde{h}_{[s], [t]}$ as a morphism between colored vector spaces. Since $\im(f - g)$ is an coideal, this morphism is furthermore a morphism between reduced colored coalgebras, which is unique because $\tilde{c}$ is unique on the level of sets and $\tilde{c}$ is unqieu on the level of vector spaces (and respect the bigrading). It shows  $(\bigoplus_{[s], [t] \in \frac{S}{\sim}}\frac{{}^{[s]}\overline{D}{}^{[t]}}{{}^{[s]}\overline{D}{}^{[t]} \cap \im(f - g)}, (\pi, p))$ is the coequalizer between $(f, a)$ and $(g, b)$. \newline

		Next, the idea of constructing the product is the same as the one in theorem 1.1 in \cite{limit}, but there are some modification to the argument: Let $(\overline{C_i}, G_i)_{i \in I}$ be a a family of reduced colored coalgebras and (Lemma \ref{9}) the associated reduced cofree colored coalgebra $\overline{C(\mathbb{G}(\prod_{i \in I}\overline{C_{i}}), \prod_{i\in I}G_i)}$ over the colored vector spaces $\prod_{i \in I}\overline{C_{i}}$ where we forget the coalgebra structure of  $\overline{C_{i}}$. The rest follows \cite{limit}. Let $(D, \prod_{i\in I}G_i)$ be the sum of all subcoalgebras $(E, \prod_{i\in I}G_i)$ of $\overline{C(\mathbb{G}(\prod_{i \in I}\overline{C_{i}}), \prod_{i\in I}G_i)}$ such that $(\tilde{p_i}, \pi_i) \circ (\pi_{\mathbb{G}(\prod_{i \in I}\overline{C_{i}})}, id) \circ (j_E, id)$ where $(j_E, id)$ is the canonical inclusion from $E$ to $\mathbb{G}(\prod_{i \in I}\overline{C_{i}})$. Then one can complete the proof by following the same method in \cite{limit} to check $$((D, \prod_{i\in I}G_i), ((\tilde{p_i}, \pi_i) \circ (\pi_{\mathbb{G}(\prod_{i \in I}\overline{C_{i}})}, id) \circ (j_E, id))_{i \in I})$$ satisfies the universal property and thus the product of the family $I$ in the category of reduced colored coalgebras. 
		
	\end{proof}
	
	Lastly, 
	\begin{corollary}
		The category of pointed coalgebras with a splitting (or equivalently, the category of reduced colored coalgebras) has all small limits and colimits
	\end{corollary}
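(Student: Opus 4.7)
The plan is to invoke the classical criterion that a category admits all small limits (respectively colimits) as soon as it possesses all small products and all equalizers (respectively all small coproducts and all coequalizers); see, e.g., Mac Lane's \emph{Categories for the Working Mathematician}, Chapter V. So the task reduces to assembling the four pieces already obtained in the preceding theorems, and then transporting them across the isomorphism of categories $F$ and $G$ constructed in the previous subsection.

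First, I would record that the earlier theorem supplies small coproducts and equalizers in the category of pointed coalgebras with a splitting, while the immediately preceding theorem supplies small products and coequalizers in the category of reduced colored coalgebras. Since $F$ and $G$ are mutually inverse functors, they preserve and reflect every limit and colimit; in particular they transport products and coequalizers from the reduced colored side over to the pointed-with-splitting side, and coproducts and equalizers in the opposite direction. Hence each of the two (equivalent) categories possesses all four kinds of (co)limits simultaneously.

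Second, I would apply the classical construction: an arbitrary small limit is built from a product indexed by the objects of the diagram together with an equalizer of the two obvious maps into the product indexed by the morphisms of the diagram, and dually for colimits using coproducts and coequalizers. Since the constituent pieces live in the category under consideration, the resulting (co)limit also does, and it is automatically the (co)limit in that category because the universal property is preserved.

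I do not expect any real obstacle here; the argument is purely formal assembly once the equivalence $F \dashv\!\vdash G$ is in place. The only small point worth verifying is that $F$ and $G$ are strict inverses, not merely an equivalence, so the transport of the universal properties is tautological rather than requiring a choice of pseudo-inverse; this was already established in the corollary giving $F \circ G = \Id$ and $G \circ F = \Id$.
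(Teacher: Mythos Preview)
Your proposal is correct and is essentially the argument the paper leaves implicit: the two preceding theorems give coproducts and equalizers on the pointed-with-splitting side and products and coequalizers on the reduced colored side, the isomorphism $F,G$ of categories transports each to the other, and the standard Mac Lane criterion finishes. The paper does not spell this out---the corollary is stated without proof right after the two theorems---so your write-up is exactly the intended formal assembly.
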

	\begin{remarks}
		Our argument also works if the underlying $R$-module is either $\mathbb{N}$-graded or differential $\mathbb{N}$-graded. First, we need to endow $C[G]$ with a trivial graded structure that it lies only in zero degree and trivial differentials. Second, every morphisms considered in the paper should be also homogeneous and commutes with differentials. 
	\end{remarks}

	\bibliography{mainbib}

\providecommand{\bysame}{\leavevmode\hbox to3em{\hrulefill}\thinspace}
\providecommand{\MR}{\relax\ifhmode\unskip\space\fi MR }
\providecommand{\MRhref}[2]{%
  \href{http://www.ams.org/mathscinet-getitem?mr=#1}{#2}
}
\providecommand{\href}[2]{#2}
\begin{thebibliography}{GCKT20b}

\bibitem[Ago11a]{limit2}
A.~L. Agore, \emph{Categorical constructions for hopf algebras}, Communications
  in Algebra \textbf{39} (2011), no.~4, 1476–1481.

\bibitem[Ago11b]{limit}
\bysame, \emph{Limits of coalgebras, bialgebras and hopf algebras}, Proceedings
  of the American Mathematical Society \textbf{139} (2011), no.~03, 855–855.

\bibitem[Chi03]{Chin2003ABI}
William Chin, \emph{A brief introduction to coalgebra representation theory},
  2003.

\bibitem[D97]{d_1997}
Woodcock D, \emph{Some categorical remarks on the representation theory of
  coalgebras}, Communications in Algebra \textbf{25} (1997), no.~9,
  2775–2794.

\bibitem[GCKT20a]{IRA1}
Imma Gálvez-Carrillo, Ralph~M. Kaufmann, and Andrew Tonks, \emph{Three hopf
  algebras from number theory, physics \& topology, and their common background
  i: Operadic \& simplicial aspects}, Communications in Number Theory and
  Physics \textbf{14} (2020), no.~1, 1–90.

\bibitem[GCKT20b]{IRA2}
\bysame, \emph{Three hopf algebras from number theory, physics \& topology, and
  their common background ii: General categorical formulation}, Communications
  in Number Theory and Physics \textbf{14} (2020), no.~1, 91–169.

\bibitem[HL22]{ckd}
J.~Holstein and A.~Lazarev, \emph{Categorical koszul duality}, Advances in
  Mathematics \textbf{409} (2022), 108644.

\bibitem[HS71]{herrlich_strecker_1971}
Horst Herrlich and George~E. Strecker, \emph{Coreflective subcategories},
  Transactions of the American Mathematical Society \textbf{157} (1971),
  205–205.

\bibitem[KM22a]{kaufmann_mo_2022}
Ralph~M. Kaufmann and Yang Mo, \emph{Pathlike co/bialgebras and their antipodes
  with applications to bi- and hopf algebras appearing in topology, number
  theory and physics}, Symmetry, Integrability and Geometry: Methods and
  Applications (2022).

\bibitem[KM22b]{KMonaco22}
Ralph~M. Kaufmann and Michael Monaco, \emph{Plethysm products, element and plus
  constructions}, 2022, Preprint: arXiv:2211.11946.

\bibitem[Lan10]{lane_2010}
Mac~Saunders Lane, \emph{Categories for the working mathematician}, Springer,
  2010.

\bibitem[LV12]{LodayVallette}
Jean-Louis Loday and Bruno Vallette, \emph{Algebraic operads}, Grundlehren der
  Mathematischen Wissenschaften [Fundamental Principles of Mathematical
  Sciences], vol. 346, Springer, Heidelberg, 2012. \MR{2954392}

\bibitem[Mon93]{Susan}
Susan Montgomery, \emph{Hopf algebras and their actions on rings}, CBMS
  Regional Conference Series in Mathematics, vol.~82, Published for the
  Conference Board of the Mathematical Sciences, Washington, DC; by the
  American Mathematical Society, Providence, RI, 1993. \MR{1243637}

\bibitem[MP12]{MayMoreConcise}
J.~P. May and K.~Ponto, \emph{More concise algebraic topology}, Chicago
  Lectures in Mathematics, University of Chicago Press, Chicago, IL, 2012,
  Localization, completion, and model categories. \MR{2884233}

\bibitem[MRZ23]{catcoalg}
Emilio Minichiello, Manuel Rivera, and Mahmoud Zeinalian, \emph{Categorical
  models for path spaces}, Advances in Mathematics \textbf{415} (2023), 108898.

\bibitem[Nic78]{nichols_1978}
Warren~D. Nichols, \emph{Bialgebras of type one*}, Communications in Algebra
  \textbf{6} (1978), no.~15, 1521–1552.

\bibitem[Rad82]{pointed}
David~E Radford, \emph{On the structure of pointed coalgebras}, Journal of
  Algebra \textbf{77} (1982), no.~1, 1–14.

\bibitem[Rad12]{DR2}
David~E. Radford, \emph{Hopf algebras}, Series on Knots and Everything,
  vol.~49, World Scientific Publishing Co. Pte. Ltd., Hackensack, NJ, 2012.
  \MR{2894855}

\bibitem[Swe69]{Sweedler}
Moss~E. Sweedler, \emph{Hopf algebras}, Mathematics Lecture Note Series, W. A.
  Benjamin, Inc., New York, 1969. \MR{0252485}

\bibitem[Tak71]{Takeuchi}
Mitsuhiro Takeuchi, \emph{Free {H}opf algebras generated by coalgebras}, J.
  Math. Soc. Japan \textbf{23} (1971), 561--582. \MR{292876}

\end{thebibliography}
	\bibliographystyle{amsalpha}
	
\end{document}